\newtheorem{theorem}[subsection]{Theorem}
\newtheorem{lemma}[subsection]{Lemma}
\newtheorem{prop}[subsection]{Proposition}
\newtheorem{corollary}[subsection]{Corollary}
\newtheorem{thm}{Theorem}
\theoremstyle{definition}
\newtheorem{remark}[subsection]{Remark}
\newtheorem{example}[subsection]{Example}
\newtheorem*{ack}{Acknowledgments}
\numberwithin{equation}{section}
\newcommand{\X}{{\mathcal X}}
\newcommand{\cH}{{\mathcal H}}
\newcommand{\cG}{{\mathcal G}}
\newcommand{\M}{{\mathcal M}}
\newcommand{\A}{{\mathcal A}}
\newcommand{\OO}{{\mathcal O}}
\newcommand{\W}{{\mathcal W}}
\newcommand{\V}{{\mathcal V}}
\newcommand{\R}{{\mathcal R}}
\newcommand{\TT}{{\mathcal T}}
\renewcommand{\SS}{{\mathcal S}}
\newcommand{\h}{\mathfrak {h}}
\newcommand{\m}{\mathfrak {m}}
\newcommand{\n}{\mathfrak {n}}  
\newcommand{\bb}{\mathfrak {b}}
\newcommand{\s}{\mathfrak {s}}
\renewcommand{\sp}{\mathfrak {sp}}
\newcommand{\T}{\mathbb{T}}
\newcommand{\Z}{\mathbb{Z}}
\newcommand{\Q}{\mathbb{Q}}
\newcommand{\RR}{\mathbb{R}}
\newcommand{\C}{\mathbb{C}}
\newcommand{\PP}{\mathbb{P}}
\newcommand{\FF}{\mathbb{F}}
\newcommand{\bL}{\mathbb{L}}
\newcommand{\bS}{\mathbb{S}}
\newcommand{\G}{\Gamma}
\DeclareMathOperator{\Hom}{Hom}
\DeclareMathOperator{\rank}{rank}
\DeclareMathOperator{\im}{im}
\DeclareMathOperator{\coker}{coker}
\DeclareMathOperator{\spn}{span}
\DeclareMathOperator{\id}{id}
\DeclareMathOperator{\gr}{gr}
\DeclareMathOperator{\ab}{ab}
\DeclareMathOperator{\ann}{ann}
\DeclareMathOperator{\abf}{abf}
\DeclareMathOperator{\ad}{ad}
\DeclareMathOperator{\Zero}{Zero}
\DeclareMathOperator{\Sym}{Sym}
\DeclareMathOperator{\Lie}{Lie}
\DeclareMathOperator{\aut}{Aut}
\DeclareMathOperator{\out}{Out}
\DeclareMathOperator{\orb}{orb}
\DeclareMathOperator{\oder}{ODer}
\newcommand{\surj}{\twoheadrightarrow}
\newcommand{\isom}{\xrightarrow{\,\simeq\,}}
\begin{document}

\title [Arithmetic symmetry and finiteness properties]
{Arithmetic group symmetry and finiteness properties of Torelli groups}

\author[Alexandru Dimca]{Alexandru Dimca$^1$}
\address{Institut Universitaire de France et Laboratoire J.A. Dieudonn\'e, UMR du CNRS 7351,
                 Universit\'e de Nice Sophia-Antipolis,
                 Parc Valrose,
                 06108 Nice Cedex 02,
                 France}
\email{dimca@unice.fr}

\author[Stefan Papadima]{Stefan Papadima$^2$}
\address{Simion Stoilow Institute of Mathematics, 
P.O. Box 1-764,
RO-014700 Bucharest, Romania}
\email{Stefan.Papadima@imar.ro}

\thanks{$^1$ Partially supported by the French-Romanian Programme LEA Math-Mode
and  ANR-08-BLAN-0317-02 (SEDIGA)} 
\thanks{$^2$ Partially supported by the French-Romanian Programme LEA Math-Mode
and PN-II-ID-PCE-2011-3-0288, grant 132/05.10.2011}

\subjclass[2000]{Primary 20J05, 57N05; Secondary 11F23, 20G05.}

\keywords{resonance variety, characteristic variety, Alexander invariant, Alexander polynomial,
Sigma-invariant, arithmetic group, semisimple Lie algebra, finiteness properties,
Torelli group, Johnson kernel, Kahler group}

\begin{abstract}
We examine groups whose resonance varieties, characteristic varieties and Sigma-invariants
have a natural arithmetic group symmetry, and we explore implications on various finiteness properties
of subgroups. We compute resonance varieties, characteristic varieties and Alexander polynomials
of Torelli groups, and we show that all subgroups containing the Johnson kernel have finite 
first Betti number, when the genus is at least $4$. We also prove that, in this range, the
$I$-adic completion of the Alexander invariant is finite-dimensional, and the Kahler property
for the Torelli group implies the finite generation of the Johnson kernel.
\end{abstract}

\maketitle

\tableofcontents

\section{Introduction} \label{sec:intro}

The mapping class group $\G_g$, consisting of the isotopy classes of orientation--preserving
homeomorphisms of a closed, oriented, genus $g$ surface $\Sigma_g$, is an ubiquitous character on 
the mathematical stage. The {\em Torelli} subgroup $T_g$, that is, the kernel of the action on the first $\Z$--homology
of $\Sigma_g$, defined by the exact sequence
\begin{equation}
\label{eq:deftintro}
1\to T_g \longrightarrow \G_g \stackrel{p}{\longrightarrow} Sp_g(\Z)\to 1\, ,
\end{equation}
brings into play the arithmetic integral symplectic group $Sp_g(\Z)$. (Since $T_g$ is trivial for $g\le 1$, 
we will assume $g\ge 2$.) It is a celebrated theorem of Poincar\' e \cite{Po} that $p$ is a surjection. The
{\em Johnson kernel} $K_g$, analyzed by Dennis Johnson in a remarkable series of papers \cite{J0}--\cite{J3},
is the subgroup of $T_g$ generated by Dehn twists about  separating curves on $\Sigma_g$. Our main result in
this note is the following.

\begin{thm}
\label{thm:johnkf}
For $g\ge 4$, the vector space $H_1(K_g, \Q)$ is finite--dimensional.
\end{thm}

The question of whether the first Betti number of $K_g$ is finite has been open for many years, since Johnson's work in the early
80's, despite the attention of many experts. When we started working on this problem, the general consensus seemed to be that 
the answer is probably no; see
for instance Hain's survey \cite{H-sur} on expected infiniteness properties related to Torelli groups. In this direction, Akita's result 
\cite{Ak} guarantees the existence of an infinite Betti number of $K_g$. Biss and Farb tried to prove in \cite{BF} that $K_g$ is not
finitely generated, but a fatal error was found in their paper. As we shall see, Theorem \ref{thm:johnkf} actually holds for
all subgroups of $T_g$ containing $K_g$, which answers Problem 5.3 from Farb's list \cite{F}.

Another important problem concerns the Kahler property: is $T_g$ a Kahler group (that is,
the fundamental group of a compact Kahler manifold)? The answer is negative, in low genus:
$T_2$ is not finitely generated, as proved by McCullough and Miller \cite{MM}, and $T_3$
violates Dennis Sullivan's \cite{Su} $1$-formality property of Kahler groups (established by 
Deligne, Griffiths, Morgan and Sullivan in \cite{DGMS}), as shown by Hain in \cite{H}. Theorem 4.9 from \cite{F} states that
$T_g$ is not a Kahler group. Since the proof assumed infinite generation of $K_g$,
the Kahler group problem is open as well, for $g\ge 4$. Our second main result connects 
the Kahler problem for Torelli groups with the finite generation question for Johnson kernels.

\begin{thm}
\label{thm:kintro}
If $T_g$ is a Kahler group, the Johnson kernel $K_g$ must be finitely generated, for $g\ge 4$.
\end{thm}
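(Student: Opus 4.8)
The plan is to recast finite generation of $K_g$ as a statement about the first Bieri--Neumann--Strebel invariant $\Sigma^1(T_g,\Z)$, and then to determine that invariant completely by combining its arithmetic symmetry (Lemma~\ref{prop:rsim}), Delzant's structure theorem for $\Sigma^1$ of Kahler groups \cite{D}, and the vanishing $\R^1_1(T_g)=\{0\}$ from Theorem~\ref{thm:dintro}\eqref{mi1}. First I would carry out the reduction: since $\RR$ is torsion-free, every homomorphism $T_g\to\RR$ kills $T_g'$ and the torsion of $(T_g)_{\ab}$, hence factors through $(T_g)_{\abf}=T_g/K_g$, so the set of characters vanishing on $K_g$ is all of $H^1(T_g,\RR)\setminus\{0\}$; by the Bieri--Strebel criterion for finite generation of the kernel of an epimorphism onto a finitely generated free abelian group (applicable because $T_g$ is finitely generated for $g\ge4$ by Johnson \cite{J1}), the group $K_g$ is finitely generated precisely when $\Sigma^1(T_g,\Z)=H^1(T_g,\RR)\setminus\{0\}$. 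So, assuming $T_g$ is Kahler, it suffices to show that the complement of $\Sigma^1(T_g,\Z)$ in $H^1(T_g,\RR)\setminus\{0\}$ is empty.

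Write $T_g=\pi_1(X)$ with $X$ compact Kahler. By Delzant's theorem \cite{D}, this complement is a finite union $\bigcup_j(V_j\setminus\{0\})$, where each $V_j=\rho_j^{*}H^1(\pi_1^{\orb}(\Sigma_j);\RR)$ is a rationally defined linear subspace of $H^1(T_g,\RR)$, pulled back along an epimorphism $\rho_j\colon T_g\surj\pi_1^{\orb}(\Sigma_j)$ onto the orbifold fundamental group of a closed hyperbolic $2$-orbifold $\Sigma_j$ (arising from an orbifold fibration $X\to\Sigma_j$). By \eqref{eq:deftintro} and Lemma~\ref{prop:rsim}, $Sp_g(\Z)=\G_g/T_g$ acts on $H^1(T_g,\RR)$ through $Sp_g(\Z)\to\out(T_g)$ and preserves $\Sigma^1(T_g,\Z)$, hence preserves the complex algebraic subset $W:=\bigcup_j(V_j\otimes\C)$ of $H^1(T_g,\C)$. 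Since $Sp_g(\Z)$ is a lattice in $Sp_g(\RR)$, it is Zariski dense in $Sp_g(\C)$ (Borel density), so $W$ is $Sp_g(\C)$-invariant; as $Sp_g(\C)$ is connected it fixes each irreducible component of $W$; and by Johnson \cite{J0,J3} the $Sp_g(\C)$-module $H^1(T_g,\C)\cong\bigl((T_g)_{\abf}\otimes\C\bigr)^{*}$ is irreducible. Therefore every component of $W$ equals $\{0\}$ or all of $H^1(T_g,\C)$, and the theorem reduces to excluding the second alternative.

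This exclusion is the crux, and the step I expect to be the main obstacle. Suppose $V_j\otimes\C=H^1(T_g,\C)$ for some $j$. Surjectivity of $\rho_j$ makes $\rho_j^{*}$ injective in degree $1$, so it is an isomorphism $H^1(\pi_1^{\orb}(\Sigma_j);\C)\isom H^1(T_g,\C)$; in particular $b_1(T_g)=b_1(\pi_1^{\orb}(\Sigma_j))=2h$, where $h$ is the underlying genus of $\Sigma_j$. If $h\le1$, this forces $b_1(T_g)\le2$, contradicting $b_1(T_g)=\binom{2g}{3}-2g\ge48$ for $g\ge4$. If $h\ge2$, I would invoke that a closed hyperbolic $2$-orbifold of underlying genus $h$ has the same rational cohomology ring through degree $2$ as the closed surface $\Sigma_h$ (the cone points contribute only torsion), together with $\R^1_1(\Sigma_h)=H^1(\Sigma_h;\C)$ for $h\ge2$ — for any nonzero class $a$ the kernel of $a\cup-\colon H^1\to H^2$ has dimension $2h-1\ge3$, hence is strictly larger than $\C a$; since $\R^1_1$ is monotone along a cohomology ring map injective in degree $1$, the isomorphism $\rho_j^{*}$ would then yield $\R^1_1(T_g)=H^1(T_g,\C)\ne\{0\}$, contradicting Theorem~\ref{thm:dintro}\eqref{mi1} (this is where $g\ge4$ is genuinely used, since the resonance vanishing fails at $g=3$). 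In all cases $V_j\otimes\C\ne H^1(T_g,\C)$, so every component of $W$ is $\{0\}$, whence $\Sigma^1(T_g,\Z)=H^1(T_g,\RR)\setminus\{0\}$ and $K_g$ is finitely generated. Besides this exclusion of a ``full'' orbifold pencil, the points requiring care are the exact content of Delzant's theorem (e.g.\ that genus-$0$ orbifold bases contribute the empty subsphere, since then $b_1(\pi_1^{\orb})=0$) and the precise form of the Bieri--Strebel finiteness criterion invoked in the reduction.
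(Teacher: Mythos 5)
Your proposal is correct and takes essentially the same route as the paper's own proof (Proposition~\ref{cor:bnsa}\eqref{r2} combined with Corollary~\ref{cor:ftor}): the Bieri--Strebel criterion~\eqref{eq:ftest} reduces finite generation of $K_g$ to $\Sigma^1(T_g,\Z)=H^1(T_g,\RR)\setminus\{0\}$, Delzant's theorem expresses the complement of $\Sigma^1$ as a finite union of rational linear subspaces, the $Sp_g(\Z)$-symmetry from Lemma~\ref{prop:rsim} together with $Sp_g(\C)$-irreducibility of $H^1(T_g,\C)$ forces each such subspace to be $\{0\}$ or all of $H^1$, and the resonance vanishing $\R^1_1(T_g)=\{0\}$ from Theorem~\ref{thm:rest} excludes the full case. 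The only distinction is organizational: you merge the arithmetic dichotomy (the content of Proposition~\ref{cor:bnsa}, which the paper isolates as a general statement) with the Delzant-plus-resonance exclusion into a single pass, and you replace the paper's finite-index-isotropy-subgroup argument with the equivalent observation that connectedness of $Sp_g(\C)$ forces it to stabilize each irreducible component.
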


\subsection{Finiteness properties of the Johnson filtration} \label{ssi1}

The group $\G_g$ may be viewed from many angles. By a result of Earle and Eells, the Eilenberg--MacLane space
$K(\G_g, 1)$ classifies oriented $\Sigma_g$--bundles. Via Heegaard splittings, $\G_g$ is intimately related with the
world of $3$--manifolds. The group $\G_g$ is the orbifold fundamental group of the (aspherical) moduli space of
genus $g$ compact Riemann surfaces, $\M_g$: $\G_g$ acts properly discontinuously, with finite isotropy, on the 
(contractible) Teichm\" uller space $\X_g$,  with quotient $\X_g /\G_g =\M_g$. Furthermore, the so-called 
(finite index, normal) level subgroup $\G_g (\ell)$ is torsion--free, and the quotient $\X_g /\G_g (\ell)$ is the
(quasi--projective) moduli space of complex curves with level $\ell$ structure, for $\ell \ge 3$. So, $\G_g$
enjoys almost every conceivable finiteness property: it is finitely presentable, it has finite virtual cohomological
dimension, and all its homology groups are finitely generated. However, little is known about finiteness properties of
infinite index subgroups, such as $T_g$.

We are going to adopt a group--theoretical viewpoint on $\G_g$, that emerges from Nielsen theory: 
$\G_g =\out^{+} (\pi_g)$, the proper outer group of automorphisms of the group $\pi_g= \pi_1(\Sigma_g)$. 
In this way, we may identify both $T_g$ and $K_g$ with the first terms in the so-called {\em Johnson filtration}
of $\Gamma_g$. This filtration was first introduced and analyzed for $\out (\FF_n)$, by Andreadakis \cite{And},  
where $\FF_n$ is the free group on $n$ generators.  

The construction goes as follows. Given a group $\pi$, denote by $\pi^s$ the subgroup generated by length 
$s$ commutators. (Here, $\pi^1=\pi$, $\pi^2$ is the derived subgroup $\pi'$, and so on.) Let 
$\gr_{\bullet} \pi= \oplus_{s\ge 1} \pi^s/\pi^{s+1}$ be the associated graded Lie algebra. The Johnson filtration 
$\{ J^s (\pi)\}_{s\ge 0}$ is the normal, descending series defined by
\begin{equation}
\label{eq=defjfilt}
J^s (\pi)= \ker \{ \out (\pi) \rightarrow \out (\pi/\pi^{s+1}) \}
\end{equation}
(with the convention that $J^0 (\pi_g)= \out^+ (\pi_g) $). 

We infer from \eqref{eq:deftintro} that $J^1 (\pi_g)=T_g$. By Teichm\" uller theory, the complex manifold 
$\X_g/T_g =K(T_g, 1)$ is the moduli space of curves with a marked symplectic basis of $H_1(\Sigma_g, \Z)$. 
Nevertheless, the (higher) finiteness properties of $T_g$ are largely a mystery. A deep theorem of Johnson \cite{J1} 
says that $T_g$ is finitely generated for $g\ge 3$, but finite presentability is open. Another deep result, due to
Bestvina, Bux and Margalit \cite{BBM}, is that $H_{3g-5}(T_g, \Z)$ is infinitely generated, for $g\ge 2$. 

A key tool in the investigation of the Johnson filtration is provided by the {\em Johnson homomorphisms},
\begin{equation}
\label{eq=defjhom}
\{ \tau_s : J^s(\pi) \longrightarrow \oder^s (\gr_{\bullet} \pi) \}_{s\ge 1} \, .
\end{equation}
They are induced by the map that associates to $\varphi \in J^s (\pi)$ and $x\in \pi^t$ the class modulo $\pi^{s+t+1}$ 
of $\varphi (x)\cdot x^{-1}$. They take values in the {\em outer derivations} of the Lie algebra $\gr_{\bullet} \pi$ that
raise degree by $s$. The construction works well for a class of groups that includes $\pi=\pi_g$ for $g\ge 2$ and 
$\pi= \FF_n$ for $n\ge 2$; see \cite{PS10} for full details.

By construction, $J^{s+1}(\pi)=\ker (\tau_s)$. Another useful property is related to the natural {\em symmetry group},
$J^0(\pi)/J^1(\pi)$,  that acts by conjugation on $\oder^s (\gr_{\bullet} \pi)$. When $\pi=\pi_g$ (respectively $\pi=\FF_n$),
this group is $Sp_g(\Z)$ (respectively $GL_n(\Z)$). The presence of this arithmetic group symmetry will be very important later on.
In particular, it opens the way for the use of classical representation theory.

When $\pi=\pi_g$ and $g\ge 3$, $\tau_1$ is Johnson's homomorphism constructed in \cite{J0}, 
$\tau_1 : T_g \rightarrow \bigwedge^3 H_1(\Sigma_g, \Z)/H_1(\Sigma_g, \Z)$. By Johnson's fundamental result from \cite{J3},
$\tau_1$ induces an isomorphism, $(T_g)_{\abf} \stackrel{\simeq}{\rightarrow} \bigwedge^3 H_1(\Sigma_g, \Z)/H_1(\Sigma_g, \Z)$;
here, $T_{\abf}$ denotes the maximal torsion-free abelian quotient of a group $T$. Moreover, the natural action of $Sp_g (\Z)$ on
the target of $\tau_1$ extends to a rational, irreducible representation of the linear algebraic group $Sp_g(\C)$ on the complexification.

In his landmark paper \cite{J2}, Johnson showed that $\ker (\tau_1)=J^2(\pi_g)$ coincides with $K_g$. The Johnson kernel $K_g$ 
has since proved quite important in many studies. For example, it plays a key role in Morita's work on the Casson invariant \cite{Mo}.
Other equivalent definitions of the Johnson homomorphism $\tau_1$ for $\pi_g$, involving the cohomology rings of mapping tori, and 
the period map from Teichm\" uller theory, were proposed by Johnson in \cite{J4} (see also Hain \cite{H-msri} for complete proofs).
They led to alternative descriptions of $K_g$, and opened new perspectives in approaching mapping class groups and related moduli spaces.
Despite this, the basic finiteness questions on $K_g$ remained unanswered. 

\subsection{Strategy of proof} \label{ssi2}

Guided by the above results on $T_g$, we are going to define the Johnson kernel of a group $T$, denoted $K_T$, to be the kernel of 
the canonical projection, $T\surj T_{\abf}$. Assuming from now on $g\ge 3$, we recall that $K_{T_g}=K_g$. Since $T_g$ is finitely 
generated, work of Dwyer and Fried \cite{DF} (as refined in \cite{PS10}) on finiteness properties of Betti numbers in abelian covers implies that
$H_1(K_g, \Q)$ is finite dimensional if and only if $\V (T_g)$ is finite. Here, $\V (T_g)$ is a certain algebraic subvariety of the affine,
connected torus $\T^0 (T_g)=  \Hom ((T_g)_{\abf}, \C^*)$, called the {\em restricted characteristic variety} of $T_g$.

Our first step involves the geometry and the symmetry of $\V (T_g)$. The arithmetic group $Sp_g(\Z)$ naturally acts on $\T^0 (T_g)$
and preserves $\V (T_g)$. We use a powerful result in number theory due to Laurent \cite{Lau}, to show that either  $\V (T_g)$
is finite, or $\V (T_g)= \T^0 (T_g)$. This is reminiscent of a foundational result in algebraic geometry due to Arapura, who determined
in \cite{A} the qualitative structure of characteristic varieties associated to fundamental groups of quasi--Kahler manifolds; this makes the
Kahler problem for $T_g$ even more inciting.

Another important result on the geometry of characteristic varieties was obtained by Libgober \cite{Lib} (and refined in 
Lemma \ref{lem:tgcone}). It implies that the tangent cone at the unit $1\in \T^0 (T_g)$ of $\V (T_g)$ is contained in the so--called
{\em resonance variety} $\R (T_g)$. The resonance variety is an algebraic subvariety sitting inside the Lie algebra of the algebraic group
$\T^0 (T_g)$, invariant under the $Sp_g(\Z)$--action. 

Our second step is the analysis of  the geometry and symmetry of $\R (T_g)$. It is known that resonance varieties are closely connected to
associated graded Lie algebras. A deep result of Hain \cite{H} gives a presentation for the Lie algebra $\gr_{\bullet} (T_g)\otimes \C$. 
We combine in Theorem \ref{thm:rest} Hain's work with classical methods from the representation theory of $Sp_g(\C)$, to arrive at
the following conclusion:  $\R (T_3) = \Lie (\T^0 (T_3)) \ne \{0\}$, and $\R (T_g)= \{ 0\}$, for $g\ge 4$.

When $g\ge 4$, this implies that $\V (T_g) \ne \T^0 (T_g)$. Hence, $\V (T_g)$ must be finite, and $H_1(K_g, \Q)$ must be
finite dimensional, which proves Theorem \ref{thm:johnkf}. 

In Theorem \ref{thm:kintro}, we follow a similar approach. The {\em BNSR invariant}  of the
finitely generated group $T_g$, denoted $\Sigma (T_g)$, is a subset of $\Hom ((T_g)_{\abf}, \RR) \setminus \{0\}$. A
powerful result in geometric group theory, due to Bieri, Neumann and Strebel \cite{BNS}, implies that $K_g$ is
finitely generated if and only if $\Sigma (T_g)= \Hom ((T_g)_{\abf}, \RR) \setminus \{0\}$. Again, $Sp_g(\Z)$ 
naturally acts on $\Hom ((T_g)_{\abf}, \RR)$, preserving $\Sigma (T_g)$. 

When $T=\pi_1(M)$ is the fundamental group of a compact Kahler manifold, a key theorem of Delzant \cite{D}
describes $\Sigma (T)$ in terms of the geometry of holomorphic pencils on $M$. We deduce Theorem \ref{thm:kintro}
from the vanishing of $\R (T_g)$, by using Delzant's result, together with the $Sp_g(\Z)$--symmetry of $\Sigma (T_g)$.

The techniques developed in this paper lead to a surprising answer to another conjecture,
concerning the subgroups of the Johnson filtration associated to automorphism groups of
free groups. See \cite{PS10}.

\section{Main general results} \label{sec:abs}

As explained in the introduction, we adopt the group--theoretic viewpoint on $\Gamma_g$, $T_g$ and $K_g$.
This approach leads us to various other general results, that seem to be of independent interest. 
In this section, we discuss this more general viewpoint.

The natural $\aut (T)$-symmetry factors through $\out (T)$, for a good number of invariants of
a group $T$. The simplest examples are the abelianization, $T_{\ab}$, and the quotient of $T_{\ab}$
by its torsion, $T_{\abf}$. More well-known examples are provided by the (complex) cohomology ring
$H^{\bullet} T:= H^{\bullet}(T, \C)$ and the (complexified) graded Lie algebra associated to the lower central series,
$\cG_{\bullet}(T):= \gr_{\bullet}T \otimes \C$.

Our starting point  is to consider a group epimorphism, $p:\G \surj D$, 
with finitely generated kernel $T$. This is motivated by the defining exact sequence \eqref{eq:deftintro} of Torelli groups.
We examine three other known types of invariants with
natural outer symmetry, through the prism of the $D$-symmetry induced by the canonical homomorphism,
$D\to \out (T)$. Firstly, we look at the resonance varieties $\R^i_k(T)$ 
(i.e., the jump loci for a certain kind of
homology, associated to the ring $H^{\bullet} T$), sitting inside $H^1T$; they are reviewed in 
Section \ref{sec:res}, and their outer symmetry is discussed in Remark \ref{rk:dhres}.
For our purposes here, the most important resonance variety will be $\R (T):= \R^1_1(T)$, which is 
Zariski closed in $H^1 T$.  Secondly,
we inspect the characteristic varieties $\V^i_k(T)$ 
(i.e., the jump loci for homology with rank one complex local systems), lying inside the character torus
$\T (T):= \Hom (T_{\ab}, \C^*)$, and their intersection with the connected component of $1\in \T (T)$ of the
affine group $\T (T)$, $\T^0 (T):= \Hom (T_{\abf}, \C^*)$;
their definition is recalled in Section \ref{sec:dsim}, and their outer symmetry is explained in 
Lemma \ref{prop:dsim}. The restricted characteristic variety $\V (T):= \V^1_1(T) \cap \T^0 (T)$ is
Zariski closed in $\T^0 (T)$. 
Finally, we recollect in Section \ref{sec:rsigma} a couple of relevant facts about the
Bieri-Neumann-Strebel-Renz (BNSR) invariants, $\Sigma^q(T, \Z)\subseteq H^1(T, \RR)\setminus \{ 0\}$,
and we point out their outer symmetry in Lemma \ref{prop:rsim}. We will be particularly interested in 
$\Sigma (T):= \Sigma^1 (T, \Z)$. In particular, the resonance and characteristic varieties of $T_g$, 
as well as its BNSR--invariants, 
acquire a natural $Sp_g(\Z)$-symmetry, for $g\ge 3$. 

Our choice for the types of invariants was dictated by the fact that each of them controls a certain kind of
finiteness properties. We begin with resonance, for which this relationship seems to be new.
We devote Section \ref{sec:resfin} to the analysis of the connexions between triviality of resonance and
finiteness of various (completed) Alexander-type invariants, for finitely generated groups.

Let $K\subseteq T$ be a subgroup containing the derived group $T'$. Then $H_1K:= H_1(K, \C)$
becomes in a natural way a module over the group ring $\C T_{\ab}$, with module structure induced
by $T$-conjugation.
Its $I$-adic completion is denoted
$\widehat{H_1K}$, where $I \subseteq \C T_{\ab}$ is the augmentation ideal. When $K=T'$, 
$H_1K$ is the classical Alexander invariant from link theory (over $\C$). 

The technique of
$I$-adic completion was promoted in low-dimensional topology by Massey \cite{Mas}. A key result related to  
$I$-adic completion was obtained by Hain in \cite{H}, where he shows that $T_g$ is a $1$-formal group in the sense
of Sullivan \cite{Su}, for $g\ge 6$. Here is our first main abstract result. By Theorem \ref{thm:rest}, the Part \eqref{ai2} below
applies to $T=T_g$, for $g\ge 4$. 

\begin{thm} 
\label{thm:aintro}
Let $T$ be a finitely generated group.
\begin{enumerate}
\item \label{ai1}
Assume $T$ is $1$-formal. Then $\R^1_1(T) \subseteq \{ 0\}$ if and only if 
$\dim_{\C} \widehat{H_1T'}< \infty$.
\item \label{ai2}
For any subgroup $K\subseteq T$ containing $T'$, 
$\dim_{\C} \widehat{H_1K}< \infty$, if $\R^1_1(T) \subseteq \{ 0\}$.
\end{enumerate}
\end{thm}

Aiming at finer finiteness properties, we go on by examining characteristic varieties, in 
Section \ref{sec:dsim}. Here, we start from a basic result of Dwyer and Fried \cite{DF}, as refined
in \cite{PS10}. It says that the finiteness of Betti numbers, up to degree $q$, of normal subgroups with 
abelian quotient is detected precisely by the finiteness of the intersection between
characteristic varieties of type $\V^i_1$, for $i\le q$, and the corresponding subtorus of $\T$.

Given the $D$-symmetry of characteristic varieties, we are thus led to consider the following
context, inspired by Torelli groups. 
Let $L$ be a $D$-module which is finitely generated and free as an abelian group. Assume that $D$ is an
arithmetic subgroup of a simple $\C$-linear algebraic group $S$ defined over $\Q$, with
$\Q-\rank (S)\ge 1$. Suppose also that the $D$-action on $L$ extends to an irreducible, 
rational $S$-representation in $L\otimes \C$. (Note that the above assumptions are satisfied for $g\ge 3$
by $D=Sp_g(\Z) \subseteq Sp_g(\C)=S$ and $L=(T_g)_{\abf}$, due to Johnson's pioneering results on the
symplectic symmetry of Torelli groups from \cite{J0, J3}.)
The $D$-representation in $L$ gives rise to a natural $D$-action on the connected affine torus
$\T (L):= \Hom (L, \C^*)$. 

\begin{thm}
\label{thm:bintro}
If the $D$-module $L$ satisfies the above assumptions, then $\T (L)$ is geometrically $D$-irreducible,
that is, the only $D$-invariant, Zariski closed subsets of $\T (L)$ are either equal to $\T (L)$,
or finite.
\end{thm}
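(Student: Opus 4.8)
The plan is to argue by contradiction: suppose $Z \subseteq \T(L)$ is a $D$-invariant Zariski closed subset that is neither all of $\T(L)$ nor finite, and derive a contradiction from the irreducibility of the $S$-representation on $L \otimes \C$. The first step is to pass from subvarieties of the torus to subspaces on the Lie algebra / character lattice side. Writing $\T(L) = \Hom(L, \C^*)$, a character of $L$ is recorded by its coordinate vector in $L^{\vee} \otimes \C$, so that the "exponential" map $L^{\vee}\otimes \C \to \T(L)$ is a surjective homomorphism of algebraic groups with kernel $2\pi i\, L^{\vee}$; its source is the universal cover. Any $D$-invariant Zariski closed $Z$ of positive dimension has a positive-dimensional irreducible component $Z_0$, and (after translating by a suitable torsion point, using that $D$ permutes the finitely many components and finitely many torsion translates within a fixed component) one may reduce to the case where $Z_0$ is an irreducible subgroup, or at least where its tangent space at the identity $V := T_1 Z_0 \subseteq L^{\vee}\otimes \C$ is a nonzero, proper, $D$-invariant subspace.

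Next I would promote this $D$-invariant subspace to an $S$-invariant subspace. Since $D$ is Zariski dense in $S$ (this is the standard Borel density theorem applied to an arithmetic subgroup of a simple algebraic group of positive $\Q$-rank, which is therefore not cocompact — or simply: a $\Q$-rank at least one semisimple arithmetic group is Zariski dense in its ambient group), any $D$-stable Zariski closed condition is $S$-stable. The subspace $V \subseteq L^{\vee}\otimes \C$ is cut out by linear equations, hence is Zariski closed in the vector group, and its stabilizer in $\GL(L^{\vee}\otimes\C)$ is Zariski closed; as it contains the Zariski dense subgroup $D$, it contains $S$. Therefore $V$ is an $S$-subrepresentation of $L^{\vee}\otimes \C$. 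But $L \otimes \C$ is an irreducible rational $S$-representation by hypothesis, hence so is its dual $L^{\vee}\otimes\C$, so the only $S$-invariant subspaces are $0$ and the whole space — contradicting $0 \neq V \subsetneq L^{\vee}\otimes\C$.

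The one genuinely delicate point, and the main obstacle, is the reduction in the first step from "arbitrary $D$-invariant Zariski closed $Z$" to "$D$-invariant subspace $V$". A torus has torsion points and its subvarieties need not be subgroups, so $Z_0$ need not pass through $1$ and $T_1 Z_0$ need not be defined as stated. The clean way around this is: the connected component of the identity of the Zariski closure of the subgroup generated by $Z$ is a $D$-invariant subtorus $\T' \subseteq \T(L)$; if $Z$ is infinite then $\dim \T' \ge 1$, and if $Z \ne \T(L)$ then by a theorem of Manin–Mumford type combined with $D$-invariance (or, more elementarily, because $D$ has no finite orbits on an infinite subtorus — this itself follows from the irreducibility argument applied to the smaller torus), one forces $\T' \subsetneq \T(L)$. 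Then $\Lie \T' \subseteq L^{\vee}\otimes\C$ is the desired nonzero proper $D$-invariant subspace. One should check the edge case where the only proper $D$-invariant subtori are trivial, which is exactly what the $S$-irreducibility delivers, closing the loop; care is needed to present this without circularity, so I would phrase it as: the Lie algebra of any $D$-invariant subtorus is a $D$-invariant, hence ($D$ Zariski dense) $S$-invariant, subspace of $L^{\vee}\otimes\C$, so is $0$ or everything — hence every $D$-invariant Zariski closed set is either finite (contained in the torsion points, of which $D$ can fix only finitely many by the same density argument on finite quotients) or all of $\T(L)$.
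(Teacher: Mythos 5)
There is a genuine gap, and it is exactly at the point you flag as ``the one genuinely delicate point.'' Your proposed workaround does not close it. You suggest replacing $Z$ by the identity component $\T'$ of the Zariski closure of the subgroup generated by $Z$, and you assert that $Z\ne \T(L)$ forces $\T'\subsetneq \T(L)$ ``by a theorem of Manin--Mumford type combined with $D$-invariance.'' That implication is simply false: the subgroup generated by a proper, infinite Zariski-closed subset of a torus is very often all of the torus (already for a generic curve in $(\C^*)^2$), so $\T'=\T(L)$ and $\Lie \T'$ is not a proper subspace. Nothing in $D$-invariance prevents this. Likewise, the claim that ``$D$ has no finite orbits on an infinite subtorus'' is incorrect as stated (torsion points always give finite orbits), and the appeal to ``the same argument on the smaller torus'' is the circularity you were worried about; it is not resolved. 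Your closing paragraph then quietly upgrades ``every $D$-invariant subtorus is $0$ or everything'' to ``every $D$-invariant Zariski-closed set is finite or everything,'' but there is no bridge between these two statements: a $D$-invariant closed set need not be a subtorus, a translate of one, or even a finite union of such.

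What is actually needed --- and what the paper uses --- is a deep diophantine input: Laurent's theorem (the toric case of Lang's conjecture, also known as Mordell--Lang for tori). The paper's proof runs in two steps. First, for $t\in W$, the group $\OO_t$ generated by the orbit $D\cdot t$ is a finitely generated subgroup of $\T(L)$; Laurent's theorem says $W\cap \OO_t$ is contained in a \emph{finite} union of translated subgroups $\gamma\cdot\bS\subseteq W$. That finite union is $D$-invariant, and for a $D$-invariant \emph{union of translated tori} the subspace argument you envisage does apply (this is the paper's preliminary Lemma on unions of translated tori: the direction $\bS$ of each component is stabilized by a finite-index subgroup, so $T_1\bS$ is $D_1$-invariant, hence by Zariski density $S$-invariant, hence zero). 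This forces $D\cdot t$ to be finite for every $t\in W$. Only then does one pick a smooth point $t\in W$ with $0\ne T_tW\ne T_t\T(L)$: because the isotropy $D_t$ now has finite index, $D_t$ genuinely fixes $t$, so $T_tW$ translated to the origin is a $D_t$-invariant proper nonzero subspace of $L^\vee\otimes\C$, and Zariski density of the finite-index subgroup $D_t$ plus irreducibility of $L\otimes\C$ gives the contradiction. In short: your final Zariski-density-plus-irreducibility step is the right punchline, but you cannot get a $D$-invariant (or even finite-index-$D$-invariant) subspace out of $W$ without first knowing that $D$-orbits in $W$ are finite, and that finiteness is precisely what Laurent's theorem supplies.
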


We deduce Theorem \ref{thm:bintro} from a deep result in diophantine geometry, due to
M. Laurent \cite{Lau}, in Section \ref{sec:dsim}. Note that the conclusion of our theorem above
is in marked contrast with the behavior of the induced $D$-representation in the affine space
$L\otimes \C$, for which $S$-invariant, infinite and proper Zariski closed subsets may well exist.
Theorem \ref{thm:bintro} enables us to obtain in Section \ref{sec:dsim} the following consequences
of the triviality of resonance. In the particular context of mapping class groups, we recover from Part \eqref{it}
below Theorem \ref{thm:johnkf}, in slightly stronger form. 

\begin{thm}
\label{thm:cintro}
Let $p:\G \surj D$ be a group epimorphism with finitely generated kernel $T$, having the property that
$\R^1_1(T)\subseteq \{ 0\}$. Assume $D\subseteq S$ is arithmetic, where the
$\C$-linear algebraic group $S$ is defined over $\Q$, simple, with
$\Q-\rank (S)\ge 1$. Suppose moreover that the canonical $D$-representation in $T_{\abf}$ 
extends to an irreducible, 
rational $S$-representation in $T_{\abf}\otimes \C$. The following hold.
\begin{enumerate}
\item \label{vt}
The (restricted) characteristic varieties $\V^1_k (T) \cap \T^0 (T)$ are finite, for $k\ge 1$.
\item \label{at}
If moreover $b_1(T)>1$, the Alexander polynomial $\Delta^T$ is a non-zero constant,
modulo the units of the group ring $\Z T_{\abf}$.
\item \label{it}
For any subgroup $N\subseteq T$, containing the kernel of the canonical map,
$T\surj T_{\abf}$, the first Betti number $b_1(N)$ is finite.
\end{enumerate}
\end{thm}

Note that the computation of characteristic varieties and Alexander polynomials can be 
a very difficult task, in general. What makes life easier in Theorem \ref{thm:cintro},
Parts \eqref{vt} and \eqref{at}, is the arithmetic symmetry. These two results hold in particular for $T=T_g$,
when $g\ge 4$. As we pointed out earlier, in this case we may infer from Theorem \ref{thm:aintro}\eqref{ai2}
that $\dim_{\C} \widehat{H_1 T_g'}<\infty$. 
Note that the finite-dimensionality of the (uncompleted) Alexander invariant $H_1(T_g', \C)$
cannot be deduced from Theorem \ref{thm:cintro}\eqref{vt}, since $(T_g)_{\ab}$ contains non-trivial
$2$-torsion, according to Johnson \cite{J3}. To the best of our knowledge, the finiteness of
$b_1(T_g')$ is an open question.

Theorem \ref{thm:cintro} may also be used in the particular case when $T=J^1(\FF_n)$, with $n\ge 4$,
and leads to the conclusion that the first Betti number of $J^2(\FF_n)$ is finite; see \cite{PS10}.

We investigate in Section \ref{sec:rsigma} the BNSR invariants, 
which control geometric finiteness properties of 
normal subgroups with abelian quotient \cite{BNS, BR}. In the context from \eqref{eq:deftintro}, we use 
their arithmetic symmetry to prove Theorem \ref{thm:kintro}.

\section{Associated graded Lie algebra} \label{sec:csim}

The symplectic symmetry is well-known to be an important tool for the study of Torelli
groups. In this section, we recall a basic result of Hain \cite{H}, related
to the symplectic symmetry at the level of the associated graded Lie algebra. 
Hain's starting point is  Johnson's pioneering work, which we review first.

Let $\Sigma_g$ be a closed, oriented, genus $g$ surface.  In the sequel we will assume that $g\ge 3$. 
In this range, the group $T_g$ is finitely generated. For a group $T$, we denote by
$T_{\abf}$ the quotient of its abelianization $T_{\ab}$ by the torsion subgroup. Among other things,
Johnson \cite{J0, J3} gave a very convenient description of $(T_g)_{\abf}$, in the following way.
Fix a symplectic basis of $H:= H_1(\Sigma_g, \Z)$, $\{ a_1,\dots, a_g,b_1,\dots, b_g \}$, and denote by 
$\omega= \sum_{i=1}^g a_i \wedge b_i \in \bigwedge^2 H$ the symplectic form. 
Let $Sp_g(\Z)$ be the group of symplectic automorphisms of $H$. Note that the
$Sp_g(\Z)$-action on $H$ canonically extends to a $Sp_g(\Z)$-action on the exterior algebra
$\bigwedge^* H$, by graded algebra automorphisms. Consider the $Sp_g(\Z)$-equivariant
embedding, $H\hookrightarrow \bigwedge^3 H$, given by $h\in H\mapsto h\wedge \omega \in \bigwedge^3 H$, 
and denote by $L$ the $Sp_g(\Z)$-module $\bigwedge^3 H/H$. Johnson's homomorphism
constructed in \cite{J0},  $\tau_1 \colon T_g \to L$, has the following properties.

\begin{theorem}[Johnson]
\label{thm:john}
The group homomorphism $\tau_1$ is $\G_g$-equivariant, with respect to the (left) conjugation action
on $T_g$ induced by \eqref{eq:deftintro}, and the restriction of the $Sp_g(\Z)$-action on $L$ via $p$.
It induces a $Sp_g(\Z)$-equivariant isomorphism, $\tau_1 \colon (T_g)_{\abf} \isom L$.
\end{theorem}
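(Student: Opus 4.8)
The plan is to recall Johnson's construction of the homomorphism $J\colon T_g\to L$ precisely enough to check the two equivariance statements, and then to cite the hard content — that $J$ is onto after abelianization and has kernel exactly $(T_g)'\cdot\tors$ — from Johnson's papers \cite{J0, J3}. First I would recall that $J$ is defined via the action of a Torelli mapping class on $\pi_1(\Sigma_g)/\Gamma_3$, where $\Gamma_k$ denotes the lower central series: since an element $\phi\in T_g$ acts trivially on $H=\pi/\Gamma_2$, it determines a ``twisting'' crossed homomorphism $\pi/\Gamma_2\to\Gamma_2/\Gamma_3$, i.e. an element of $\Hom(H,\bigwedge^2 H/\langle\omega\rangle)\cong H\otimes(\bigwedge^2 H/\langle\omega\rangle)$ after using the symplectic self-duality $H\cong H^*$; Johnson identifies the image of this map with the submodule $L=\bigwedge^3 H/H\subseteq H\otimes(\bigwedge^2 H/\langle\omega\rangle)$ (the contraction-free part), giving $J\colon T_g\to L$.

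For $\G_g$-equivariance: an element $\psi\in\G_g$ acts on $T_g$ by conjugation and on $\pi_1(\Sigma_g)$ (up to inner automorphism) by an automorphism lifting its action on $H$; writing out the cocycle $c_{\psi\phi\psi^{-1}}$ in terms of $c_\phi$ and the induced map on $\Gamma_2/\Gamma_3\cong\bigwedge^2 H/\langle\omega\rangle$ gives exactly $J(\psi\phi\psi^{-1})=p(\psi)\cdot J(\phi)$, where $p(\psi)\in Sp_g(\Z)$ acts on $L$ through its extension to $\bigwedge^3 H/H$. This is the routine ``a twisting class transforms by the ambient symplectic action'' computation; it also shows $J$ factors through $(T_g)_{\ab}$ (since $T_g$ acts trivially on $H$, hence trivially on $L$ through $p$, so $J$ kills commutators), and that the induced map $(T_g)_{\ab}\to L$ is $Sp_g(\Z)$-equivariant. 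Since $L$ is torsion-free, $J$ further factors through $(T_g)_{\abf}$.

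It then remains to show the induced map $(T_g)_{\abf}\to L$ is an isomorphism. Surjectivity is Johnson's theorem from \cite{J0}: he exhibits explicit Torelli mapping classes (bounding pair maps) whose $J$-images generate $L$ as an abelian group, and one checks these images span an $Sp_g(\Z)$-submodule equal to all of $L$ using that $L$ is an irreducible $Sp_g(\C)$-module for $g\ge 3$ and $L\otimes\C$ is generated by highest-weight vectors realized by bounding pairs. Injectivity — equivalently, that $\ker J$ modulo torsion is trivial, i.e. $H_1(T_g;\Q)\cong L\otimes\Q$ with no extra summand — is the deep computation of \cite{J3}, where Johnson determines $(T_g)_{\ab}$ completely (it is $L$ together with an explicit $2$-torsion part coming from the Birman–Craggs–Johnson homomorphisms). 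The main obstacle is precisely this last input: the surjectivity and the exact determination of the abelianization are substantial theorems of Johnson that we do not reprove here; the contribution of this section is to package them in the stated $\G_g$-/$Sp_g(\Z)$-equivariant form, which is what the later sections need.
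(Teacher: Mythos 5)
The paper does not prove this statement; it is presented as a cited theorem of Johnson, with the substance coming from \cite{J0, J3}. Your proposal correctly recognizes this, sketches Johnson's construction of $J$ via the action on $\pi_1(\Sigma_g)/\Gamma_3$, observes that $\G_g$-equivariance (and hence factoring through $(T_g)_{\abf}$) is a routine cocycle computation, and appropriately defers the surjectivity and the exact determination of the abelianization to Johnson's papers, which is exactly the paper's own stance.
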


Note that the arithmetic group $Sp_g(\Z)$
is a Zariski dense subgroup of the semisimple algebraic group $Sp_g(\C)$; see e.g. \cite[Corollary 5.16]{R2}. 
Setting $H(\C)=H\otimes \C$ and $L(\C)=L\otimes \C$, note also that the canonical representation
of $Sp_g(\Z)$, coming from \eqref{eq:deftintro}, in $(T_g)_{\ab} \otimes \C \simeq L(\C)$, extends to
a rational representation of $Sp_g(\C)$. This symplectic symmetry propagates to higher degrees,
in the following sense.

Recall that the {\em associated graded Lie algebra} (with respect to the lower central series) of 
a group $T$, $\gr_{\bullet} T$, is generated as a Lie algebra by  $\gr_1 T= T_{\ab}$, since 
the Lie bracket is induced by the group commutator. 

\begin{lemma}
\label{lem:ext}
Given a group extension,
\begin{equation}
\label{eq:tabs}
1\to T\rightarrow \G \rightarrow D\to 1\, ,
\end{equation}
assume that $T$ is finitely generated, $D$ is a Zariski dense subgroup of a complex 
linear algebraic group $S$, and the $D$-action on $T_{\ab}$ extends to a rational representation of
$S$ in $T_{\ab}\otimes \C$. Then the $D$-action on $\gr_{\bullet} T$ extends to an action of $S$ on
$\gr_{\bullet} T \otimes \C$, in the category of graded rational representations;
moreover, every $s\in S$ acts on $\gr_{\bullet} T\otimes \C$ by a graded Lie algebra automorphism.
\end{lemma}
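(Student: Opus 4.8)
The plan is to build the $S$-action on $\gr_\bullet T \otimes \C$ degree by degree, using the fact that $\gr_\bullet T$ is generated in degree $1$ as a Lie algebra, so that the iterated Lie bracket gives canonical surjections from the free Lie algebra on $\gr_1 T = T_{\ab}$ onto $\gr_\bullet T$. Concretely, let $\mathbb{L}_\bullet$ denote the free $\C$-Lie algebra on the vector space $T_{\ab} \otimes \C$. The bracket map induces a graded surjection $\pi \colon \mathbb{L}_\bullet \surj \gr_\bullet T \otimes \C$. The $D$-action on $\gr_\bullet T$ is the one induced functorially by conjugation on $T$, and it is compatible with brackets; hence on $\gr_\bullet T \otimes \C$ it factors through $\pi$ from the $D$-action on $\mathbb{L}_\bullet$ obtained by functoriality of the free Lie algebra applied to the given $D$-action on $T_{\ab}$. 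By hypothesis the latter extends to a rational $S$-representation on $T_{\ab}\otimes\C$, and functoriality of $\mathbb{L}_\bullet$ (each graded piece $\mathbb{L}_d$ is a polynomial functor of the generating space, a subquotient of $(T_{\ab}\otimes\C)^{\otimes d}$) then equips $\mathbb{L}_\bullet$ with a graded rational $S$-representation, each $s \in S$ acting by a graded Lie algebra automorphism.

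The heart of the matter is to show that this $S$-action descends along $\pi$, i.e. that $\ker \pi \subseteq \mathbb{L}_\bullet$ is $S$-stable. First I would observe that $\ker\pi$ is $D$-stable, since $\pi$ is $D$-equivariant (both the free Lie algebra construction and the bracket map on $\gr_\bullet T$ are natural in the group, hence $D$-equivariant for the conjugation action). Now $\ker \pi \cap \mathbb{L}_d$ is a $D$-invariant linear subspace of the finite-dimensional rational $S$-representation $\mathbb{L}_d$; because $D$ is Zariski dense in $S$ and the $S$-action is rational (so the stabilizer of any subspace in the Grassmannian is Zariski closed), a $D$-invariant subspace is automatically $S$-invariant. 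Therefore $\ker\pi$ is $S$-stable in each degree, and $\gr_\bullet T \otimes \C = \mathbb{L}_\bullet / \ker\pi$ inherits a graded rational $S$-representation. The $D$-action on this quotient is the original one by construction, so we indeed have an extension.

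Finally, since each $s\in S$ acts on $\mathbb{L}_\bullet$ by a graded Lie algebra automorphism and $\pi$ is a Lie algebra homomorphism, the induced action on the quotient $\gr_\bullet T\otimes\C$ is also by graded Lie algebra automorphisms; and $s$ acts invertibly because the representation is a group representation of $S$. This yields all the claimed properties.

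I expect the main obstacle to be the justification that $D$-invariance of a subspace upgrades to $S$-invariance, which relies on Zariski density of $D$ in $S$ together with rationality of the representation (the action map $S \to \GL(\mathbb{L}_d)$ is a morphism of algebraic groups, and $\{ s : s(\ker\pi\cap\mathbb{L}_d) = \ker\pi\cap\mathbb{L}_d\}$ is then Zariski closed and contains the dense subgroup $D$). A secondary point requiring care is making precise that $\mathbb{L}_d$ carries a \emph{rational} $S$-representation — this follows because $\mathbb{L}_d$ is, functorially and $\GL$-equivariantly, a direct summand (in characteristic zero) of the $d$-th tensor power of the generating module, and tensor powers and summands of rational representations are rational. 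I would also remark that this lemma specializes, via Theorem \ref{thm:john} and the density statement recalled after \eqref{eq:deft}, to give the promised $Sp_g(\C)$-symmetry on $\gr_\bullet T_g \otimes \C$ for $g \ge 3$.
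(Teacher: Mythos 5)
Your argument is correct and rests on the same crucial point as the paper's proof: a $D$-invariant linear subspace of a finite-dimensional rational $S$-module must be $S$-invariant, because $D$ is Zariski dense in $S$ and subspace stabilizers are Zariski closed. The difference is in the choice of universal object and in how the automorphism property is obtained. The paper works with the tensor powers $(T_{\ab}\otimes\C)^{\otimes q}$, defines $K_q$ as the kernel of the iterated-bracket surjection onto $\gr_q T \otimes \C$, gets $S$-invariance of $K_q$ from density, and then must verify \emph{separately} (by an induction using the Jacobi identity) that the resulting $S$-action on the quotients is by graded Lie algebra automorphisms. You instead use the free Lie algebra $\bL_{\bullet}(T_{\ab}\otimes\C)$ and the $D$-equivariant graded surjection $\pi$ onto $\gr_{\bullet}T\otimes\C$; since each graded piece $\bL_d$ is a $\GL$-equivariant direct summand of the $d$-th tensor power (via the Dynkin idempotent, in characteristic zero) it is a rational $S$-module with $S$ acting by Lie algebra automorphisms for free, $\ker\pi$ is a graded $S$-stable ideal, and the quotient automatically inherits the automorphism property. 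This is a somewhat cleaner packaging of the same core argument, trading the paper's Jacobi-identity induction for functoriality of the free Lie algebra; the point worth stating explicitly (which you do address) is rationality of each $\bL_d$ as an $S$-module, which requires the summand-of-tensor-power observation rather than being immediate.
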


\begin{proof}
Presumably this result is well-known to the experts. Being unable to find a reference, we decided to
include a proof. Set $\TT_{\bullet} := \gr_{\bullet} T \otimes \C$, noting that $D$ acts on $\TT_{\bullet}$ by graded Lie algebra
automorphisms. For each $q\ge 1$, denote by $K_q \subseteq \TT_1^{\otimes q}$ the kernel of the
linear surjection sending $t_1\otimes \cdots \otimes t_q$ to 
$\ad_{t_1}\circ \cdots \ad_{t_{q-1}}(t_q) \in \TT_q$. By construction, $\TT_q = \TT_1^{\otimes q}/K_q$.
Since $D\subseteq S$ is Zariski dense, the linear
subspace $K_q$ is $S$-invariant. It remains to show that the rational representations of $S$ in $\TT_{\bullet}$
constructed in this way, which extend the $D$-action coming from \eqref{eq:tabs}, have the property that
$s[a,b]=[sa, sb]$, for $s\in S$, $a\in \TT_q$ and $b\in \TT_r$. This in turn is easily proved by
induction on $q$. Induction starts with $q=1$, by noting that the iterated Lie bracket,
$\TT_1^{\otimes r+1}\surj \TT_{r+1}$, is $S$-equivariant by construction. For the inductive step,
write $a=[t,a']$, with $t\in \TT_1$ and $a'\in \TT_{q-1}$, and use the Jacobi identity to conclude.
\end{proof}

By Lemma \ref{lem:ext} and Theorem \ref{thm:john}, we have a short exact sequence of rational 
$Sp_g(\C)$-representations, 
\begin{equation}
\label{eq:grlow}
0\to \mathcal{K}\to \bigwedge^2 \gr_1T_g \otimes \C \stackrel{\beta}{\rightarrow} \gr_2T_g \otimes \C \to 0\, ,
\end{equation}
where $\beta$ denotes the Lie bracket and $\mathcal{K}$ is by definition $\ker (\beta)$. Hain \cite{H} 
computed the associated exact sequence of
$\sp_g$-modules, where $\sp_g$ is the Lie algebra of $Sp_g(\C)$. To describe his result, we follow the
conventions from \cite[Section 6]{H}. Our references for algebraic groups (respectively Lie algebras) are
\cite{HG} (respectively \cite{HL}).  

The Lie algebra of the maximal diagonal torus in $Sp_g(\C)$ is denoted $\h$, and has coordinates
$t=(t_1,\dots, t_g)$. Let $\Phi\subseteq \h^*$ be the corresponding root system, with the standard choice
of positive roots, $\Phi^+ = \{ t_i-t_j, t_i+t_j \mid 1\le i<j \le g \} \cup \{ 2t_i \mid 1\le i\le g \}$.
Let $\sp_g := \s= \n^- \oplus \h \oplus \n^+$ be the canonical decomposition of the Lie algebra. We denote
by $B$ the associated Borel subgroup of $Sp_g(\C):= S$, with unipotent radical $U$; the Lie algebra of
$B$ is $\h \oplus \n^+$, and the Lie algebra of $U$ is $\n^+$. We work with the finite-dimensional
$\s$-modules associated to rational representations of $S$. The irreducible ones are of the form $V(\lambda)$,
where the dominant weight $\lambda$ is a positive integral linear combination of the fundamental weights,
$\{ \lambda_j(t)= t_1+\cdots +t_j \mid 1\le j\le g \}$. 

It follows from Theorem \ref{thm:john} that $\gr_1 T_g \otimes \C =V(\lambda_3)$, as
$\sp_g$-modules. According to \cite[Lemma 10.2]{H}, all irreducible submodules of 
$\bigwedge^2 V(\lambda_3)$ occur with multiplicity one, and $\bigwedge^2 V(\lambda_3)$ contains
$V(2\lambda_2) \oplus V(0)$ as a submodule.

\begin{theorem}[Hain]
\label{thm:hainhol}
The $\sp_g$-map $\beta$ from \eqref{eq:grlow} is the canonical $\sp_g$-equivariant projection of
$\bigwedge^2 V(\lambda_3)$ onto the submodule $V(2\lambda_2) \oplus V(0)$.
\end{theorem}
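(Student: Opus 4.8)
The plan is to pin down the map $\beta$ entirely by representation-theoretic constraints, using only the decomposition of $\bigwedge^2 V(\lambda_3)$ into irreducibles with multiplicity one, the known generators of $\gr_{\bullet} T_g$, and Johnson's identification of low-degree quotients. First I would recall that by Lemma~\ref{lem:ext} and Theorem~\ref{thm:john}, \eqref{eq:grlow} is an exact sequence of rational $Sp_g(\C)$-representations, hence of $\sp_g$-modules; so $\beta$ is an $\sp_g$-equivariant surjection from $\bigwedge^2 V(\lambda_3)$ onto $\gr_2 T_g\otimes\C$, and by Schur's lemma $\beta$ is determined up to scalars on each isotypic component. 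Since $\bigwedge^2 V(\lambda_3)$ is multiplicity-free, $\beta$ is completely determined by the \emph{subset} of irreducible constituents on which it is nonzero; equivalently, $\mathcal{K}=\ker\beta$ is a sum of a subset of these constituents, and the theorem amounts to the identity $\mathcal{K}=\bigl(\bigwedge^2 V(\lambda_3)\bigr)\ominus\bigl(V(2\lambda_2)\oplus V(0)\bigr)$.

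Next I would invoke the precise decomposition of $\bigwedge^2 V(\lambda_3)$ into its (multiplicity-one) irreducible summands — this is the content of \cite[Lemma 10.2]{H}, which I may quote — and observe that $V(2\lambda_2)\oplus V(0)$ is exactly the part of that decomposition that occurs as $\gr_2$ of a group whose degree-one part is $V(\lambda_3)$. The cleanest route is to identify $\gr_2 T_g\otimes\C$ directly: the image of $\beta$ is the Lie-bracket square $[\,\TT_1,\TT_1\,]$, and Hain computes in \cite{H} (or one re-derives from the presentation of the associated graded Lie algebra of $T_g$ in low degrees) that this square is isomorphic, as an $\sp_g$-module, to $V(2\lambda_2)\oplus V(0)$. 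Granting that isomorphism, the conclusion follows: $\beta$ is a surjection of $\bigwedge^2 V(\lambda_3)$ onto $V(2\lambda_2)\oplus V(0)$, and since each of $V(2\lambda_2)$ and $V(0)$ appears with multiplicity one in the source, the induced maps $\bigwedge^2 V(\lambda_3)\to V(2\lambda_2)$ and $\bigwedge^2 V(\lambda_3)\to V(0)$ are nonzero scalar multiples of the canonical projections, so $\beta$ is (a scalar multiple of, hence after normalizing exactly) the canonical projection onto $V(2\lambda_2)\oplus V(0)$, and the kernel is forced to be the complementary sum of irreducibles.

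The main obstacle is establishing that $\gr_2 T_g\otimes\C\cong V(2\lambda_2)\oplus V(0)$ as $\sp_g$-modules — i.e. that $\beta$ vanishes on every other constituent of $\bigwedge^2 V(\lambda_3)$ and is nonzero on these two. The nontrivial direction is the vanishing: one must show that the Johnson relations (the kernel of the second Johnson homomorphism, which cuts $\gr_2 T_g$ out of $\bigwedge^2 V(\lambda_3)$) account for \emph{all} of $\mathcal{K}$ and not just part of it. I would handle this by combining Johnson's computation of the cokernel of $\tau_2$ (equivalently the target of the second Johnson homomorphism, known to be $V(2\lambda_2)\oplus V(0)$ via Morita's and Hain's work) with the surjectivity of $\gr_{\bullet} T_g$ onto that target in degree $2$, which is built into the definition of the associated graded Lie algebra and the fact that $\gr_1$ generates. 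A dimension count over $\sp_g$-isotypic components then closes the argument. The remaining steps — equivariance of $\beta$, multiplicity-one, Schur — are routine.
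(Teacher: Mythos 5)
The paper offers no proof of this statement: it is a recorded result of Hain, attributed and cited as such (note the ``Theorem~[Hain]'' heading, and the surrounding references to \cite{H}), so there is no internal argument of the paper to compare yours against.

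Your proposal is best read as a correct \emph{reduction}, not a proof. The Schur-lemma step is sound: once one knows (from \cite[Lemma~10.2]{H}) that $\bigwedge^2 V(\lambda_3)$ is multiplicity-free, the $\sp_g$-equivariance of $\beta$ together with its surjectivity (from generation of $\gr_\bullet T_g$ in degree one) determines $\beta$ up to scalars on each isotypic piece, so the theorem is indeed equivalent to the $\sp_g$-isomorphism $\gr_2 T_g\otimes\C \cong V(2\lambda_2)\oplus V(0)$. But at precisely that point your argument cites Hain again for the isomorphism, which is essentially the theorem being proved: you have reformulated it, not established it. The genuine content of Hain's argument---the infinitesimal presentation of the Torelli Lie algebra, the second Johnson homomorphism and the identification of its image, and the verification that no other irreducible constituent of $\bigwedge^2 V(\lambda_3)$ survives in $\gr_2$---is gestured at (``Johnson's computation of the cokernel of $\tau_2$,'' ``Morita's and Hain's work'') but entirely deferred. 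One sentence is also misleading: it is not meaningful to say that $V(2\lambda_2)\oplus V(0)$ ``is exactly the part of that decomposition that occurs as $\gr_2$ of a group whose degree-one part is $V(\lambda_3)$,'' since $\gr_2$ depends on the group and its relations, not only on its abelianization---which is exactly why the computational input cannot be avoided. Since the paper itself treats the theorem as a black box, your reduction plus a citation to \cite{H} is consistent with how the authors use the result, but it should not be presented as an independent proof.
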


\section{Resonance varieties of Torelli groups} \label{sec:res}

In this section, we use representation theory to compute the resonance varieties (in degree $1$ and
for depth $1$) of the Torelli groups $T_g$, for $g\ge 3$, over $\C$.

We begin by reviewing the {\em resonance varieties}, $\R^i_d (A^{\bullet})$, associated to a connected, 
graded-commutative $\C$-algebra $A^{\bullet}$, for (degree) $i\ge 0$ and (depth) $d\ge 1$. Given
$a\in A^1$, denote by $\mu_a$ left-multiplication by $a$ in $A^{\bullet}$, noting that $\mu_a^2 =0$,
due to graded-commutativity. Set
\begin{equation}
\label{eq:defres}
\R^i_d (A^{\bullet})= \{ a\in A^1 \mid \dim_{\C} H^i(A^{\bullet}, \mu_a) \ge d \}\, .
\end{equation}

\begin{remark}
\label{rk:dhres}
It seems worth pointing out that, given an arbitrary group extension \eqref{eq:tabs}, one has the
following symmetry property, related to resonance. By standard homological algebra (see e.g. \cite{B}),
the conjugation action of $\G$ on $T$ induces a (right) action of $D$ on $A^{\bullet}= H^{\bullet}(T, \C)$,
by graded algebra automorphisms. We infer from \eqref{eq:defres} that the $D$-representation on
$A^1$ leaves $\R^i_d (A^{\bullet})$ invariant, for all $i$ and $d$.
\end{remark}

In this paper, we need to consider only the resonance varieties $\R_d (A^{\bullet}):= \R^1_d(A^{\bullet})$, 
which plainly depend only on the
co-restriction of the multiplication map, $\cup :\wedge^2 A^1 \to A^2$, to its image. When
$\dim_{\C}A^1 <\infty$, it is easy to see that each $\R_d (A^{\bullet})$ is a Zariski closed, 
homogeneous subvariety
of the affine space $A^1$. For a connected space $M$, we denote $\R^i_d (H^{\bullet}(M, \C))$ by 
$\R^i_d (M)$. It is equally easy to check that $\R^1_d (M)= \R^1_d (M^{(2)})$, for all $d$,
where $M^{(q)}$ denotes the $q$--skeleton of a connected CW-complex $M$.
For a group $G$, $\R^i_d (G)$ means $\R^i_d (K(G, 1))$. 
By considering a classifying map, it is
immediate to check that $\R_d(M)=\R_d(\pi_1(M))$. We will abbreviate $\R_1$ by $\R$.

There is a well-known, useful relation between cup-product in low degrees and group commutator, see
Sullivan \cite{S}, and also Lambe \cite{L} for details. For a group $G$ with finite first Betti 
number, there is a short exact sequence,
\begin{equation}
\label{eq:grcup}
0\to (\gr_2 G\otimes \C)^* \stackrel{d}{\rightarrow} H^1G\wedge H^1G \stackrel{\cup}{\rightarrow}
H^2G \, ,
\end{equation}
where cohomology is taken with $\C$-coefficients and $d$ is dual to the Lie bracket $\beta$ from
\eqref{eq:grlow}. (Note that the only finiteness assumption needed in the proof from \cite{L}
is $b_1(G)<\infty$.)

In what follows, we retain the notation from Section  \ref{sec:csim}. Set $V=L(\C)^*$ and 
$\R=\R(T_g)$. We infer from \eqref{eq:grlow} and \eqref{eq:grcup} that $\R\subseteq V$ is
$Sp_g(\C)$-invariant. Here is our first step in exploiting the complex symmetry from 
Theorem \ref{thm:hainhol}.

\begin{lemma}
\label{lem:max}
If $\R\ne \{0\}$, then $\R$ contains a maximal vector of the $\sp_g$-module $V$. 
\end{lemma}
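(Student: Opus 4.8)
The plan is to exploit the $Sp_g(\C)$-invariance of the Zariski closed, homogeneous cone $\R \subseteq V$ using the Borel fixed point theorem, in the spirit of standard highest-weight arguments. First I would recall that $V = L(\C)^* = V(\lambda_3)^*$, which as an $\sp_g$-module is again irreducible (the adjoint-type dualities for $Sp_g$ make $V(\lambda_3)^* \cong V(\lambda_3)$ when $g$ is even, but in any case $V$ is irreducible with some dominant highest weight). The key point is that $\R$ is a nonzero, $S$-invariant, closed, conical subvariety of the affine space $V$; passing to the projectivization $\PP(\R) \subseteq \PP(V)$, we obtain a nonempty closed $S$-invariant subvariety of a projective variety. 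Since $S = Sp_g(\C)$ is connected (so $B$ is connected solvable), the Borel fixed point theorem gives a point of $\PP(\R)$ fixed by the Borel subgroup $B$, equivalently a $B$-stable line $\ell \subseteq \R$.

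Next I would identify what a $B$-stable line in an $\sp_g$-module is: a line $\C v \subseteq V$ with $B v = \C v$ forces $v$ to be a weight vector for the torus and annihilated (up to scalars, hence after the line condition, genuinely) by the unipotent radical $U$, i.e. $\n^+ v = 0$. That is exactly the definition of a \emph{maximal vector} (highest-weight vector) of the $\sp_g$-module $V$ — $v \neq 0$, $v$ is an $\h$-weight vector, and $\n^+ \cdot v = 0$. Thus $\R$ contains a maximal vector of $V$, which is the assertion. Since $V = V(\lambda_3)^*$ is irreducible, there is essentially a unique such line up to the $S$-action, but we only need existence.

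The only mild subtlety — and the step I would flag as needing a sentence of care rather than real difficulty — is making sure the reduction to projective space is legitimate: $\R$ is conical and closed but a priori could be just $\{0\}$; however we assumed $\R \neq 0$, so $\R \setminus \{0\}$ is nonempty and $\PP(\R)$ is a nonempty closed subvariety of $\PP(V)$, and the $S$-action on $\R$ descends to $\PP(\R)$ because the $S$-representation on $V$ is linear (hence commutes with scaling). After that, Borel fixed point applies verbatim. I do not expect any genuine obstacle here; the lemma is a soft consequence of $S$-invariance plus irreducibility of $V$, and the whole argument is a two-line application of the fixed point theorem once the projectivization is set up. I would also remark, for use in the sequel, that by Hain's Theorem \ref{thm:hainhol} and the exact sequence \eqref{eq:grcup}, $\R$ is cut out inside $V$ by the vanishing conditions dual to the projection onto $V(2\lambda_2) \oplus V(0)$, so checking whether the unique highest-weight line actually lies in $\R$ becomes a concrete finite computation in representation theory — which is presumably the content of the next step.
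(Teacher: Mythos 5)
Your proposal is correct and follows essentially the same route as the paper: both invoke Borel's fixed point theorem (applied, implicitly or explicitly, to the projectivization of the conical variety $\R$) to produce a $B$-stable line $\C v \subseteq \R$, and then read off that $v$ is an $\h$-weight vector killed by $\n^+$, i.e.\ a maximal vector. One incidental slip worth noting, though it does not affect the argument: $V(\lambda_3)^* \cong V(\lambda_3)$ for \emph{all} $g$, not just even $g$, since $-1$ lies in the Weyl group of $\sp_g$ (type $C_g$) for every $g$ — which is exactly the self-duality fact the paper records before Lemma \ref{lem:dual}.
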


\begin{proof}
This is an easy consequence of Borel's fixed point theorem \cite[Theorem 21.2]{HG}, which
guarantees the existence of a $B$-invariant line, $\C\cdot v \subseteq \R$. Invariance under the action 
of the maximal torus implies that $v$ belongs to a weight space of the $\h$-action on $V$, that is,
$v\in V_{\lambda}$ for some $\lambda \in \h^*$. Finally, $\n^+\cdot v=0$ follows from $U$-invariance.
\end{proof}

Since $-1$ belongs to the Weyl group of $\sp_g$, all finite-dimensional representations of $\sp_g$ are
self-dual \cite[Exercise 6 on p.116]{HL}. This remark leads to the following dual reformulation of
Theorem \ref{thm:hainhol}, via \eqref{eq:grcup}. 

\begin{lemma}
\label{lem:dual}
Set $V=V(\lambda_3)$. Then $H^1T_g=V$, as $\sp_g$-modules, and the kernel of the cup-product,
$\cup: \bigwedge^2 H^1T_g \to H^2T_g$, is  $V(2\lambda_2) \oplus V(0)$.
\end{lemma}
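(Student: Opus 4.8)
The plan is to read off Lemma~\ref{lem:dual} directly from Theorem~\ref{thm:hainhol} by dualizing the short exact sequence \eqref{eq:grlow} and splicing it against \eqref{eq:grcup}. First I would recall that by Theorem~\ref{thm:john} we have $\gr_1 T_g\otimes\C = V(\lambda_3)$ as $\sp_g$-modules, and since $-1$ lies in the Weyl group of $\sp_g$, every finite-dimensional $\sp_g$-module is self-dual \cite[Exercise 6 on p.116]{HL}; hence $H^1 T_g = (\gr_1 T_g\otimes\C)^* = V(\lambda_3)^* \cong V(\lambda_3) =: V$, giving the first assertion. Moreover $\bigwedge^2 H^1 T_g \cong (\bigwedge^2 \gr_1 T_g\otimes\C)^*$ as $\sp_g$-modules.

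Next I would dualize \eqref{eq:grlow}. Applying $\Hom_\C(-,\C)$ to the short exact sequence $0\to\mathcal{K}\to\bigwedge^2\gr_1 T_g\otimes\C\xrightarrow{\beta}\gr_2 T_g\otimes\C\to 0$ of rational $Sp_g(\C)$-representations yields an exact sequence $0\to(\gr_2 T_g\otimes\C)^*\xrightarrow{\beta^*}(\bigwedge^2\gr_1 T_g\otimes\C)^*\to\mathcal{K}^*\to 0$. Identifying the middle term with $\bigwedge^2 H^1 T_g$, the map $\beta^*$ is exactly the map $d$ appearing in \eqref{eq:grcup} (this is the content of the Sullivan--Lambe exact sequence: $d$ is dual to the Lie bracket $\beta$, and the hypothesis $b_1(T_g)<\infty$ holds since $T_g$ is finitely generated for $g\ge 3$). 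Therefore $\ker(\cup:\bigwedge^2 H^1 T_g\to H^2 T_g) = \im(d) = \im(\beta^*) \cong (\gr_2 T_g\otimes\C)^*$ as $\sp_g$-modules.

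It then remains to identify $(\gr_2 T_g\otimes\C)^*$ with $V(2\lambda_2)\oplus V(0)$. By Theorem~\ref{thm:hainhol}, $\beta$ is the canonical $\sp_g$-equivariant projection of $\bigwedge^2 V(\lambda_3)$ onto its submodule $V(2\lambda_2)\oplus V(0)$, so $\gr_2 T_g\otimes\C \cong V(2\lambda_2)\oplus V(0)$; dualizing and using self-duality of $\sp_g$-modules again (so $V(2\lambda_2)^*\cong V(2\lambda_2)$ and $V(0)^*\cong V(0)$) gives $(\gr_2 T_g\otimes\C)^* \cong V(2\lambda_2)\oplus V(0)$, which completes the identification of the cup-product kernel.

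I do not anticipate a genuine obstacle here: the statement is essentially a bookkeeping combination of Hain's theorem, the Sullivan--Lambe sequence, and self-duality of $\sp_g$-representations. The one point requiring a little care is making sure that the abstract map $d$ of \eqref{eq:grcup} is genuinely $\sp_g$-equivariant and genuinely the dual of $\beta$ of \eqref{eq:grlow} — i.e.\ that the Sullivan--Lambe construction is natural with respect to the $Sp_g(\C)$-action coming from Lemma~\ref{lem:ext} and Remark~\ref{rk:dhres}. Since that action on $H^1 T_g$ is the one induced by outer conjugation, and the lower-central-series Lie bracket is manifestly equivariant for it, the identification of $d$ with $\beta^*$ respects the module structures, so no difficulty arises.
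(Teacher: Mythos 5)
Your proposal is correct and follows essentially the same route as the paper: the paper's entire proof consists of the observation that $-1$ is in the Weyl group of $\sp_g$ so all finite-dimensional $\sp_g$-modules are self-dual, and that the Lemma is therefore the dual reformulation of Theorem~\ref{thm:hainhol} via the Sullivan--Lambe sequence~\eqref{eq:grcup}. You have simply spelled out the bookkeeping (dualizing~\eqref{eq:grlow}, identifying $d$ with $\beta^*$, and invoking multiplicity-one implicitly when passing from the abstract isomorphism $(\gr_2 T_g\otimes\C)^*\cong V(2\lambda_2)\oplus V(0)$ to the named submodule of $\bigwedge^2 V$) in more detail than the paper chose to.
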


Our main result in this section is the following.

\begin{theorem}
\label{thm:rest}
For $g=3$, $\R(T_3)=H^1T_3$, while $\R(T_g)=\{0\}$, for $g\ge 4$. 
\end{theorem}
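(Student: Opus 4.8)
The case $g=3$ is the elementary one: by Lemma~\ref{lem:dual} the cup-product $\cup:\bigwedge^2 H^1T_3\to H^2T_3$ has kernel $V(2\lambda_2)\oplus V(0)$, which by Hain's multiplicity-one statement \cite[Lemma 10.2]{H} is all of $\bigwedge^2 V(\lambda_3)$ when $g=3$ (the other irreducible summand, which would give the image of $\cup$, is absent in that genus). Hence the cup-product vanishes identically, so $\mu_a^2=0$ is automatic and $H^1(A^\bullet,\mu_a)\ne 0$ for every $a$; thus $\R(T_3)=H^1T_3$. I would state this as the short opening paragraph.

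For $g\ge 4$ the strategy is to assume $\R=\R(T_g)\ne 0$ and derive a contradiction. By Lemma~\ref{lem:max}, $\R$ then contains a maximal vector $v$ of the $\sp_g$-module $V=V(\lambda_3)$; since $V$ is irreducible, the highest weight space is one-dimensional and spanned by the standard highest weight vector, which I will write explicitly in terms of the symplectic basis $\{a_i,b_i\}$ (the image in $\bigwedge^3 H/H$ of $a_1\wedge a_2\wedge a_3$, dualized). The defining condition $v\in\R$ says $H^1(A^\bullet,\mu_v)\ne 0$, i.e.\ there exists $w\in A^1=V$ with $v\wedge w$ in the kernel of $\cup$ but $w\notin\C v$. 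Via \eqref{eq:grcup} and Lemma~\ref{lem:dual}, the kernel of $\cup$ on $\bigwedge^2 H^1T_g$ is exactly $V(2\lambda_2)\oplus V(0)$, so the whole question reduces to a concrete representation-theoretic computation: determine all $w$ such that the decomposable element $v\wedge w\in\bigwedge^2 V(\lambda_3)$ lies in the submodule $V(2\lambda_2)\oplus V(0)$, where $v$ is the fixed highest weight vector.

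The key computation is a weight-space bookkeeping argument. Decompose $\bigwedge^2 V(\lambda_3)$ into its $\sp_g$-isotypic pieces; by \cite[Lemma 10.2]{H} each irreducible occurs at most once, and the summands other than $V(2\lambda_2)$ and $V(0)$ (for instance $V(\lambda_2+2\lambda_1)$ and $V(2\lambda_1)$, or more precisely the summand whose presence for $g\ge4$ distinguishes this case from $g=3$) span a complement to $\ker(\cup)$. Projecting $v\wedge w$ to that complement and requiring the projection to vanish forces strong constraints on $w$: writing $w$ in the weight basis of $V(\lambda_3)$ and computing the weight $\lambda_3+\mathrm{wt}(w)$ components, one checks that the only solutions are $w\in\C v$, because the decomposable vectors $v\wedge w$ with $v$ maximal live in weight spaces that, for $g\ge4$, meet the ``forbidden'' summand nontrivially unless $w$ is proportional to $v$. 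This contradicts $w\notin\C v$, so $\R=0$.

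\textbf{Main obstacle.} The delicate point is the explicit branching/weight computation in the previous paragraph: one must pin down precisely which irreducible constituents of $\bigwedge^2 V(\lambda_3)$ are \emph{not} in $\ker(\cup)$ and show that the decomposable element $v\wedge w$ (with $v$ the highest weight vector) has nonzero component in one of them whenever $w\notin\C v$. The genus hypothesis $g\ge4$ enters exactly here — in genus $3$ the ``extra'' summand degenerates or disappears, which is why $\R(T_3)$ is everything — so the argument cannot be purely formal and must use the structure of $V(\lambda_3)=\bigwedge^3 H/H$ in a way that is sensitive to $g$. I would carry this out by exhibiting one specific $\sp_g$-equivariant projection (e.g.\ contraction with the symplectic form composed with a suitable wedge, landing in $V(\lambda_2+2\lambda_1)$ or the relevant module) and evaluating it on $v\wedge w$ directly in coordinates.
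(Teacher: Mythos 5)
Your treatment of $g=3$ is correct and matches the paper's: $\bigwedge^2 V(\lambda_3)=V(2\lambda_2)\oplus V(0)$ in that genus, so the cup product vanishes and every nonzero $a\in H^1T_3$ lies in $\R^1_1$.

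For $g\ge 4$ you set up the problem the same way the paper does (assume $\R\ne 0$, use Lemma~\ref{lem:max} and uniqueness of maximal vectors to locate the highest weight vector $v_0$ in $\R$, reduce to showing $\im(\mu_{v_0})\cap W=0$ where $W=V(2\lambda_2)\oplus V(0)$ is the kernel of cup product). But from there your proposal has a genuine gap: you want to establish that $v_0\wedge w\notin W$ for \emph{every} $w\notin\C v_0$ by ``weight-space bookkeeping'' and by ``exhibiting one specific $\sp_g$-equivariant projection'' onto a complementary summand. That step is not carried out, and as stated it is not obviously achievable: a single equivariant projection $\pi$ onto one irreducible constituent $U$ outside $W$ would have to make $\pi\circ\mu_{v_0}\colon V/\C v_0\to U$ injective, and there is no reason a priori that $\dim U\ge\dim V-1$; more importantly, you give no mechanism to control $v_0\wedge w$ for a \emph{general} $w$ (whose weight decomposition spreads over many weight spaces). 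Your guesses for the missing summands ($V(\lambda_2+2\lambda_1)$, $V(2\lambda_1)$) are also unverified.

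The missing idea, which the paper supplies, is Engel's theorem. Because $\n^+\cdot v_0=0$, the map $\mu_{v_0}$ is $\n^+$-equivariant, hence $\im(\mu_{v_0})\cap W$ is an $\n^+$-submodule of $W$; by Engel it contains, \emph{if nonzero}, a vector annihilated by $\n^+$. The $\n^+$-annihilated vectors in $W=V(2\lambda_2)\oplus V(0)$ form only the two-dimensional space $\C u_0\oplus\C z_0$, so the entire problem collapses to two explicit checks (whether $v_0\wedge u_0=0$, and whether $v_0\wedge\overline{v}_0\in V(0)$), which the paper carries out in coordinates via Johnson's contraction and the submodule $L'=\ker(C)$. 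Without this reduction, your ``direct projection and bookkeeping'' strategy would have to handle arbitrary $w$ and does not yet constitute a proof. If you want to salvage your route, the cleanest fix is precisely to insert the Engel reduction before doing any coordinate computation; then the remaining finite verification is exactly the content of Lemma~\ref{lem:non}.
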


The assertion for $g=3$ is an immediate consequence of Lemma \ref{lem:dual}, since in this case
$\bigwedge^2V= V(2\lambda_2) \oplus V(0)$, cf. Lemma 10.2 from \cite{H}. So, we will assume 
in the sequel that $g\ge 4$ and $\R(T_g)\ne \{0\}$, and use Lemma \ref{lem:max} to derive a contradiction.

Set $V(0)=\C \cdot z_0$. We will need explicit maximal vectors, $v_0$ for $V(\lambda_3)$ and
$u_0$ for $V(2\lambda_2)$. To this end, we recall that $V(\lambda_3)= (\bigwedge^3 H/H)\otimes \C$,
where the $\sp_g$-action on $\bigwedge^{\bullet} H$ is by algebra derivations; in particular,
$s\cdot x\wedge y\wedge z= s\cdot x\wedge y \wedge z+ x\wedge s\cdot y\wedge z+ x\wedge y\wedge s\cdot z$,
for $s\in \s$ and $x,y,z\in H$. Set $v_0'= a_1\wedge a_2\wedge a_3$, and 
$u_0'= \sum_{k=3}^g ( a_1\wedge a_2\wedge a_k)\bigwedge ( a_1\wedge a_2\wedge b_k)$. Denote by $v_0$
the class of $v_0'$ in $V=V(\lambda_3)$, and let $u_0$ be the class of $u_0'$ in $\bigwedge^2V$.
Using the explicit description of $\h \oplus \n^+$ from \cite[Section 6]{H}, it is straightforward 
to check that $v_0$ has weight $\lambda_3$, $u_0$ has weight $2\lambda_2$, and 
$\n^+\cdot v_0=\n^+\cdot u_0=0$.

To verify that both $v_0$ and $u_0$ are non-zero, we will resort to the $Sp_g(\Z)$-equivariant contraction 
constructed by Johnson in \cite[p.235]{J0}, $C: \bigwedge^3 H \to H$, given by
\begin{equation}
\label{eq:contr}
C(x\wedge y\wedge z)= (x\cdot y)z+ (y\cdot z)x+ (z\cdot x)y \, ,
\end{equation}
where the dot designates the intersection form on $H$. Set $L'=\ker(C)$, and $L'(\C)=L' \otimes \C$.
It follows from \cite[pp.238-239]{J0} that there is an induced $\sp_g$-isomorphism, 
\begin{equation}
\label{eq:lprim}
L'(\C)\isom L(\C)\, .
\end{equation}
Clearly, $0\ne v_0'\in L'$ and $0\ne u_0'\in \bigwedge^2 L'$. Hence, both $v_0$ and $u_0$ are non-zero,
as needed.

We infer from uniqueness of maximal vectors \cite[Corollary 20.2]{HL} that necessarily $v_0\in \R=\R(T_g)$,
if $\R\ne \{0\}$; see Lemma \ref{lem:max}. Denote by $W$ the $\s$-submodule $V(2\lambda_2) \oplus V(0)$ of
$\bigwedge^2V$ from Lemma \ref{lem:dual}. By definition \eqref{eq:defres} and Lemma \ref{lem:dual}, 
$v_0\in \R$ if and only if $\im (\mu_0)\cap W \ne 0$, where $\mu_0: V\to \bigwedge^2V$ denotes 
left-multiplication by $v_0\in V$ in the exterior algebra.

Since $\n^+\cdot v_0=0$, we conclude that $\mu_0$ is $\n^+$-equivariant. Since $v_0\in V_{\lambda_3}$,
it follows that
\begin{equation}
\label{eq:wshift}
\mu_0 (V_{\lambda'})\subseteq (\bigwedge^2V)_{\lambda_3 +\lambda'}\, ,
\end{equation}
for each weight subspace $V_{\lambda'}\subseteq V$. 

\begin{lemma}
\label{lem:eng}
If $\R(T_g)\ne \{0\}$, then $\im (\mu_0)\cap W$ contains a non-zero vector killed by $\n^+$. 
\end{lemma}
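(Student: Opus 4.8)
The plan is to exploit the $\n^+$-equivariance of $\mu_0$ together with the fact that $W$ is an $\s$-submodule, hence in particular $\n^+$-stable. First I would recall that $v_0 \in \R$ forces $\im(\mu_0) \cap W \ne 0$ (this is the reformulation of $v_0 \in \R$ already established via Lemma \ref{lem:dual} and \eqref{eq:defres}), so there is some nonzero $w \in \im(\mu_0) \cap W$. Since $\mu_0$ is $\n^+$-equivariant (because $\n^+ \cdot v_0 = 0$) and $\im(\mu_0)$ is therefore an $\n^+$-submodule of $\bigwedge^2 V$, and since $W$ is $\s$-stable (in particular $\n^+$-stable), the intersection $\im(\mu_0) \cap W$ is itself an $\n^+$-submodule of $\bigwedge^2 V$. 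It is finite-dimensional and nonzero.

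Next I would apply Engel's theorem (hence the name of the lemma): $\n^+$ is a nilpotent Lie algebra acting on the finite-dimensional nonzero space $\im(\mu_0) \cap W$, so by Engel's theorem there is a nonzero vector in $\im(\mu_0) \cap W$ annihilated by all of $\n^+$. This is exactly the asserted conclusion. To make this airtight I would want to note that the $\n^+$-action here is by nilpotent operators — this follows because $\n^+$ consists of nilpotent elements of $\s$ and acts on any finite-dimensional rational $S$-representation (equivalently, finite-dimensional $\s$-module appearing in our setting) by nilpotent endomorphisms; alternatively one uses the weight-space decomposition, since $\n^+$ strictly raises weights and there are only finitely many weights, so any element of $\n^+$ acts nilpotently. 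Either justification suffices; the weight-grading argument is cleanest given that we have already been working with weight spaces as in \eqref{eq:wshift}.

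I expect the main (minor) obstacle to be bookkeeping about which category we are in: one must be sure that $\im(\mu_0) \cap W$ is genuinely closed under the $\n^+$-action, which requires only that both $\im(\mu_0)$ and $W$ are $\n^+$-stable subspaces of $\bigwedge^2 V$ — the former from $\n^+$-equivariance of $\mu_0$, the latter from $W$ being an $\s$-submodule by Lemma \ref{lem:dual}. Once that is in place, Engel's theorem does all the work, and no representation-theoretic input beyond the general structure theory of $\s = \sp_g$ is needed. In summary: the proof is \emph{(i)} observe $\im(\mu_0) \cap W$ is a nonzero finite-dimensional $\n^+$-module; \emph{(ii)} observe $\n^+$ acts by nilpotent operators (weights strictly increase); \emph{(iii)} invoke Engel to produce a nonzero $\n^+$-fixed vector inside it.
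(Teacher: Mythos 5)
Your proposal is correct and matches the paper's argument: both establish that $\im(\mu_0)\cap W$ is a nonzero, finite-dimensional $\n^+$-module (using the $\n^+$-equivariance of $\mu_0$ already recorded just before the lemma, plus $W$ being an $\s$-submodule), verify nilpotency of the $\n^+$-action via the fact that positive roots strictly raise weights and the module has only finitely many weights, and then invoke Engel's theorem to produce the $\n^+$-annihilated vector. The only cosmetic difference is that you also mention the alternative justification that $\n^+$ consists of nilpotent elements; the paper sticks with the weight-raising/height argument you correctly identify as cleanest here.
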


\begin{proof}
Taking into account the preceding discussion, the assertion will follow from Engel's theorem
\cite[Theorem 3.3]{HL}, applied to  the Lie algebra $\n^+$ and the $\n^+$-module 
$\im (\mu_0)\cap W$. It is enough to check that the action of any $n\in \n^+$ on $W$ is
nilpotent. This in turn follows from the fact that 
$s_{\alpha_1}\cdots s_{\alpha_r}\cdot W_{\lambda'} \subseteq  W_{\lambda''}$, where 
$\lambda''=\lambda' + \sum_{i=1}^r \alpha_i$, for any $\alpha_1,\dots ,\alpha_r \in \Phi^+$ and
any non-trivial weight space $W_{\lambda'}$. Indeed, $ W_{\lambda''}=0$ for $r$ big enough.
To check the vanishing claim, one may invoke \cite[Theorem 20.2(b)]{HL}, and then use 
a height argument.
\end{proof}

\begin{lemma}
\label{lem:non}
The following hold.
\begin{enumerate}
\item \label{n1}
For $g\ge 4$, the vector $v_0\wedge u_0\in \bigwedge^3V$ is non-zero.
\item \label{n2}
The class of $( a_1\wedge a_2\wedge a_3)\bigwedge ( b_1\wedge b_2\wedge b_3)$ in
$\bigwedge^2V$ does not belong to $V(0)$.
\end{enumerate}
\end{lemma}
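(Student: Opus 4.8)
The plan is to prove both statements by explicit computation inside the exterior algebra $\bigwedge^{\bullet}H$, using the contraction $C$ and the isomorphism \eqref{eq:lprim} to transport questions about $V=V(\lambda_3)$ back to the concretely described subspace $L'=\ker(C)\subseteq\bigwedge^3H$. The point is that $\bigwedge^{\bullet}H$ has an obvious monomial basis, so non-vanishing of a specific element can be checked by exhibiting a monomial that appears with nonzero coefficient and survives the projection modulo $H\wedge\omega$ (equivalently, lies in the complement $L'$).

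For part \eqref{n1}: I would compute $v_0\wedge u_0$ as the class in $\bigwedge^3V$ of $v_0'\wedge u_0'\in\bigwedge^3 L'$, where $v_0'=a_1\wedge a_2\wedge a_3$ and $u_0'=\sum_{k=3}^g(a_1\wedge a_2\wedge a_k)\wedge(a_1\wedge a_2\wedge b_k)$. Expanding, $v_0'\wedge u_0'=\sum_{k=3}^g(a_1\wedge a_2\wedge a_3)\wedge(a_1\wedge a_2\wedge a_k)\wedge(a_1\wedge a_2\wedge b_k)$; the $k=3$ summand vanishes (repeated $a_3$), so $v_0'\wedge u_0'=\sum_{k=4}^g(a_1\wedge a_2\wedge a_3)\wedge(a_1\wedge a_2\wedge a_k)\wedge(a_1\wedge a_2\wedge b_k)$, which requires $g\ge 4$ to be a nonempty sum. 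Since the monomials indexed by distinct $k$ involve distinct basis wedge-factors (the factor $a_1\wedge a_2\wedge a_k$ pins down $k$), there is no cancellation among them; and since each summand already lies in $\bigwedge^3 L'$ (because $v_0',u_0'$ do) it maps to a nonzero element of $\bigwedge^3V$ under \eqref{eq:lprim}. One just has to verify a single summand, e.g. the $k=4$ term, is nonzero in $\bigwedge^3\!\bigwedge^3 H$, which is clear as it is a wedge of three distinct basis vectors of $\bigwedge^3H$.

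For part \eqref{n2}: write $w'=(a_1\wedge a_2\wedge a_3)\wedge(b_1\wedge b_2\wedge b_3)\in\bigwedge^2\!\bigwedge^3H$. First check $a_1\wedge a_2\wedge a_3$ and $b_1\wedge b_2\wedge b_3$ both lie in $L'=\ker(C)$: indeed $C(a_i\wedge a_j\wedge a_k)=0$ since all intersection pairings $a_r\cdot a_s$ vanish, and similarly for the $b$'s. So $w'$ represents a class $w\in\bigwedge^2V$. Under the decomposition of $\bigwedge^2V$ into irreducibles with multiplicity one (Hain's Lemma 10.2), the copy of $V(0)$ is a specific line; I would produce an explicit generator of it and show $w$ is not proportional to it. The cleanest way is a weight argument: the $V(0)$ summand is the $\h$-weight-zero, $\n^+$-invariant line, so any element of $V(0)$ is $Sp_g$-invariant, hence in particular $U$-invariant and $\h$-invariant. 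One checks $w$ has $\h$-weight zero (the weights of $a_1\wedge a_2\wedge a_3$ and $b_1\wedge b_2\wedge b_3$ are $t_1+t_2+t_3$ and $-(t_1+t_2+t_3)$, summing to $0$), so weight alone does not exclude it; instead apply a raising operator $e\in\n^+$ and show $e\cdot w\ne 0$. Concretely, using the derivation action and the explicit $\n^+$ from \cite[Section 6]{H}, pick the root vector sending $b_3\mapsto a_3$ (the vector for $2t_3$): it kills $a_1\wedge a_2\wedge a_3$ and sends $b_1\wedge b_2\wedge b_3$ to $b_1\wedge b_2\wedge a_3$, giving $e\cdot w'=(a_1\wedge a_2\wedge a_3)\wedge(b_1\wedge b_2\wedge a_3)$, whose representative in $\bigwedge^2V$ is nonzero by the same monomial/$L'$ argument as above (both factors are killed by $C$, and the wedge of the two distinct basis vectors of $\bigwedge^3H$ is nonzero). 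Hence $w$ is not $\n^+$-invariant, so $w\notin V(0)$.

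The main obstacle is bookkeeping rather than ideas: one must be careful that "nonzero in $\bigwedge^{\bullet}\!\bigwedge^3H$" really does imply "nonzero in $\bigwedge^{\bullet}V$", which is exactly what \eqref{eq:lprim} and the inclusion $L'\hookrightarrow\bigwedge^3H$ buy us, provided every representative we write down genuinely lies in $\bigwedge^{\bullet}L'$; so the care points are (i) checking each monomial factor is in $\ker(C)$ before passing to $V$, and (ii) ensuring the chosen root vector acts as claimed under the derivation convention, for which I would quote the explicit formulas in \cite[Section 6]{H}. Everything else is a short finite computation.
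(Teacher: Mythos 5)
Your proof is correct and follows essentially the same route as the paper: pass via \eqref{eq:lprim} to explicit computations in $\bigwedge^{\bullet}L'\subseteq\bigwedge^{\bullet}(\wedge^3 H)$ for part \eqref{n1}, and for part \eqref{n2} rule out membership in $V(0)$ by showing the class is not annihilated by a raising operator. The only cosmetic difference is the choice of raising operator in part \eqref{n2} (you use the root vector for $2t_3$, sending $b_3\mapsto a_3$; the paper applies Hain's $T_1$, sending $b_1\mapsto a_1$), which is immaterial.
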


\begin{proof}
\eqref{n1} By \eqref{eq:lprim}, it is enough to show that $v_0'\wedge u_0'\in \bigwedge^3(\wedge^3 H)$
is non-zero. This is clear, since 
$v_0'\wedge u_0'=\sum_{k=4}^g ( a_1\wedge a_2\wedge a_3)\bigwedge ( a_1\wedge a_2\wedge a_k)
\bigwedge ( a_1\wedge a_2\wedge b_k)$.

\eqref{n2} Again by \eqref{eq:lprim}, it suffices to verify that the element 
$e=( a_1\wedge a_2\wedge a_3)\bigwedge ( b_1\wedge b_2\wedge b_3)\in \bigwedge^2(\wedge^3 H)\otimes \C$
is not killed by $\sp_g$, since $b_1\wedge b_2\wedge b_3 \in L'$. Indeed, 
$T_1\cdot e= ( a_1\wedge a_2\wedge a_3)\bigwedge ( a_1\wedge b_2\wedge b_3) \ne 0$, where
$T_1\in \sp_g$ is described in \cite[\S 6]{H}.
\end{proof}

We will finish the proof of Theorem \ref{thm:rest}, by showing that Lemmas \ref{lem:eng} 
and \ref{lem:non} lead to a contradiction, assuming $g\ge 4$ and $\R(T_g)\ne \{0\}$. 

Indeed, in this case there is $v\in V(\lambda_3)$ such that 
\begin{equation}
\label{eq:contra}
0\ne v_0\wedge v= w_0 + w\, ,
\end{equation}
with $w_0\in V(0)$, $w\in V(2\lambda_2)$ and $\n^+\cdot w=0$, by Lemma \ref{lem:eng}.

If $w\ne 0$, then $w\in \C^*\cdot u_0$, by uniqueness of maximal vectors. Taking the weight
decomposition of $v$, it follows from \eqref{eq:wshift} that we may suppose that $w_0=0$ in
\eqref{eq:contra}. Therefore, $v_0\wedge u_0=0\in \bigwedge^3V$, contradicting 
Lemma \ref{lem:non}\eqref{n1}.

If $w=0$, then $v_0\wedge v\in \C^*\cdot z_0$, for some $v\in V(\lambda_3)_{-\lambda_3}$, by
the same argument on weight decomposition as before. Since the weights $\lambda_3$ and $-\lambda_3$
are conjugate under the action of the Weyl group, the weight space $V(\lambda_3)_{-\lambda_3}$
is one-dimensional, generated by the class $\overline{v}_0$ of $b_1\wedge b_2\wedge b_3 \in L'$
in $V$; see \cite[Theorems 20.2(c) and 21.2]{HL}, \cite[Section 6]{H} and \eqref{eq:lprim}.
Therefore, $v_0\wedge \overline{v}_0\in V(0)$, contradicting Lemma \ref{lem:non}\eqref{n2}.
The proof of Theorem \ref{thm:rest} is thus completed.

\section{Resonance and finiteness properties} \label{sec:resfin}

We devote this section to a general discussion of finiteness properties related to resonance,
with an application to Torelli groups. Unless otherwise mentioned, we work with $\C$-coefficients.
For a graded object $\OO_{\bullet}$, the notation $\OO$ means that we forget the grading.

We need to review a couple of key notions. We begin with the {\em holonomy Lie algebra}
associated to a connected CW-complex $M$ with finite $1$-skeleton, $\cH_{\bullet}(M)$. This is a
quadratic graded Lie algebra, defined as the quotient of the free Lie algebra generated by $H_1M$,
$\bL_{\bullet}(H_1M)$, graded by bracket length, modulo the ideal generated by the image of
the comultiplication map, $\partial_M: H_2M\to \bigwedge^2 H_1M$; here we are using the standard
identification $\bigwedge^2 H_1M \equiv \bL_{2}(H_1M)$, given by the Lie bracket. Note that the dual 
of $\partial_M$ is the cup-product, $\cup_M: \bigwedge^2 H^1M\to H^2M$. 

When $G$ is a finitely generated group and $M=K(G,1)$, $\cH_{\bullet}(M)$ 
is denoted $\cH_{\bullet}(G)$. By considering a classifying map, it is easy to see that
$\cH_{\bullet}(M)= \cH_{\bullet}(\pi_1(M))$, for a connected CW-complex $M$ with finite $1$-skeleton. 
The associated graded Lie algebra 
$\gr_{\bullet}G \otimes \C$, denoted $\cG_{\bullet}(G)$, is also finitely generated in degree one,
but not necessarily quadratic. The canonical identification, $H_1G \equiv \cG_1(G)$, extends to
a graded Lie algebra epimorphism, $\bL_{\bullet}(H_1G) \surj \cG_{\bullet}(G)$. By \eqref{eq:grcup},
this factors to a graded Lie algebra surjection,
\begin{equation}
\label{eq:grhol}
\cH_{\bullet}(G) \surj \cG_{\bullet}(G) \, .
\end{equation}

Let $P_{\bullet}$ be the polynomial algebra $\Sym (H_1G)$, endowed with the usual grading.
For a Lie algebra $\cH$, denote by $\cH':= [\cH, \cH]$ the derived Lie algebra, and by
$\cH'':= [\cH', \cH']$ the second derived Lie algebra.
The exact sequence of graded Lie algebras
\begin{equation}
\label{eq:binf}
0\to \cH'_{\bullet}(G)/\cH''_{\bullet}(G) \rightarrow \cH_{\bullet}(G)/\cH''_{\bullet}(G) 
\rightarrow  \cH_{\bullet}(G)/\cH'_{\bullet}(G)\to 0
\end{equation}
yields a positively graded $P_{\bullet}$-module structure on the {\em infinitesimal Alexander invariant},
$\bb_{\bullet}(G):= \cH'_{\bullet}(G)/\cH''_{\bullet}(G)$, induced by the adjoint action. 
A finite $P_{\bullet}$-presentation of
$\bb_{\bullet}(G)$ is described in \cite[Theorem 6.2]{PS}. 

Let $E_{k-1}(\bb(G))\subseteq P$ be the {\em Fitting ideal} generated by the codimension
$k-1$ minors of a finite $P$-presentation for $\bb (G)$. The definition of $E_{k-1}$ does not depend on
the $P$--presentation; see \cite[Corollary 20.4]{E}. 
Denote by $\W_k(G)\subseteq H^1G$ 
the zero set of $E_{k-1}(\bb(G))$. The next lemma explains the relationship between the
infinitesimal Alexander invariant and the resonance varieties in degree one.

\begin{lemma}
\label{lem:infares}
Let $G$ be a finitely generated group. Then the equality
\[
\W_k (G) \setminus \{0\}=\R_k(G)\setminus \{0\}
\]
holds for all $k \geq 1$.
\end{lemma}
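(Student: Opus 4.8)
The plan is to exploit the explicit finite $P_{\bullet}$-presentation of the infinitesimal Alexander invariant $\bb_{\bullet}(G)$ furnished by \cite[Theorem 6.2]{PS}, and to compare the Fitting ideals of this presentation with the resonance conditions coming from the Aomoto complex of $H^{\bullet}G$ in degree one. Concretely, since $\cH_{\bullet}(G)$ is the quadratic holonomy Lie algebra with relations dual to the cup product $\cup_G\colon \bigwedge^2 H^1G\to H^2G$, the presentation of $\bb_{\bullet}(G)$ is built functorially out of the co-restriction of $\cup_G$ to its image; the same data determine $\R_k(G)=\R^1_k(H^{\bullet}G)$ through \eqref{eq:defres}. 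I would first recall the shape of the $P_{\bullet}$-presentation $P^{\,a}\xrightarrow{\,\Delta\,}P^{\,b}\surj \bb_{\bullet}(G)\to 0$, and observe that at a nonzero point $a\in H^1G\setminus\{0\}$ the specialized matrix $\Delta(a)$ computes the rank of the differential $\mu_a$ in the relevant degrees of the Aomoto/Koszul-type complex attached to $(H^{\bullet}G,a)$.

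The key steps, in order, are: (i) write down the relevant piece of the Koszul complex $H^1G\otimes P_{\bullet}\to H^2G\otimes P_{\bullet}$ whose cokernel (after the standard bookkeeping) is $\bb_{\bullet}(G)$, using \eqref{eq:grcup} to identify the relations with $\ker(\cup_G)$; (ii) for a fixed $a\ne 0$, specialize $P_{\bullet}\to\C$ along $a$ and identify $\bb_{\bullet}(G)\otimes_{P}\C_a$ with a cohomology group of the complex $(H^{\bullet}G,\mu_a)$ in degree one, so that the local rank of the presentation matrix at $a$ equals $\dim_{\C}H^1(H^{\bullet}G,\mu_a)$ up to a fixed constant shift; (iii) translate the elementary-ideal vanishing condition $a\in\W_k(G)$, i.e. that all codimension-$(k-1)$ minors of the presentation matrix vanish at $a$, into the rank bound $\dim_{\C}H^1(H^{\bullet}G,\mu_a)\ge k$, which by \eqref{eq:defres} is exactly $a\in\R_k(G)$; (iv) conclude the equality of the two sets after removing the origin, where the comparison degenerates because both $\mu_0=0$ and the module structure is concentrated there. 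I would cite \cite[Theorem 6.2]{PS} for the explicit presentation and keep the linear-algebra identification at the level of "rank of a matrix over $P$ at a point = dimension of a cohomology space", which is the standard dictionary between determinantal loci of Alexander-type matrices and jump loci.

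The main obstacle I anticipate is step (ii): matching the grading and indexing conventions so that the fiber of the $P_{\bullet}$-module $\bb_{\bullet}(G)$ at a point $a$ is genuinely $H^1$ of the Aomoto complex, rather than some twist or shift of it, and making sure the "codimension $k-1$ minors" in the definition of $\W_k(G)$ line up with the depth-$k$ jump in $\dim H^1(H^{\bullet}G,\mu_a)$ without an off-by-one error. This bookkeeping is delicate precisely because $\bb_{\bullet}(G)$ is a second-derived-quotient object, so one has to track how the presentation of $\cH'_{\bullet}/\cH''_{\bullet}$ as a module over $\Sym(H_1G)$ encodes the first cohomology of the exterior-algebra complex. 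I would handle it by reducing, via the quadratic-dual description of $\cH_{\bullet}(G)$, to the universal situation of a finite-dimensional graded algebra generated in degree one with $A^0=\C$, where the identification of the linearized Alexander matrix with the first Aomoto differential is classical; everything else (homogeneity of $\W_k(G)$ and $\R_k(G)$, behavior away from $0$) is routine. Note also that no $1$-formality hypothesis is needed here: the comparison is purely between $\cH_{\bullet}(G)$ and $H^{\bullet}G$, both of which are determined by the cup product alone.
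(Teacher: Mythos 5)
Your proposal follows essentially the same route as the paper: start from the explicit $P$-presentation $\bb(G)=\coker(\nabla)$ with $\nabla=\delta_3+\id\otimes\partial_G$ from \cite[Theorem 6.2]{PS}, specialize at a nonzero point $z$, and use the exactness of the Koszul complex $(\bigwedge^{\bullet}H_1G,{}^{\sharp}\lambda_z)$ together with the duality between $\delta_2(z)\circ\partial_G$ and $\mu_z$ to show $\dim_{\C}\coker(\nabla(z))=n-1-\rank(\mu_z)=\dim_{\C}H^1(H^{\bullet}G,\mu_z)$, whence the elementary-ideal condition for $\W_k$ and the jump condition for $\R_k$ coincide away from the origin. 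You correctly isolate the one delicate point — the bookkeeping between the determinantal locus of the presentation matrix and the degree-one Aomoto cohomology — which the paper handles exactly via the exact Koszul complex and the observation that $\dim_{\C}\im(\delta_2(z))=n-1$ for $z\neq 0$.
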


\begin{proof}
We know from \cite[Theorem 6.2]{PS} that $\bb (G)=\coker (\nabla)$, as $P$-modules, where
\begin{equation}
\label{eq:bpres}
\nabla:= \delta _3 + \id \otimes \partial _G\colon 
P\otimes\Big(\bigwedge\nolimits^3 H_1G \oplus H_2G \Big) \to 
P\otimes \bigwedge\nolimits^2 H_1G \, ,
\end{equation}
and the $P$-linear map $\delta_3$ is given by
$ \delta _3 (a \wedge b  \wedge c)
= a\otimes b   \wedge c +b \otimes c  \wedge a +c 
\otimes a \wedge b$, for $a,b,c\in H_1G$. A polynomial $f\in P= \Sym (H_1G)$ may be evaluated at
$z\in H^1 G$. It will be convenient to denote by $F(z)$ the matrix over $\C$ obtained by evaluating 
at $z$ all entries of the $P$--matrix $F$. This may be done for $\nabla$ and all Koszul differentials,
$\delta_i \colon P\otimes \bigwedge^i H_1G \rightarrow P\otimes\bigwedge^{i-1} H_1G$. 

Pick any $z \in H^1G \setminus \{0\}$. By linear algebra, we infer that 
$z \in \W_k (G)$ if and only if $
\dim _{\C}\coker (\nabla (z)) \geq k$. Consider the exact cochain complex
$(\bigwedge ^{\bullet} H^1G, \lambda _z)$, where 
$\lambda _z$ denotes left multiplication by $z$, and the dual exact chain complex, 
$(\bigwedge^{\bullet} H_1G, {}^{\sharp} \lambda _z)$. It is straightforward 
to check that the restriction of $ {}^{\sharp} \lambda _z$ to 
$\bigwedge ^{i} H_1G$ equals $\delta _i (z)$, for all $i$. 

We obtain from exactness the following isomorphism:
\[
\coker (\nabla (z)) \cong \im ( \delta _2(z))/ 
\im ( \delta _2(z)\circ \partial _G)\, .
\]
By exactness again,  $ \dim _{\C}\im ( \delta _2(z))-1$, where $n:=b_1(G)$. Hence 
$z \in \W_k (G)$ if and only if 
$\rank ( \delta _2(z)\circ \partial _G) \leq n-1-k$. 

The linear map dual to the restriction of $ {}^{\sharp} \lambda _z$ to 
$\bigwedge ^{2} H_1G$, $\delta _2(z)$,  is $\lambda _z \colon H^1 G \to \bigwedge^2 H^1 G$. 
By construction, the dual of $\partial_G$ is the cup--product $\cup_G \colon \bigwedge^2 H^1 G \to H^2 G$.
Hence, the linear map dual to $\delta_2 (z) \circ \partial_G$ is the map $\mu_z \colon H^1 G \to H^2 G$
from the definition of resonance \eqref{eq:defres}. Consequently, $z \in \W_k (G)$ if and only if
$\rank ( \mu_z) \leq n-1-k$, that is, if and only if $z \in \R_k(G)$; see again definition \eqref{eq:defres}.
\end{proof}

We may spell out our first general result relating resonance and finiteness.

\begin{theorem} \label{thm:resinf}
Let $G$ be a finitely generated group. Then $\R_1^1(G) \subseteq \{ 0\}$ if and only if
$\dim_{\C} \bb(G)< \infty$.
\end{theorem}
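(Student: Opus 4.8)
The plan is to exploit the finite $P$-presentation $\bb(G) = \coker(\nabla)$ from \cite[Theorem 6.2]{PS}, recorded in the proof of Lemma \ref{lem:infares}, together with the identification of the vanishing loci $\W_k(G) \setminus \{0\} = \R_k(G) \setminus \{0\}$. Since $\bb_{\bullet}(G)$ is a finitely generated, positively graded module over the polynomial ring $P_{\bullet} = \Sym(H_1G)$, the condition $\dim_{\C} \bb(G) < \infty$ is equivalent to the statement that the support $\supp \bb(G) \subseteq \Spec P = H^1G$ is contained in $\{0\}$, i.e.\ that the radical of the annihilator ideal $\ann_P \bb(G)$ equals the irrelevant maximal ideal (the one cutting out the origin). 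So the theorem amounts to showing: $\R^1_1(G) \subseteq \{0\}$ if and only if $V(\ann_P \bb(G)) \subseteq \{0\}$.

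The key step is to relate $V(\ann_P \bb(G))$ to the resonance variety $\R^1_1(G) = \R_1(G) = \W_1(G)$. The ``hard'' direction — that triviality of resonance forces finite-dimensionality — goes as follows. By definition $\W_1(G)$ is the zero set of the zeroth Fitting ideal $E_0(\bb(G))$ of the presentation $\nabla$. For a finitely presented module $M = \coker(\nabla)$ over a Noetherian ring $P$, one has the elementary containments $\ann_P(M) \supseteq E_0(M) \supseteq (\ann_P M)^N$ for a suitable $N$ (Fitting's lemma / the standard comparison between annihilators and Fitting ideals), hence $V(E_0(\bb(G))) = V(\ann_P \bb(G)) = \supp \bb(G)$ as sets. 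Therefore $\W_1(G) = \supp \bb(G)$. Combined with Lemma \ref{lem:infares} ($\W_1(G) \setminus \{0\} = \R_1(G) \setminus \{0\}$), we get $\supp \bb(G) \setminus \{0\} = \R^1_1(G) \setminus \{0\}$. If $\R^1_1(G) \subseteq \{0\}$, then $\supp \bb(G) \subseteq \{0\}$, so $\bb(G)$ is a finitely generated graded $P$-module supported only at the irrelevant ideal, hence $\dim_{\C} \bb(G) < \infty$ (a positively graded Noetherian module with $0$-dimensional support is finite-dimensional, since its Hilbert function has finite support). Conversely, if $\dim_{\C}\bb(G) < \infty$ then $\supp \bb(G) \subseteq \{0\}$, so $\R^1_1(G) \setminus \{0\} \subseteq \supp\bb(G) \setminus \{0\} = \emptyset$, giving $\R^1_1(G) \subseteq \{0\}$.

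The main obstacle — really the only nontrivial point — is the passage from the \emph{Fitting ideal} vanishing locus $\W_1(G) = V(E_0(\bb(G)))$ to the \emph{support} $\supp \bb(G) = V(\ann_P \bb(G))$; this is where one must invoke that for a finitely generated module over a Noetherian ring the Fitting ideal $E_0(M)$ and the annihilator $\ann(M)$ have the same radical. Once that commutative-algebra fact is in place, the rest is the bookkeeping above: $\bb(G)$ is finitely generated over $P$ (from the explicit finite presentation $\nabla$), positively graded (from \eqref{eq:binf}), so finite support over $\Spec P$ is equivalent to finite $\C$-dimension, and Lemma \ref{lem:infares} supplies the bridge to $\R^1_1(G)$. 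I would also note that the ambient $\{0\}$ in $H^1G$ is a single reduced point, so ``$\R^1_1(G) \subseteq \{0\}$'' and ``$\supp\bb(G) \subseteq \{0\}$'' are genuinely equivalent set-theoretic conditions and there is no subtlety with embedded structure at the origin affecting the dimension count.
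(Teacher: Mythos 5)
Your proof is correct and takes essentially the same route as the paper: both translate Lemma \ref{lem:infares} (together with the standard commutative-algebra fact that $E_0(\bb(G))$ and $\ann_P\bb(G)$ cut out the same variety, namely $\supp\bb(G)$) into the identity $\R^1_1(G)\setminus\{0\}=\supp\bb(G)\setminus\{0\}$, and then use that a finitely generated positively graded $P$-module has finite $\C$-dimension exactly when its support lies in the irrelevant maximal ideal. The paper phrases that last step via Hilbert's Nullstellensatz and ``degree inspection'' rather than the Hilbert-function language you use, but the content is identical.
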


\begin{proof}
Lemma \ref{lem:infares} yields in particular the equality
$\R_1^1(G)= \Zero (\ann \bb(G))$, away from the origin. 
This is a consequence of the fact that the ideals $E_{0}(\bb(G))$ and $\ann \bb(G)$ have the same radical;
see \cite[Proposition 20.7]{E}. Let $\m \subseteq P$ be the maximal ideal of $0\in H^1G$. 

If $\dim_{\C} \bb(G)< \infty$, then $\m^k \subseteq \ann \bb(G)$, for some $k$, by degree
inspection. Taking zero sets, we obtain that $\R_1^1(G) \subseteq \{ 0\}$.

Conversely, the assumption $\R_1^1(G) \subseteq \{ 0\}$ implies $\m \subseteq \sqrt{\ann \bb(G)}$,
by Hilbert's Nullstellensatz. Therefore, $\m^k \subseteq \ann \bb(G)$, for some $k$. Since
$\bb(G)$ is finitely generated over $P$, we infer that $\dim_{\C} \bb(G)< \infty$.
\end{proof}

Theorem \ref{thm:resinf} has several interesting consequences. To describe them, we first
recall a couple of notions from rational homotopy theory. A {\em Malcev Lie algebra} (in the 
sense of Quillen) is a Lie algebra endowed with a decreasing vector space filtration 
satisfying certain axioms; see \cite[Appendix A]{Q}, where Quillen associates in a functorial way
a Malcev Lie algebra, $\M(G)$, to an arbitrary group $G$. The construction goes as follows.
Start with the group ring $\C G$, and his standard Hopf algebra structure. The $I$-adic completion,
$\widehat{\C G}$, becomes a complete Hopf algebra. The Malcev Lie algebra $\M(G)$ consists of the
primitive elements of $\widehat{\C G}$, with Lie bracket given by the algebra commutator in  $\widehat{\C G}$,
and filtration induced by the completion filtration of $\widehat{\C G}$.

Following Sullivan \cite{Su},
we say that a finitely generated group $G$ is {\em $1$-formal} if $\M(G)$ is the completion
with respect to degree of a quadratic Lie algebra, as a filtered Lie algebra. In  Theorem 1.1 from
\cite{H}, Hain proves that the Torelli group $T_g$ is $1$-formal, for $g\ge 6$ (but $T_3$ is not $1$-formal). 
Fundamental groups of compact Kahler manifolds are $1$-formal, as shown by
Deligne, Griffiths, Morgan and Sullivan in \cite{DGMS}. Many other
interesting examples of $1$-formal groups are known; see e.g. \cite{DPS} and the references therein.

Next, we review the {\em Alexander invariant}, $B(G)= G'_{\ab} \otimes \C$, associated to a 
finitely generated group $G$. The exact sequence of groups
\begin{equation}
\label{eq:bgr}
1\to G'/G'' \to G/G'' \to G/G'\to 1
\end{equation}
may be used to put a finitely generated module structure on $B(G)$, induced by conjugation, over
the Noetherian group ring $\C G_{\ab}$. We denote by $I\subseteq \C G_{\ab}$ the augmentation ideal,
and by $\widehat{B}$ the $I$-adic completion of a $\C G_{\ab}$-module $B$. 

Given a graded vector space $\V_{\bullet}$, $\widehat{\V}_{\bullet}$ means completion 
with respect to the degree filtration: $\widehat{\V}_{\bullet}= \varprojlim_{q}\V/F_q$,
where $F_q= \V_{\ge q}$. The canonical filtration of $\widehat{\V}_{\bullet}$ is
$\widehat{F}_q= \ker(\pi_q)$, where $\pi_q: \widehat{\V}_{\bullet}\to \V/F_q$ is the $q$-th
projection of the inverse limit. The next corollary proves Theorem \ref{thm:aintro}\eqref{ai1}.

\begin{corollary}
\label{cor:resb}
Let $G$ be a finitely generated, $1$-formal group. Then $\R_1^1(G) \subseteq \{ 0\}$ if and only if
$\dim_{\C} \widehat{B(G)}< \infty$. In particular, $\R_1^1(G) \subseteq \{ 0\}$, when
$\dim_{\C} B(G)< \infty$.
\end{corollary}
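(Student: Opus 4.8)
The plan is to deduce Corollary~\ref{cor:resb} from Theorem~\ref{thm:resinf} by comparing the infinitesimal Alexander invariant $\bb_{\bullet}(G)$ with the $I$-adic completion $\widehat{B(G)}$ of the classical Alexander invariant. The link is $1$-formality. First I would recall that, for a finitely generated $1$-formal group $G$, the Malcev Lie algebra $\M(G)$ is the degree completion of the holonomy Lie algebra $\cH_{\bullet}(G)$, and consequently the associated graded of the $I$-adic filtration on $B(G)$ is identified, as a graded $P_{\bullet}$-module, with $\bb_{\bullet}(G)$ — this is the infinitesimal description of the Alexander invariant under formality, which I would cite from \cite{PS} (the relevant statement being that $1$-formality forces $\gr_{\bullet}(B(G), I\text{-adic}) \cong \bb_{\bullet}(G)$ compatibly with the $\gr_{\bullet}(\C G_{\ab}) = P_{\bullet}$ actions). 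Since $\widehat{B(G)}$ is by definition the completion of $B(G)$ with respect to the $I$-adic filtration, it has the same associated graded, namely $\bb_{\bullet}(G)$.

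The second step is a purely algebraic observation: a filtered module whose associated graded is finite-dimensional is itself finite-dimensional after completion, and conversely. Concretely, $\widehat{B(G)}$ is finite-dimensional over $\C$ if and only if $\bb_{\bullet}(G) = \gr_{\bullet}\widehat{B(G)}$ is finite-dimensional: one direction is immediate from $\dim_{\C}\widehat{B(G)} \ge \sum_q \dim_{\C}\bb_q(G)$ once the grading stabilizes, and the other direction follows because finiteness of the associated graded means the filtration $\widehat{F}_q$ stabilizes at $0$, so $\widehat{B(G)} = \widehat{B(G)}/\widehat{F}_q$ for $q \gg 0$, which has the same (finite) dimension as $\bigoplus_{j<q}\bb_j(G)$. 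Here I would want to be a little careful that the $I$-adic filtration on $B(G)$ is separated after completion and that $\bb_{\bullet}(G)$ being finitely generated over $P_{\bullet}$ (Theorem~6.2 of \cite{PS}) together with finite-dimensionality is equivalent to the grading being eventually zero — but this is the same degree-inspection argument already used in the proof of Theorem~\ref{thm:resinf}.

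Combining the two steps: $\R_1^1(G) \subseteq \{0\}$ iff $\dim_{\C}\bb(G) < \infty$ (Theorem~\ref{thm:resinf}) iff $\dim_{\C}\widehat{B(G)} < \infty$ (the comparison above). This proves the main assertion. For the final sentence, if $\dim_{\C} B(G) < \infty$ then the $I$-adic filtration on $B(G)$ is eventually zero, so $\widehat{B(G)} = B(G)$ is already finite-dimensional, whence $\R_1^1(G) \subseteq \{0\}$ by the equivalence just established; alternatively one notes $B(G)$ finite-dimensional forces $\bb(G)$ finite-dimensional directly, since $\bb_{\bullet}(G)$ is a subquotient of $\gr$ of $B(G)$.

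The main obstacle is the first step — pinning down precisely the statement that $1$-formality yields $\gr_{\bullet}^{I}B(G) \cong \bb_{\bullet}(G)$ as graded $P_{\bullet}$-modules, and citing it correctly from \cite{PS} (or assembling it from the facts there about Malcev Lie algebras, holonomy Lie algebras, and the presentation \eqref{eq:bpres} of $\bb_{\bullet}(G)$). Everything after that is formal bookkeeping about filtered modules and completions, of the same flavor as the Nullstellensatz/degree argument already in Theorem~\ref{thm:resinf}.
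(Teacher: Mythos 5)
Your proposal is correct and reaches the conclusion by essentially the same two-step strategy as the paper: convert the finiteness of $\widehat{B(G)}$ into finiteness of $\bb(G)$ using $1$-formality, then apply Theorem~\ref{thm:resinf}. The one substantive difference is the intermediate object you choose. You pass through the associated graded, claiming $\gr_I B(G)\cong \bb_{\bullet}(G)$ for $1$-formal groups and citing \cite{PS}; the paper instead cites Theorem~5.6 of \cite{DPS}, which gives directly a vector space isomorphism $\widehat{B(G)}\cong\widehat{\bb(G)}$ (the $\m$-adic, equivalently degree, completion of $\bb(G)$, since $\bb_{\bullet}(G)$ is generated in degree $0$). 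The completion-level statement is slightly lighter to use: it avoids the bookkeeping you rightly flag as needing care, namely that a complete, separated filtered vector space is finite-dimensional if and only if its associated graded is. Your version of that bookkeeping is sound — the $I$-adic completion is complete and separated, $\bb_{\bullet}(G)$ is finitely generated over $P_{\bullet}$ so finite-dimensionality is equivalent to eventual vanishing in degree, and then the filtration on $\widehat{B(G)}$ stabilizes at $0$ — but it is an extra layer the paper's route sidesteps. The only thing to tighten in your write-up is the citation in step one: the precise statement you want is Theorem~5.6 of \cite{DPS} (a completion isomorphism), rather than a $\gr$-level isomorphism extracted from \cite{PS}, though the latter does hold and can be assembled from the Chen Lie algebra results there together with \eqref{eq:mass}.
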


\begin{proof}
Let $\m \subseteq P$ be the maximal ideal of the origin. The $1$-formality of $G$ provides 
a vector space isomorphism between $\widehat{B(G)}$ and the $\m$-adic completion of $\bb(G)$,
cf. Theorem 5.6 from \cite{DPS}. Since $\bb_{\bullet}(G)$ is generated in degree $0$, its 
$\m$-adic completion coincides with the degree completion, $\widehat{\bb_{\bullet}(G)}$.
Clearly, $\bb_{\bullet}(G)$ and $\widehat{\bb_{\bullet}(G)}$ are simultaneously finite-dimensional.
Hence, our first claim follows from Theorem \ref{thm:resinf}. For the second claim, simply note 
that the completion of a finite-dimensional vector space is again finite-dimensional.
\end{proof}

\begin{remark}
\label{rk:anca}
The finitely generated group $G$ discussed in Example 6.4 from \cite{PS-bns} satisfies 
$\R_1^1(G) \subseteq \{ 0\}$, yet $\dim_{\C}B(G)=\infty$. It can be shown that $G$ is
$1$-formal. Consequently, $\R_1^1(G) \subseteq \{ 0\}$ is only a necessary condition for the
finite-dimensionality of $B(G)$, in general.

In the particular case when $G$ is nilpotent, the condition $\dim_{\C} B(G)< \infty$ is
automatically satisfied, since $G'$ is finitely generated. We recover in this way from 
Corollary \ref{cor:resb} the resonance obstruction to $1$-formality of finitely generated
nilpotent groups found by Carlson and Toledo in \cite[Lemma 2.4]{CT}. We refer the reader to
Macinic \cite{M}, for similar higher-degree obstructions to the formality of a finitely
generated nilpotent group.

As we shall see below, the main point in Corollary \ref{cor:resb} is the fact that the finite-dimensionality of
$\widehat{B(G)}$ forces $\R_1^1(G) \subseteq \{ 0\}$, when $G$ is $1$-formal. We point out that 
$1$-formality is needed for this implication. Indeed, let $G$ be the finitely generated nilpotent Heisenberg group with
$b_1(G)=2$. As noted before, $\dim_{\C} \widehat{B(G)}< \infty$. Yet, the resonance variety  $\R_1^1(G)= \C^2$
is non-trivial; see for instance \cite[Proposition 5.5]{M}.
\end{remark}

Let $G$ be an arbitrary finitely generated group, and $K\subseteq G$ a subgroup
containing $G'$. Clearly, $K$ is normal in $G$, and $G$-conjugation makes $H_1 K$ a finitely generated module
over the Noetherian group ring $\C [G/K]$. By restriction via the canonical epimorphism, $G/G' \surj G/K$, 
$H_1 K$ becomes a finitely generated $\C G_{\ab}$-module. We may now state
our next main result from this section, which proves Theorem \ref{thm:aintro}\eqref{ai2}.

\begin{theorem}
\label{thm:findex}
Let $G$ be a finitely generated group, and $K\subseteq G$ a subgroup
containing $G'$. If $\R_1^1(G) \subseteq \{ 0\}$,
the vector space $\widehat{H_1K}$ is finite-dimensional.
\end{theorem}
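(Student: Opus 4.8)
The plan is to reduce the statement about $\widehat{H_1K}$ for an arbitrary intermediate subgroup $G' \subseteq K \subseteq G$ to the already-proven case $K = G'$, namely Corollary~\ref{cor:resb} (equivalently Theorem~\ref{thm:resinf}, which gives $\dim_\C \bb(G) < \infty$ when $\R^1_1(G) \subseteq \{0\}$). First I would set $Q = G/K$, a quotient of $G_{\ab}$, so that $H_1 K$ is a finitely generated module over $\C Q$ and, by restriction along $\C G_{\ab} \twoheadrightarrow \C Q$, over $\C G_{\ab}$. The key observation is that $H_1 K$ is a quotient, as a $\C G_{\ab}$-module, of the classical Alexander invariant $B(G) = H_1 G' = G'_{\ab} \otimes \C$: the inclusion $G' \subseteq K$ induces a surjection $H_1 G' \to H_1 K$ which is $G$-equivariant, hence $\C G_{\ab}$-linear (both module structures come from $G$-conjugation).

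Next I would use that $I$-adic completion is right exact on finitely generated modules over the Noetherian ring $\C G_{\ab}$ (it is exact on the category of f.g.\ modules, since $\C G_{\ab}$ is Noetherian and completion is flat there, but right exactness is all that is needed). Applying $(-)\widehat{\phantom{x}}$ to the surjection $B(G) \twoheadrightarrow H_1 K$ yields a surjection $\widehat{B(G)} \twoheadrightarrow \widehat{H_1 K}$ of $\C G_{\ab}$-modules — here one must check that the two relevant $I$-adic topologies match up, i.e.\ that the image of the augmentation ideal $I \subseteq \C G_{\ab}$ generates the augmentation ideal of $\C Q$, which is immediate since $G_{\ab} \twoheadrightarrow Q$ is surjective, so completing $H_1 K$ as a $\C G_{\ab}$-module agrees with completing it as a $\C Q$-module. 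Now $\R^1_1(G) \subseteq \{0\}$ together with Corollary~\ref{cor:resb} (the $1$-formality hypothesis is \emph{not} available here, so I would instead route through Theorem~\ref{thm:resinf}: $\dim_\C \bb(G) < \infty$, hence $\m^k \subseteq \ann_P \bb(G)$, and via the comparison between the $I$-adic filtration on $B(G)$ and the $\m$-adic filtration on $\bb(G)$ — the statement of Theorem~5.6 in \cite{DPS} does require $1$-formality, so this routing needs care).

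Because of that last wrinkle, the cleaner route — and the one I would actually take — is: do not pass through $\bb(G)$ at all, but instead prove directly that $\R^1_1(G) \subseteq \{0\}$ forces $\dim_\C \widehat{H_1 K} < \infty$ by an elementary module-theoretic argument. The point is that $\widehat{H_1K}$ is a finitely generated module over $\widehat{\C Q}$, the $I_Q$-adic completion of $\C Q$, which is a complete Noetherian local ring (when $Q$ is finitely generated of rank $r$, $\widehat{\C Q} \cong \C[[x_1,\dots,x_r]]$ after a change of variables, since $\C Q = \C[t_1^{\pm},\dots,t_r^{\pm}]$ and $I_Q$ is generated by $t_i - 1$). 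So $\widehat{H_1 K}$ is finite-dimensional over $\C$ iff it is supported at the maximal ideal, iff it is annihilated by a power of $I_Q$. One then checks that the support of $\widehat{H_1 K}$ is governed by the support of $H_1 K$ near the trivial character, which in turn is contained in the support of $B(G) = H_1 G'$ near the trivial character, i.e.\ in $\V^1_1(G)$ localized at $\mathbf{1}$; and $\V^1_1(G)$ being a finite set near $\mathbf{1}$ (equivalently: $\mathbf{1}$ is isolated in it) is the exponential/tangent-cone translation of $\R^1_1(G) \subseteq \{0\}$, valid \emph{without formality} because we only need the tangent cone inclusion $TC_{\mathbf 1}\V^1_1(G) \subseteq \R^1_1(G)$, which holds for all finitely generated groups (Libgober).

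The main obstacle I expect is precisely this last comparison: cleanly establishing that $\R^1_1(G) \subseteq \{0\}$ implies that $H_1 K$ — and hence $\widehat{H_1 K}$ — is annihilated by a power of the augmentation ideal, without invoking $1$-formality. The honest, self-contained way to do it is to exploit that $H_1 K$ is a quotient of $H_1 G'$ as a $\C G_{\ab}$-module (the easy half above), reduce to $K = G'$, and for $K=G'$ argue at the level of the finite $P$-presentation \eqref{eq:bpres} of $\bb(G)$ together with the fact that the $I$-adic completion of $B(G)$ only sees the associated graded, which is a quotient of $\bb(G)$ — so $\dim_\C \bb(G) < \infty$ (Theorem~\ref{thm:resinf}) forces $\dim_\C \widehat{B(G)} < \infty$ directly, \emph{this implication not needing formality} (formality is only needed for the converse and for identifying the two completions on the nose). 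Granting $\dim_\C \widehat{B(G)} < \infty$, right-exactness of completion applied to $\widehat{B(G)} \twoheadrightarrow \widehat{H_1 K}$ finishes the proof.
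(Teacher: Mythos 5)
Your reduction from a general intermediate subgroup $K$ to the case $K=G'$ rests on the claim that the inclusion $G'\hookrightarrow K$ induces a \emph{surjection} $H_1G'\to H_1K$ of $\C G_{\ab}$-modules. That is false. Since $G'\subseteq K$, the quotient $A:=K/G'$ is abelian, and the five-term exact sequence of $1\to G'\to K\to A\to 1$ reads
\[
H_2A \longrightarrow (H_1G')_A \longrightarrow H_1K \longrightarrow H_1A \longrightarrow 0 ,
\]
so the image of $H_1G'\to H_1K$ lands in the kernel of $H_1K\twoheadrightarrow H_1A$; the map can be surjective only when $H_1A=0$, i.e.\ when $A$ is torsion. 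Already $K=G$ gives a counterexample: there the map $H_1G'\to H_1G$ is zero while $H_1G\neq 0$ in general. So your "easy half" is wrong, and the right-exactness-of-completion step has nothing to apply to.

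The fix is precisely what the sequence above suggests, and it is what the paper does: complete the right-exact sequence
\[
(H_1G')_A \longrightarrow H_1K \longrightarrow H_1A \longrightarrow 0
\]
of finitely generated $\C G_{\ab}$-modules. Completion over the Noetherian ring $\C G_{\ab}$ is exact on finitely generated modules, $(H_1G')_A$ is a quotient of $H_1G'$, and $H_1A$ is already finite-dimensional (as $A$ is a finitely generated abelian group); so $\widehat{H_1K}$ is sandwiched between a quotient of $\widehat{H_1G'}$ and a finite-dimensional space. This reduces everything to the case $K=G'$ without ever needing a surjection $H_1G'\to H_1K$.

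On the $K=G'$ case itself: after the detour through characteristic varieties and tangent cones (which you yourself flag as delicate and which would need a support-versus-jump-locus comparison and a Dwyer--Fried-type argument that the paper reserves for later), you eventually land on the correct mechanism, and it coincides with the paper's: Theorem~\ref{thm:resinf} gives $\dim_\C\bb(G)<\infty$; the holonomy surjection \eqref{eq:grhol} composed with $\cG_\bullet(G)\surj\cG_\bullet(G/G'')$ yields an epimorphism $\cH_\bullet(G)/\cH''_\bullet(G)\surj\cG_\bullet(G/G'')$; and Massey's isomorphism \eqref{eq:mass}, $I^q H_1G'/I^{q+1}H_1G'\simeq\cG_{q+2}(G/G'')$, shows that the $I$-adic associated graded of $B(G)$ is a quotient of $\bb(G)$, hence vanishes in high degree, so the $I$-adic filtration of $H_1G'$ stabilizes and $\widehat{H_1G'}$ is finite-dimensional. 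You are right that no $1$-formality is needed for this direction — formality enters only for the converse in Corollary~\ref{cor:resb}. So the core of your $K=G'$ argument is sound; the genuine gap is the false surjectivity claim in the reduction step.
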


\begin{proof}
We  first treat the particular case $K=G'$, where we know from Theorem \ref{thm:resinf} that
the vector space $ \cH_{\bullet}(G)/\cH''_{\bullet}(G)$ is finite-dimensional. The canonical graded
Lie algebra surjection, $\cG_{\bullet}(G) \surj \cG_{\bullet}(G/G'')$, composed with the epimorphism
\eqref{eq:grhol}, gives a graded Lie algebra surjection, $\cH_{\bullet}(G) \surj \cG_{\bullet}(G/G'')$,
that factors through an epimorphism
\begin{equation}
\label{eq:hepi}
\cH_{\bullet}(G)/ \cH''_{\bullet}(G) \surj \cG_{\bullet}(G/G'') \, .
\end{equation}
It follows from \eqref{eq:hepi} that $\cG_{\bullet}(G/G'')=0$, for $\bullet >>0$.

On the other hand, we have an isomorphism,
\begin{equation}
\label{eq:mass}
I^q \cdot H_1G' /I^{q+1} \cdot H_1G' \simeq \cG_{q+2}(G/G'') \, ,
\end{equation}
for $q\ge 0$; see \cite[pp.400--401]{Mas}. We infer from \eqref{eq:mass} that the $I$-adic filtration of $H_1G'$ 
stabilizes, for $q>>0$. Therefore, $\dim_{\C} \widehat{H_1G'}< \infty$, as asserted.

For the general case, consider the extension
\begin{equation}
\label{eq:kext}
1\to G' \rightarrow K \rightarrow A  \to 1\, ,
\end{equation}
where $A=K/G'$ is a finitely generated subgroup of $G_{\ab}$, and denote by $(H_1G')_A$ the co-invariants
of the $A$-module $H_1G'$, noting that the canonical projection, $H_1G' \surj  (H_1G')_A$, is $\C G_{\ab}$-linear.

The Hochschild-Serre spectral sequence of \eqref{eq:kext} with trivial $\C$-coefficients (see e.g. \cite[p.171]{B})
provides an exact sequence of finitely generated $\C G_{\ab}$-modules,
\begin{equation}
\label{eq:hshort}
(H_1G')_A \rightarrow H_1K \rightarrow H_1A  \to 0\, .
\end{equation}
By standard commutative algebra (see for instance \cite[Chapter 10]{AM}), the $I$-adic completion of 
\eqref{eq:hshort} is again exact. Since  $\dim_{\C} H_1 A< \infty$, our claim about $\widehat{H_1K}$
follows from the finite-dimensionality of $\widehat{H_1G'}$.
\end{proof}

The next corollary gives an application to Torelli groups. It follows from Theorems \ref{thm:rest} and \ref{thm:findex}.

\begin{corollary}
\label{cor:kt}
Let $K$ be a  subgroup of $T_g$ containing $T_g'$.
The $I$-adic completion of $H_1(K, \C)$ is
finite-dimensional, for $g\ge 4$, where $I\subseteq \C [(T_g)_{\ab}]$ is the
augmentation ideal of the group ring.
\end{corollary}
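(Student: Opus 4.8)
The plan is to observe that Corollary \ref{cor:kt} is nothing more than the specialization of Theorem \ref{thm:findex} to the group $G = T_g$, once the resonance hypothesis has been verified in this case. So the proof reduces to checking that $T_g$ meets the hypotheses of Theorem \ref{thm:findex}: namely, that $T_g$ is finitely generated and that $\R_1^1(T_g) \subseteq \{0\}$.

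First I would recall that $T_g$ is finitely generated for $g \ge 3$, by Johnson's theorem \cite{J1}, so this holds a fortiori for $g \ge 4$. Next, the key input is Theorem \ref{thm:rest}, which asserts precisely that $\R(T_g) = \R_1^1(T_g) = 0$ for $g \ge 4$; in particular $\R_1^1(T_g) \subseteq \{0\}$. (Here I am using the abbreviation $\R = \R_1$ fixed in Section \ref{sec:res}.)

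With these two facts in hand, I would apply Theorem \ref{thm:findex} directly with $G = T_g$ and the given subgroup $K$ with $T_g' \subseteq K \subseteq T_g$: it yields that $\widehat{H_1 K}$ is finite-dimensional over $\C$, where the completion is the $I$-adic one with respect to the augmentation ideal $I \subseteq \C[(T_g)_{\ab}]$. This is exactly the asserted conclusion.

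There is essentially no obstacle here — the corollary is a direct combination of Theorem \ref{thm:rest} (the representation-theoretic computation of the resonance variety, which is where all the real work lies) and Theorem \ref{thm:findex} (the general homological-algebra argument via the infinitesimal Alexander invariant and Massey's isomorphism \eqref{eq:mass}). The only thing to be slightly careful about is matching notation: that the $\C[(T_g)_{\ab}]$-module structure on $H_1(K,\C)$ used in Theorem \ref{thm:findex} is the one induced by $T_g$-conjugation, which is indeed the structure intended in the statement of the corollary.

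\begin{proof}
By Johnson's theorem \cite{J1}, the group $T_g$ is finitely generated for $g\ge 3$, hence in particular for $g\ge 4$. By Theorem \ref{thm:rest}, $\R_1^1(T_g)=\R(T_g)=0$ for $g\ge 4$, so that $\R_1^1(T_g)\subseteq \{0\}$. Applying Theorem \ref{thm:findex} to the group $G=T_g$ and the subgroup $K$ with $T_g'\subseteq K\subseteq T_g$, we conclude that $\widehat{H_1K}$ is finite-dimensional over $\C$, where the completion is taken with respect to the augmentation ideal $I\subseteq \C[(T_g)_{\ab}]$ and the module structure on $H_1(K,\C)$ is induced by $T_g$-conjugation, as in the statement.
\end{proof}
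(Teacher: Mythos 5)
Your proof is correct and matches the paper's argument exactly: the corollary is obtained by specializing Theorem \ref{thm:findex} to $G=T_g$, with finite generation supplied by Johnson \cite{J1} and the resonance hypothesis $\R_1^1(T_g)\subseteq\{0\}$ supplied by Theorem \ref{thm:rest}.
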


We will improve Corollary \ref{cor:kt} in the next section, when $K$ contains the Johnson kernel $K_g$.

\section{Characteristic varieties of Torelli groups} \label{sec:dsim}

Guided by the interplay between arithmetic and Torelli groups, coming from \eqref{eq:deftintro},
we will examine now groups whose characteristic varieties possess a natural discrete symmetry.
We will compute the (restricted) characteristic varieties of $T_g$, in degree $1$, when $g\ge 4$,
and deduce that $b_1(K_g)<\infty$.

Our setup in this section is the following. Let 
\begin{equation}
\label{eq:aext}
1\to T \rightarrow \G \stackrel{p}{\rightarrow} D\to 1
\end{equation}
be a group extension, where $T$ is finitely generated and $D$ is an arithmetic subgroup of 
a complex linear algebraic group $S$, defined over $\Q$, simple and 
with $\Q$-rank at least $1$. The motivating examples are the extensions \eqref{eq:deftintro}, 
for $g\ge 3$. Under the above assumptions on $D$, we recall that any finite index subgroup
$D_1\subseteq D$ is Zariski dense in $S$, as follows from Borel's density theorem;
see e.g. \cite[Corollary 5.16]{R2}.

We continue by reviewing a couple of relevant facts related to character tori and characteristic
varieties. Let $G$ be a group, with  character group $\T (G)=\Hom (G_{\ab}, \C^*)$. When $G$ is finitely generated, 
the character torus $\T (G)$ is a linear algebraic group, 
with coordinate ring the group algebra $\C G_{\ab}$. The connected component of 
$1\in \T(G)$ is $\T^0 (G)= \T(G_{\abf})$. 

For the beginning, we need no assumptions on the group extension \eqref{eq:aext}.
The natural $D$-representation in $T_{\ab}$ (respectively $T_{\abf}$) induced by conjugation
canonically extends to $\Z T_{\ab}$ and $\C T_{\ab}$ (respectively to $\Z T_{\abf}$ and $\C T_{\abf}$).
The corresponding left $D$-action on $\T (T)$, by group automorphisms, is denoted by
$d\cdot \rho$, for $d\in D$ and $\rho \in \T (T)$, and is defined by 
$d\cdot \rho (u)=\rho \circ d^{-1} (u)$, for $u\in T_{\ab}$. When $T$ is finitely generated,
the $D$--action on $\T (T)$ is algebraic.

The {\em characteristic varieties} of a group $G$, $\V^i_k (G)$,  are defined for (degree) $i\ge 0$ and
(depth) $k\ge 1$ by 
\begin{equation}
\label{eq:defchar}
\V^i_k (G)= \{ \rho\in \T (G) \mid \dim_{\C} H_i(G, \C_{\rho})\ge k \}\, . 
\end{equation}
Here $\C_{\rho}$ denotes the rank one complex $G$--module given by the change of rings
$\Z G\to \C$, corresponding to $\rho$. When $G$ is finitely generated, it is easy to check that
$\V^i_k (G)$ is Zariski closed in $\T (G)$, for $i\le 1$ and $k\ge 1$. We will be mainly
interested in the (restricted) characteristic variety (in degree $i=1$ and depth $k=1$) $\V (G):=\V^1_1 (G) \cap \T^0 (G)$ 
associated to a finitely generated group $G$. By the above discussion, the restricted (degree $1$) characteristic variety
$\V^1_k (G) \cap \T^0 (G)$ is Zariski closed in the affine connected torus $\T^0 (G)= (\C^*)^{b_1(G)}$, for all $k\ge 1$.

Our starting point in this section is the following.

\begin{lemma}
\label{prop:dsim}
Given an arbitrary group extension \eqref{eq:aext}, 
the characteristic varieties $\V^i_k (T)\subseteq \T (T)$ are $D$-invariant subsets,
for all $i\ge 0$ and $k\ge 1$.
\end{lemma}

\begin{proof}
For $\gamma \in \G$, denote $\gamma$-conjugation by $\iota_{\gamma}: T\isom T$.
For a  ring $R$, with group of units $R^{\times}$, and a group homomorphism,
$\chi: T\to R^{\times}$, the notation $R_{\chi}$ means the $\Z T$-module $R$ associated
to the change of rings $\chi: \Z T\to R$. Note that the pair 
$(\iota_{\gamma}, \id_R)\colon (T, R_{\chi\circ \iota_{\gamma}}) \isom (T, R_{\chi})$,
gives an isomorphism in the category of local systems; see e.g. \cite[III.8]{B}. Hence,
there is an induced isomorphism, $H_*(T, R_{\chi\circ \iota_{\gamma}}) \isom H_*(T, R_{\chi})$.
Our claim follows by taking $R=\C$, and inspecting definition \eqref{eq:defchar}.
\end{proof}

When $T$ is finitely generated, let us consider $\V^1_k (T)\cap \T^0(T)$. This leads us to look at a discrete group $D$
acting linearly on a free, finitely generated abelian group $L$, and examine the
$D$-invariant, Zariski closed subsets $W$ of $\T (L)= \Hom (L, \C^*)$. Besides the trivial case
$W=\T (L)$, we find a lot of $0$-dimensional examples, by taking $W$ to be the subgroup of
$m$-torsion elements of $\T (L)$. This raises a natural question: is there anything else?

To present a first answer, we
need the following notions. A {\em translated subgroup (torus)} is  a subset of 
the character torus $\T= \T (T)$
of the form $t\cdot \bS$, where $t\in \T$ and $\bS \subseteq \T$ is a closed (connected) 
algebraic subgroup. Note that the {\em direction} of the translated subgroup, $\bS$, is
uniquely determined by $t\cdot \bS$. A Zariski closed subset $W\subseteq \T$ is a 
{\em union of translated tori} if  each irreducible component of $W$ is a translated
torus; in other words, $W$ is a finite union of translated subgroups. 

\begin{lemma}
\label{prop:dchar}
Let $L$ be a $D$-module which is finitely generated and free as an abelian group. Assume that $D$ is an
arithmetic subgroup of a simple $\C$-linear algebraic group $S$ defined over $\Q$, with
$\Q-\rank (S)\ge 1$. Suppose also that the $D$-action on $L$ extends to an irreducible, 
rational $S$-representation in $L(\C):= L\otimes \C$. Let $W\subset \T (L)$ be a
$D$-invariant, Zariski closed, proper subset of $\T (L)$. If $W$ is a union of translated tori, then
$W$ is finite.
\end{lemma}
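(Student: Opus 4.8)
The plan is to reduce the statement to a finiteness assertion about the \emph{directions} of the translated tori appearing in $W$, and then to invoke M.\ Laurent's theorem on the structure of the Zariski closure of a set of points of a semiabelian variety lying on finitely many translated subgroups. First I would write $W=\bigcup_{j} t_j\cdot \bS_j$ as a finite union of translated subtori; since $W$ is proper, each direction $\bS_j$ is a proper closed connected subgroup of $\T(L)$. The key observation is that $D$ permutes the irreducible components of $W$, hence $D$ permutes the finite set of directions $\{\bS_1,\dots,\bS_r\}$; passing to a finite-index subgroup $D_1\subseteq D$ (still Zariski dense in $S$ by Borel's density theorem, as recalled after \eqref{eq:aext}), we may assume each $\bS_j$ is $D_1$-invariant. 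Now $\bS_j$ corresponds, via the exponential/character duality, to a $D_1$-invariant $\Q$-subspace of $L(\Q)$; since $D_1$ is Zariski dense in $S$ and the $S$-representation on $L(\C)$ is irreducible, this subspace is either $0$ or all of $L(\Q)$. A proper $\bS_j$ therefore has direction $\{0\}$, so every proper component of $W$ is a single translated \emph{point}, i.e.\ $t_j\in\T(L)$; thus $W$ (minus the possible component $\T(L)$, which is excluded since $W$ is proper) is a finite union of points together with, a priori, positive-dimensional behavior only if $\bS_j=\T(L)$ — which cannot happen for a proper component. Hence $W$ is finite.

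Wait — the subtlety is that ``union of translated tori'' need \emph{not} a priori force the components to be defined over $\Q$, nor is it obvious that $D_1$ acting on a translated point $t_j\cdot\{0\}$ forces finiteness of the orbit; a Zariski-dense $D_1$-orbit on $\T(L)$ could in principle be infinite. This is exactly where Laurent's theorem enters, and is the main obstacle. So the actual argument should run: let $Z\subseteq\T(L)$ be the union of the proper components of $W$. By the irreducibility argument above, each such component is a point, so $Z$ is a ($D_1$-invariant) subset of $\T(L)$ each of whose points lies on the trivial subgroup; more usefully, $Z$ is contained in a finite union of \emph{translated subgroups} of $\T(L)$ whose directions are all proper (indeed trivial). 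Laurent's theorem \cite{Lau} — the resolution of a conjecture of Lang, in the toric case — states that the Zariski closure of any subset of $\T(L)$ that is contained in a finite union of translated subgroups is itself a finite union of translated subgroups; combined with $D_1$-invariance and the Zariski density of $D_1$ in $S$, together with irreducibility of the $S$-action, any infinite such closure would have to contain a positive-dimensional $D_1$-invariant, hence $S$-invariant, translated subtorus with nontrivial direction, contradicting irreducibility of $L(\C)$ unless the direction is all of $\T(L)$. Since $W$ is proper this is impossible, so $Z$ is finite, and therefore $W=Z$ is finite.

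Concretely, the steps in order are: (1) decompose $W$ into finitely many translated subtori and record that $D$ permutes their directions; (2) pass to a finite-index $D_1\le D$ fixing each direction, noting $D_1$ is still Zariski dense in $S$; (3) use irreducibility of the rational $S$-representation $L(\C)$ to conclude every proper direction is trivial, so all proper components of $W$ are translated points; (4) apply Laurent's theorem to the $D_1$-invariant set of these points, which lies in a finite union of translated subgroups, to see its Zariski closure is again a finite union of translated subgroups; (5) use $S$-invariance of that closure plus irreducibility once more to rule out any positive-dimensional piece, forcing the closure — hence $W$ — to be finite. I expect step (4)–(5), i.e.\ extracting finiteness from Laurent's structure theorem in the presence of the arithmetic symmetry, to be the technical heart; steps (1)–(3) are formal consequences of Borel density and Schur-type irreducibility arguments already set up in Sections \ref{sec:csim}–\ref{sec:dsim}.
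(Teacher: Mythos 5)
Your first paragraph, through the line ``Hence $W$ is finite,'' is a correct proof and coincides with the one given in the paper: $D$ permutes the finitely many irreducible components of $W$, so each component $t\cdot\bS$ has a finite-index isotropy group $D_1\subseteq D$, which is still Zariski dense in $S$ by Borel density; $D_1$-invariance of $t\cdot\bS$ forces $\bS$ to be $D_1$-invariant, hence $T_1\bS\subseteq L(\C)^*$ is an $S$-invariant proper subspace, hence $0$ by irreducibility, hence $\dim\bS=0$. Since $W$, being Zariski closed, has only finitely many irreducible components and each is now a point, $W$ is finite. That is the end of the argument.

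The ``Wait'' paragraph and your steps (4)--(5) rest on a misconception and are unnecessary. Once you know $W$ has finitely many irreducible components (automatic, by Noetherianity of the Zariski topology) and each is $0$-dimensional, finiteness of $W$ is immediate --- there is nothing left to prove. The worry about a $D_1$-orbit of a point being Zariski dense or infinite is vacuous here: such an orbit is contained in the Zariski closed set $W$, which we have just shown to be a finite set of points; and in any case $D_1$ was taken as the isotropy group of the component, so it fixes the point outright. Laurent's theorem is not needed for Lemma \ref{prop:dchar}; you appear to have conflated it with Theorem \ref{thm:girr}, where the hypothesis that $W$ be a union of translated tori is \emph{dropped}, and Laurent's theorem on Lang's conjecture is invoked precisely to recover (locally, via the finitely generated subgroup $\OO_t$) the translated-torus structure that Lemma \ref{prop:dchar} assumes outright. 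The two results play complementary roles: Lemma \ref{prop:dchar} is the elementary representation-theoretic core, and Theorem \ref{thm:girr} uses Laurent to reduce the general case to it. Also, a minor point: all components of a proper $W$ are automatically proper, so you need not single out ``proper components.''
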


\begin{proof}
We know that each irreducible component of $W$ is of the form $t\cdot \bS$, as above. We have to
show that $\dim (\bS)=0$. To this end, we consider the isotropy group of $t\cdot \bS$, denoted $D_1$;
it is a finite index subgroup of $D$. Note that $\dim (\bS)< \dim (\T (L))$, since $W\ne \T (L)$.

The $D_1$-invariance of $t\cdot \bS$ forces the direction $\bS$ to be $D_1$-invariant as well. 
Therefore, the Lie algebra $T_1 \bS$ is a $D_1$-invariant linear subspace of 
$\Hom (L, \C)=L(\C)^*$. Since $D_1\subseteq S$ is Zariski dense, it follows that the subspace
$T_1 \bS$ is actually $S$-invariant. Hence, $T_1 \bS=0$, due to $S$-irreducibility. 
\end{proof}

To improve Lemma \ref{prop:dchar}, we need a preliminary result.

\begin{lemma}
\label{lem:prel}
Let $L$ be a $D$-module which is finitely generated and free as an abelian group. Then
the subgroup $\OO_t$ of $\T (L)$, generated by the $D$-orbit $D\cdot t$, is finitely generated,
for any $t\in \T (L)$.
\end{lemma}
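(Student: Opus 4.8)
The plan is to reduce the statement to the finite generation of the endomorphism ring action and the Noetherianity of the group algebra. First I would observe that, since $L$ is finitely generated and free as an abelian group, the character torus $\T(L)=\Hom(L,\C^*)$ is an affine algebraic torus of dimension $n:=\rank_{\Z}L$, whose group algebra over $\Z$ is $\Z L^\vee$ for $L^\vee$ the dual lattice; more to the point, $\T(L)$ is itself a divisible abelian group whose finitely generated subgroups are well understood. The $D$-action on $L$ is the restriction of a $\GL_n(\Z)$-action, which is the restriction of a $\GL_n(\Q)$-action; let $R\subseteq M_n(\Z)$ be the (unital) subring generated by the image of $D$ in $\mathrm{End}_{\Z}(L)$. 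Since $M_n(\Z)$ is a Noetherian ring (it is a finitely generated $\Z$-module), $R$ is a finitely generated $\Z$-algebra and in fact a finitely generated $\Z$-module, hence finitely generated as an abelian group. Pick abelian-group generators $\phi_1,\dots,\phi_m$ of $R$.

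Next I would fix $t\in\T(L)$ and consider the $R$-module generated by $t$ inside $\T(L)$, i.e. the subgroup $M_t:=\{\phi(t):\phi\in R\}$ (using that $\T(L)$ is a $\Z$-module on which $R$ acts, via $\phi\cdot t := t\circ \phi^{\mathrm{tr}}$ or the appropriate contragredient convention — the exact convention does not matter for the argument). Because $\T(L)$ is a $\Z$-module and $R=\Z\phi_1+\cdots+\Z\phi_m$ as abelian groups, the subgroup $M_t$ is generated, as an abelian group, by the finitely many elements $\phi_1(t),\dots,\phi_m(t)$. Finally, $D\cdot t\subseteq M_t$ since every $d\in D$ maps into $R$, so the subgroup $\OO_t$ generated by $D\cdot t$ is contained in the finitely generated abelian group $M_t$. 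A subgroup of a finitely generated abelian group is finitely generated (abelian groups over $\Z$ are Noetherian), so $\OO_t$ is finitely generated, as claimed.

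The only genuine content is the passage from "$D$ is an arbitrary, possibly infinitely generated, discrete group" to "the ring it generates in $\mathrm{End}_{\Z}(L)$ is a finitely generated abelian group": this is exactly where finiteness of the ambient matrix ring $M_n(\Z)$ over the Noetherian ring $\Z$ is used, and it is the step I expect a careful reader to want spelled out. Everything else — that $\T(L)$ carries a $\Z$-module (indeed $R$-module) structure compatible with the $D$-action, and that finitely generated abelian groups have finitely generated subgroups — is routine. I would therefore write the proof in the order: (1) set up the $R$-module structure on $\T(L)$ and record $D\cdot t\subseteq R\cdot t$; (2) show $R$ is finitely generated as an abelian group, citing Noetherianity of $M_n(\Z)$; (3) conclude that $R\cdot t$, hence $\OO_t$, is finitely generated.
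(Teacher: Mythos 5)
Your argument is correct and is essentially the paper's proof, recast structurally. The paper identifies $\T(L)$ with $(\C^*)^n$, exhibits the $n^2$ elements $(t^{u_1^{ij}},\dots,t^{u_n^{ij}})$ (which are exactly the images of $t$ under the matrix-unit basis of $M_n(\Z)$), and verifies by a direct multiplicative identity that every point of $D\cdot t$ is a $\Z$-linear combination of them; you package the same observation as ``$D\cdot t$ lies in the cyclic module $R\cdot t$ over the subring $R\subseteq M_n(\Z)$ generated by $D$, which is a finitely generated abelian group because $M_n(\Z)\cong\Z^{n^2}$ is a Noetherian $\Z$-module.'' The detour through the $D$-generated subring $R$ is superfluous (one may use $M_n(\Z)$ itself, as the paper implicitly does), and one throwaway remark is off: the coordinate ring of the algebraic torus $\T(L)$ is the group algebra $\C L$, not $\Z L^\vee$; but neither affects the substance of the argument.
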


\begin{proof}
Pick a $\Z$-basis $\{ e_i\}$ of $L$; identify $L$ with $\Z^n$, and $\T (L)$ with $(\C^*)^n$.
For $t=(t_1, \dots, t_n)\in \T (L)$ and $w= \sum_i w_i e_i \in L$, set 
$t^w:= \prod_i t_i^{w_i} \in \C^*$.

The elements of $D\cdot t$ are of the form $(t^{v_1}, \dots, t^{v_n})$, with
$v_i= \sum_j v_{ij} e_j \in L$. Define $u_k^{ij}= \delta_{ik} e_j \in L$, for
$1\le i,j,k \le n$. The equality
\[
\Big( t^{v_1}, \dots, t^{v_n}\Big) = \prod_{1\le i,j\le n} \Big( t^{u_1^{ij}}, \dots, t^{u_n^{ij}}\Big)^{v_{ij}}
\]
gives the desired conclusion.
\end{proof}

The key argument in a stronger version of Lemma \ref{prop:dchar} uses the solution of a conjecture of Lang 
in diophantine geometry, obtained by Laurent in \cite{Lau}. We recall this result. Let $W\subseteq \T (L)$ be 
Zariski closed, and let $\OO \subseteq \T (L)$ be a subgroup of finite rank. Denote by $\TT$ the set of translated subgroups
of $\T (L)$ of the form $\gamma \cdot \bS$, with $\gamma \in \OO$ and such that $\gamma \cdot \bS \subseteq W$,
ordered by inclusion. We extract from \cite{Lau}, Th\' eor\` eme 2, Lemme 3 and Lemme 4, the following result.

\begin{theorem}[Laurent]
\label{thm:langlau}
The set $\TT$ has finitely many maximal elements. Denote them $\gamma_1 \cdot \bS_1, \dots, \gamma_m \cdot \bS_m$.
Furthermore,
\[
W \cap \OO = \bigcup_{i=1}^m \gamma_i \cdot (\bS_i \cap \OO) \, .
\]
\end{theorem}

Now, we are going to show that the $D$-irreducibility of $L$ is inherited by the affine torus $\T (L)$,
in the sense explained below. This proves Theorem \ref{thm:bintro}. 

\begin{theorem}
\label{thm:girr}
Let $L$ be a $D$-module which is finitely generated and free as an abelian group. Assume that $D$ is an
arithmetic subgroup of a simple $\C$-linear algebraic group $S$ defined over $\Q$, with
$\Q-\rank (S)\ge 1$. Suppose also that the $D$-action on $L$ extends to an irreducible, 
rational $S$-representation in $L(\C):= L\otimes \C$. Let $W\subset \T (L)$ be a
$D$-invariant, Zariski closed, proper subset of $\T (L)$. Then $W$ is finite.
\end{theorem}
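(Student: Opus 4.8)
The plan is to reduce Theorem \ref{thm:girr} to Lemma \ref{prop:dchar} by removing the hypothesis that $W$ is a union of translated tori, using instead the structural result of Laurent \cite{Lau} on the Zariski closure of finitely generated subgroups of an algebraic torus. First I would observe that it suffices to treat the case where $W$ is irreducible; write $W$ for an irreducible component of the given closed set whose isotropy group $D_1 \subseteq D$ has finite index, so that $W$ is $D_1$-invariant and (by Borel density) $D_1$ is Zariski dense in $S$. The goal is to show $\dim W = 0$.

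Next I would pick a point $t \in W$ and consider the subgroup $\OO_t \subseteq \T(L)$ generated by the $D$-orbit $D\cdot t$; by Lemma \ref{lem:prel}, $\OO_t$ is finitely generated. Its Zariski closure $\overline{\OO_t}$ is a $D$-invariant closed subgroup of $\T(L)$. By Laurent's theorem, $\overline{\OO_t}$ is a finite union of cosets of algebraic subgroups of $\T(L)$ — equivalently, a union of translated subtori. Now $\overline{\OO_t}$ is $D$-invariant and closed, and there are two cases: either $\overline{\OO_t} = \T(L)$, or it is a proper closed $D$-invariant subset of $\T(L)$ which is a union of translated tori, to which Lemma \ref{prop:dchar} applies and forces $\overline{\OO_t}$ to be finite. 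In the latter case, $t$ lies in a finite $D$-invariant set, hence the $D$-orbit of $t$ is finite, $D_1$ fixes $t$, and since $D_1$ is Zariski dense in $S$ the fixed-point set $\T(L)^{D_1} = \T(L)^S$ is a closed subgroup whose Lie algebra $L(\C)^{*S} = 0$ by $S$-irreducibility (provided the representation is nontrivial, which it is since $\rank_\Q(S)\ge 1$ forces $\dim L(\C) > 0$ and irreducibility rules out the trivial module unless $S$ acts trivially, which contradicts simplicity); thus $\T(L)^{D_1}$ is finite and contains the irreducible set $W\ni t$ only if $\dim W = 0$. So in this case we are done.

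The remaining case is $\overline{\OO_t} = \T(L)$ for every $t \in W$, and this is where the main work lies. Here I would argue that this situation cannot persist for all points of a positive-dimensional $W$. The idea: if $\dim W \ge 1$, choose $t$ a very general (in particular non-torsion) point of $W$; the hypothesis $\overline{\OO_t} = \T(L)$ says the finitely generated group $\OO_t$ is Zariski dense in $\T(L)$, so it has rank at least $n = \dim \T(L)$. I would then invoke Laurent's theorem once more, or rather the fact that a $D$-invariant \emph{irreducible} subvariety $W$ passing through a point whose $D$-orbit Zariski-generates the whole torus must itself be the whole torus — contradicting properness. More precisely, one can run the argument the other way: apply Laurent to the subgroup generated by $W$ directly, or note that since $D$ acts on $W$ and the orbit closure of a general point is all of $\T(L)$, the subvariety $\bigcup_{d \in D} d\cdot W$ is Zariski dense; but this union lies in $W$ by $D$-invariance, so $\overline{W} = \T(L)$, contradicting that $W$ is proper and closed. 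Hence $\dim W = 0$ in this case as well.

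The hard part will be making the dichotomy clean — specifically, ensuring that the appeal to Laurent's theorem is applied to an object ($\overline{\OO_t}$) that is genuinely $D$-invariant and closed, and then checking that in the ``$\overline{\OO_t} = \T(L)$'' branch the $D$-invariance of the proper closed set $W$ really does force $\overline W = \T(L)$ (this uses that $D\cdot t$ is Zariski dense and $t \in W$, $W$ closed and $D$-invariant, so $\overline{D\cdot t} \subseteq W$). I expect that the authors streamline this by applying Lemma \ref{prop:dchar} to $\overline{\OO_t}$ itself: either it is all of $\T(L)$ and then $W$, being $D$-invariant, closed and containing $t$, contains $\overline{\OO_t} = \T(L)$, contradicting properness; or it is finite, and then the density-of-$D_1$ argument above gives $\dim W = 0$. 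That is the cleanest route, and I would present it in that order.
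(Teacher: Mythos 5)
Your proposal contains two genuine gaps, both stemming from a misreading of how Laurent's theorem needs to be deployed.

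The central error is in the branch ``$\overline{\OO_t}=\T(L)$.'' You conclude that $W$ must contain $\overline{\OO_t}$ and hence equal $\T(L)$. But $W$ is $D$-invariant and closed, so it contains $\overline{D\cdot t}$, \emph{not} $\overline{\OO_t}$: the group $\OO_t$ generated by the orbit is in general much larger than the orbit itself, and there is no reason for $W$ to be closed under the group operation of $\T(L)$. (Already on a one-dimensional torus, the orbit of a non-torsion $t$ under $\{\pm 1\}$ is a two-point set, while the group it generates is Zariski dense.) So the dichotomy you set up for $\overline{\OO_t}$ gives no direct information about $W$. The paper instead applies Laurent's Th\'eor\`eme~2 to the pair $(W,\OO_t)$: the intersection $W\cap\OO_t$ is covered by a \emph{finite} union $V$ of maximal translated subgroups that lie inside $W$ with origin in $\OO_t$. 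That union $V$ is $D$-invariant (since $W$ and $\OO_t$ are, and maximality is preserved), is a union of translated tori, and is proper since $V\subseteq W$; Lemma~\ref{prop:dchar} then makes $V$ finite, hence $D\cdot t\subseteq V$ is finite. So there is no dichotomy at all — every orbit is finite.

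The second gap is in the ``$\overline{\OO_t}$ finite'' branch. From $D\cdot t$ finite you jump to ``$D_1$ fixes $t$'' (where $D_1$ is the stabilizer of the component $W$; $D_1$ preserves $W$ setwise but does not fix its points), and then to ``$W\subseteq\T(L)^{D_1}$.'' Neither step is justified, and the conclusion would be false: different points of $W$ have different isotropy groups, so $W$ need not sit inside any one fixed-point set even if every orbit is finite. What one actually needs — and what the paper does — is the tangent-space argument: if $W$ were infinite, pick a smooth point $t\in W$ with $0\neq T_tW\neq T_t\T(L)$; the isotropy $D_t$ of $t$ has finite index (all orbits finite), is Zariski dense in $S$, and preserves the proper nontrivial subspace $T_tW$ translated to the origin in $L(\C)^*$, contradicting $S$-irreducibility. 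Your outline omits this step entirely, and it is the step that converts ``finite orbits'' into ``finite $W$.'' As a minor point, the preliminary reduction to an irreducible component with finite-index stabilizer $D_1$ is unnecessary once the argument is set up this way.
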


\begin{proof}
We first claim that $D\cdot t$ is finite, for every $t\in W$.  We will apply Theorem \ref{thm:langlau} to 
the closed subvariety $W\subseteq \T (L)$ and the subgroup $\OO_t \subseteq \T (L)$, which is
finitely generated, by Lemma \ref{lem:prel}. In this way, we obtain the inclusion
\begin{equation}
\label{eq:lang}
W\cap \OO_t \subseteq \bigcup_{i=1}^m \gamma_i \cdot \bS_i \, .
\end{equation}
Denote by $V\subseteq W$ the right-hand side of \eqref{eq:lang}. Since $\OO_t$ is $D$-invariant 
by construction, we infer that $V$ is $D$-invariant as well; this follows from the construction of the $D$--invariant
ordered set $\TT$, whose maximal elements are $\gamma_1 \cdot \bS_1, \dots, \gamma_m \cdot \bS_m$.
Hence, $V$ must be finite, by
Lemma \ref{prop:dchar}. Since clearly $D\cdot t\subseteq W\cap \OO_t$, we obtain our claim.

Supposing that $W$ is infinite, we may find a smooth point $t\in W$ whose tangent space satisfies
$0\ne T_t W \ne T_t \T (L)$. Translating to the origin, we obtain a $D_t$-invariant, proper and
non-trivial linear subspace of $L(\C)^*$, where $D_t$ is the isotropy group of $t$. By the
first step of our proof, $D_t$ has finite index in $D$, which contradicts the $S$-irreducibility
of $L(\C)$.
\end{proof}

We go on by establishing a relation between characteristic and resonance varieties in degree $1$,
for finitely generated groups.

\begin{lemma}
\label{lem:tgcone}
Let $G$ be a finitely generated group.
\begin{enumerate}
\item \label{tc1}
There is a finitely presented group $\overline{G}$, together with a group epimorphism,
$\phi: \overline{G}\surj G$, such that $\phi_{\ab}: \overline{G}_{\ab}\isom G_{\ab}$ is
an isomorphism, inducing identifications, $\V^1_k (\overline{G})\equiv \V^1_k(G)$ and
$\R^1_k (\overline{G})\equiv \R^1_k(G)$, for all $k\ge 1$.
\item \label{tc2}
The tangent cone at $1$ of $\V^1_k(G)$, $TC_1 \V^1_k(G)$, is contained in
$\R^1_k(G)$, for all $k\ge 1$.
\end{enumerate}
\end{lemma}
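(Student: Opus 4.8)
The statement to be proven is Lemma \ref{lem:tgcone}, which has two parts: the construction of a finitely presented approximation $\overline{G}$ of $G$ matching characteristic and resonance varieties, and the resulting inclusion of tangent cones $TC_1 \V^1_k(G) \subseteq \R^1_k(G)$. My plan is to dispatch part \eqref{tc1} by an explicit presentation-truncation argument, and then deduce part \eqref{tc2} by invoking a known tangent-cone inequality valid for finitely presented groups.

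For part \eqref{tc1}, the idea is as follows. Since $G$ is finitely generated, choose a finite generating set and a surjection $F \surj G$ from a finitely generated free group $F$; let $R \subseteq F$ be the (generally infinite) set of relators. Pick a classifying space $M = K(G,1)$ with finite $1$-skeleton $M^{(1)}$ (a wedge of circles indexed by the generators). Now I recall from the discussion preceding the lemma that for a complex $M$ with finite $1$-skeleton, both $\V^1_k(M)$ and $\R^1_k(M)$ depend only on $M^{(2)}$, the $2$-skeleton. More precisely, $\V^1_k(M)$ is computed from the chain complex $C_2 \to C_1 \to C_0$ with a local-system twist, and $\R^1_k(M)$ from $H^1 \wedge H^1 \to H^2$; attaching cells of dimension $\ge 3$ changes neither. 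But $M^{(2)}$ itself need not be finite. The key point is that $H^2(M,\C) = H^2(M^{(2)},\C)$ is finite-dimensional (indeed $H^1(G,\C)$ is finite-dimensional since $G_{\ab}$ is finitely generated, and $H^2(G,\C)$ is finite-dimensional — or at least the image of the cup product $\bigwedge^2 H^1 \to H^2$ lies in a finite-dimensional space). So one can select finitely many of the $2$-cells of $M^{(2)}$, say attached along words $r_1,\dots,r_m$, whose images span the relevant part of $H_2$; more carefully, one wants finitely many relators so that the induced map on the relevant homological invariants is already captured. Let $\overline{M}$ be the finite $2$-complex built from $M^{(1)}$ by attaching just these $2$-cells, set $\overline{G} = \pi_1(\overline{M})$, and let $\phi: \overline{G} \surj G$ be the obvious epimorphism (the extra relators of $G$ are consequences, but that is irrelevant — $\phi$ is surjective because $\overline{M}$ and $M$ share a $1$-skeleton). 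Then $\phi_{\ab}$ is an isomorphism because $H_1$ depends only on the $1$-skeleton... wait, not quite: $H_1(G)$ is the abelianized $1$-skeleton modulo the image of $\partial_2$, so I must also ensure the chosen relators generate the same submodule of $H_1(M^{(1)})$ as all relators do. This is fine: $\im \partial_2$ is a finitely generated subgroup of the finitely generated abelian group $C_1$, so finitely many relators suffice; enlarge the chosen set if necessary. The same finitely-generated-submodule argument, applied now to the comultiplication $H_2 \to \bigwedge^2 H_1$ and to the matrices defining $\V^1_k$, lets me enlarge the relator set finitely many more times so that $\V^1_k(\overline{G}) = \V^1_k(G)$ and $\R^1_k(\overline{G}) = \R^1_k(G)$ as subvarieties of the (common) $\T(G_{\ab})$ and $H^1G$. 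Since there are only countably many $k$ but the invariants stabilize, one can even do this uniformly, or simply note each equality holds.

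For part \eqref{tc2}, the inclusion $TC_1\V^1_k \subseteq \R^1_k$ is a theorem of Libgober (the "tangent cone theorem", also in work of Dimca--Papadima--Suciu) valid for finitely presented groups, or for spaces with finite $2$-skeleton: the exponential map $\C_\rho \leftrightarrow \rho = \exp$ identifies the Aomoto/Koszul complex computing $\R$ with the linearization at $1$ of the complex computing $\V$, and taking tangent cones of a jump locus of a family of matrices can only shrink (or preserve) the locus cut out by the linearized matrices. Having produced the finitely presented model $\overline{G}$ in part \eqref{tc1} with $TC_1 \V^1_k(\overline G) \subseteq \R^1_k(\overline G)$, the identifications of part \eqref{tc1} transport this to $TC_1 \V^1_k(G) \subseteq \R^1_k(G)$.

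**Main obstacle.** The delicate point is part \eqref{tc1}: one must check that \emph{finitely many} relators suffice to pin down all the invariants in question simultaneously — $H_1$, the comultiplication map $\partial_G: H_2 \to \bigwedge^2 H_1$ (equivalently the cup product, hence $\R^1_k$), and the presentation matrices whose minors define $\V^1_k$ (equivalently the Alexander module / Fox derivatives). Each of these is a Noetherianity argument (a subgroup of a finitely generated abelian group, resp. a submodule of a finitely generated module over the Noetherian ring $\Z G_{\ab}$, is finitely generated), so finitely many relators suffice for each; one then takes the union of these finite sets of relators. The bookkeeping — that the resulting $\overline G$ has genuinely isomorphic abelianization and that the equalities of varieties hold on the nose, not just up to the identification $\phi^*$ — is where the care is needed, but there is no serious difficulty, only attention to which skeleton each invariant depends on.
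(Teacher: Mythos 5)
Your proposal is correct and follows essentially the same route as the paper: use Noetherianity of $\Z G_{\ab}$ to pick finitely many $2$-cells generating $\im(D_2)$ (controlling $H_1$ and the characteristic varieties), add finitely many more $2$-cells so that the comultiplication $\partial: H_2 \to \bigwedge^2 H_1$ has the same image over $\C$ (controlling resonance, which depends only on the co-restriction of $\cup$ to its image), build $\overline G = \pi_1(Y)$ from the resulting finite $2$-complex, and then deduce part (2) from Libgober's tangent-cone theorem applied to $\overline G$. The one loose remark — that $H^2(G,\C)$ is finite-dimensional, which need not hold for merely finitely generated $G$ — is immediately corrected by your hedge that only the finite-dimensional target $\bigwedge^2 H^1$ matters, so nothing goes wrong.
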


\begin{proof}
Part \eqref{tc1}. Let $X$ be a classifying space for $G$, having $1$-skeleton equal to a
finite wedge of circles. Denote by 
$\{ \Z G_{\ab} \otimes C_{\bullet} X \stackrel{D_{\bullet}}{\rightarrow}\Z G_{\ab} \otimes C_{\bullet -1} X\}$
the cellular chain complex of the universal abelian cover $X^{\ab}$, where
$\{ C_{\bullet} X \stackrel{d_{\bullet}}{\rightarrow} C_{\bullet -1} X\}$ is
the cellular chain complex of $X$, over $\Z$. Since the ring $\Z G_{\ab}$ is noetherian,
$\im (D_2)$ is generated over $\Z G_{\ab}$ by the images of finitely many $2$-cells,
say $\{ e_1, \dots, e_r \}$. 

Consider now the comultiplication map of the $2$--skeleton, 
$\partial_{X^{(2)}}: H_2(X^{(2)}, \C) \to \wedge^2 H_1(X^{(2)}, \C)$. Pick finitely many $2$-cells, 
say $\{ e'_1, \dots, e'_s \}$, such that the $\partial$-images of the $d_{\bullet}$-cycles 
belonging to $\Z- \spn \{ e'_1, \dots, e'_s \}$ generate $\im (\partial_{X^{(2)}})$ over $\C$.

Let $Y$ be the finite subcomplex of $X^{(2)}$ obtained from $X^{(1)}$ by attaching the cells 
$\{ e_i\}$ and $\{ e'_j \}$. Set $\overline{G}:= \pi_1(Y)$. Attach cells of dimension at least $3$
to $Y$, in order to obtain a classifying space for $\overline{G}$, denoted $\overline{X}$.
Extend the inclusion $Y\hookrightarrow X^{(2)}$ to a map $\overline{X}\to X$, and consider the induced
group epimorphism, $\phi: \overline{G}\surj G$.

By our choice of the $e$-cells, we infer that $\phi$ induces in turn an isomorphism,
$H_1(\overline{G}, R_{\chi\circ \phi})\isom H_1(G, R_{\chi})$, for any group homomorphism,
$\chi:G \to R^{\times}$, when the ring $R$ is commutative. In particular, our claims on $\phi_{\ab}$
and characteristic varieties are thus verified. To check the claim on resonance varieties,
we may replace $\overline{G}$ by $Y$ and $G$ by $X^{(2)}$. From our choice for the $e'$-cells,
we deduce that, upon identifying $\wedge^2 H_1(Y, \C)$ and $\wedge^2 H_1(X^{(2)}, \C)$ 
via $\phi$, we have the equality $\im (\partial_{Y})= \im (\partial_{X^{(2)}})$. By duality,
the cup-product maps $\cup_Y$ and $\cup_{X^{(2)}}$ have isomorphic co-restrictions to the
image, which proves our last claim.

Part \eqref{tc2}. A result of Libgober from \cite{Lib} implies that $TC_1 \V^1_k(G)\subseteq \R^1_k (G)$,
for all $k\ge 1$, when $G$ is finitely presented. By Part \eqref{tc1}, the inclusion still holds for
finitely generated groups.
\end{proof}

Here is our main result in this section, relating arithmetic symmetry and finiteness properties,
which proves Theorem \ref{thm:cintro}. The {\em Alexander polynomial} $\Delta^T$ of a
finitely generated group $T$ appearing in Part \eqref{ta} below is a classical invariant, with roots in
knot theory, defined as follows. Let $A(T)= \Z T_{\abf} \otimes_{\Z T}I$ be the Alexander module.
It is a finitely generated $\Z T_{\abf}$--module, with Fitting ideal $E_1(A(T)) \subseteq \Z T_{\abf}$.
The greatest common divisor of all elements in $E_1(A(T))$, defined up to units of $\Z T_{\abf}$,
is $\Delta^T$.

\begin{corollary}
\label{cor:qkchar}
Assume in \eqref{eq:aext} that $T$ is finitely generated with $\R (T) \subseteq \{ 0\}$, and
$D\subseteq S$ is arithmetic, where the $\C$-linear algebraic group $S$ 
is defined over $\Q$, simple,
with $\Q-\rank (S)\ge 1$. Suppose moreover that the canonical $D$-representation in $T_{\abf}$ 
extends to an irreducible, rational $S$-representation in $T_{\abf}\otimes \C$.
Then the following hold.
\begin{enumerate}
\item \label{tv}
The restricted characteristic varieties $\V^1_k (T)\cap \T^0 (T)$ are finite, for all $k\ge 1$.
\item \label{ti}
The first Betti number of $\pi^{-1}(A)$ is finite, for any subgroup $A\subseteq T_{\abf}$,
where $\pi: T\surj T_{\abf}$ is the canonical projection.
\item \label{ta}
If moreover $b_1(T)>1$, the Alexander polynomial $\Delta^T$ is a non-zero constant $c\in \Z$,
modulo the units of $\Z T_{\abf}$.
\end{enumerate}
\end{corollary}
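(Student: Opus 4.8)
The plan is to combine the arithmetic symmetry of the characteristic varieties (Lemma \ref{prop:dsim}) with the geometric $D$-irreducibility of $\T(T_{\abf})$ (Theorem \ref{thm:girr}), using the resonance hypothesis $\R(T)\subseteq\{0\}$ to control the picture near the identity. First I would prove \eqref{tv}. Fix $k\ge 1$ and set $W_k:=\V^1_k(T)\cap\T^0(T)$. By Lemma \ref{prop:dsim}, each $\V^1_k(T)$ is $D$-invariant in $\T(T)$; since the $D$-action preserves the torsion subgroup of $T_{\ab}$, it also preserves the connected component $\T^0(T)=\T(T_{\abf})$, so $W_k$ is a $D$-invariant Zariski closed subset of $\T(T_{\abf})$. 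Now I invoke Theorem \ref{thm:girr} with $L=T_{\abf}$: either $W_k=\T(T_{\abf})$ or $W_k$ is finite. To rule out the first alternative when $b_1(T)\ge 1$, I would argue that $\V^1_k(T)=\T(T)$ is impossible for a finitely generated group: indeed $\V^1_k(T)\subseteq\V^1_1(T)$, and $1\in\V^1_1(T)$ iff $b_1(T)>0$, while more sharply $TC_1\V^1_k(T)\subseteq\R^1_k(T)\subseteq\R^1_1(T)\subseteq\{0\}$ by Lemma \ref{lem:tgcone}\eqref{tc2} and the hypothesis; if $W_k$ were all of $\T(T_{\abf})$ its tangent cone at $1$ would be all of $H^1(T_{\abf},\C)=H^1(T,\C)$, forcing $\R^1_k(T)$ (hence $\R^1_1(T)$) to be nonzero once $b_1(T)\ge 1$, a contradiction. (If $b_1(T)=0$ the statement is vacuous.) Hence $W_k$ is finite, proving \eqref{tv}.

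Next, \eqref{ti}: given $A\subseteq T_{\abf}$, let $N:=\pi^{-1}(A)$ and let $\overline{A}\subseteq T_{\abf}$ be the saturation of $A$ (so $T_{\abf}/\overline A$ is free abelian of some rank $r$); replacing $A$ by $\overline A$ only enlarges $N$ by finite index on the abelianization side and does not decrease $b_1$, so it suffices to treat normal subgroups $N\supseteq\ker\pi$ with $T/N$ free abelian. For such $N$ the Dwyer--Fried theory, in the refined form of \cite{DF, PS-bns} recalled in the introduction, says that $b_1(N)<\infty$ if and only if the intersection of $\V^1_1(T)$ with the subtorus $\T_N\subseteq\T^0(T)$ dual to the quotient $T\surj T/N$ is finite. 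But $\T_N\subseteq\T^0(T)$, so $\V^1_1(T)\cap\T_N\subseteq\V^1_1(T)\cap\T^0(T)$, which is finite by \eqref{tv}. This gives $b_1(N)<\infty$, hence \eqref{ti}.

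Finally, \eqref{ta}: the Alexander polynomial $\Delta^T\in\Z T_{\abf}$ is, up to units, the greatest common divisor of the codimension-one minors of a presentation matrix of the Alexander module, and its zero locus in $\T(T_{\abf})=\T^0(T)$ is the union of the codimension-$\le 1$ components of $\V^1_1(T)\cap\T^0(T)$ (one must be slightly careful when $b_1(T)=2$, as $\T^0(T)$ is then a surface and $\Delta^T=0$ would force a divisorial component); by \eqref{tv} this locus is finite, so when $b_1(T)>1$ the polynomial $\Delta^T$ vanishes on a finite set in a torus of dimension $\ge 2$, which forces $\Delta^T$ to be a nonzero constant modulo units — here nonzero because the Alexander module is a torsion $\Z T_{\abf}$-module (again using $b_1(T)\ge 1$ and finite generation of $T$, via the standard Fox-calculus presentation), and the constant lies in $\Z$ since $\Delta^T\in\Z T_{\abf}$ and the only units of $\Z T_{\abf}$ that are constants are $\pm 1$, so $c\in\Z$. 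The main obstacle I expect is \eqref{ta}: pinning down exactly which components of $\V^1_1(T)\cap\T^0(T)$ the zero set of $\Delta^T$ detects, and handling the boundary case $b_1(T)=2$ correctly, requires the precise dictionary between Alexander polynomials and characteristic varieties (Fitting ideals, support of the Alexander module) rather than the soft arguments that suffice for \eqref{tv} and \eqref{ti}.
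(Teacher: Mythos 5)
Your argument for Part \eqref{tv} coincides with the paper's: $D$-invariance of $W_k := \V^1_k(T)\cap\T^0(T)$ via Lemma \ref{prop:dsim}, then Theorem \ref{thm:girr}, with the alternative $W_k=\T^0(T)$ ruled out by the tangent-cone inclusion of Lemma \ref{lem:tgcone}\eqref{tc2} and the hypothesis $\R(T)\subseteq\{0\}$.

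Part \eqref{ti} has a genuine gap: your reduction is in the wrong direction. You pass from $N=\pi^{-1}(A)$ to the finite-index \emph{over}group $\overline N=\pi^{-1}(\overline A)$, asserting that this "does not decrease $b_1$." But for a finite-index inclusion $N\subseteq\overline N$, the transfer and the five-term sequence give a surjection $H_1(N,\Q)\surj H_1(\overline N,\Q)$, so the inequality goes the other way, $b_1(\overline N)\le b_1(N)$; and since $\overline N$ is typically \emph{not} finitely generated here, one can have $b_1(\overline N)<\infty$ while $b_1(N)=\infty$. Thus finiteness of $b_1(\overline N)$ gives no control on $b_1(N)$, and the reduction to the free-abelian-quotient case fails. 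The paper's route is different and avoids this: it invokes Dwyer--Fried (\cite[Corollary~6.2]{PS-bns}) exactly once, for $K:=\ker\pi$ (i.e.\ $A=0$, for which $T/K=T_{\abf}$ is already free abelian), deducing $b_1(K)<\infty$ from Part \eqref{tv}; then, for arbitrary $A$, the extension $1\to K\to\pi^{-1}(A)\to A\to 1$ and the Hochschild--Serre five-term exact sequence yield $b_1(\pi^{-1}(A))\le b_1(K)+b_1(A)<\infty$.

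For Part \eqref{ta}, the paper simply cites \cite[Corollary~3.2]{PS-cod}, which derives the conclusion from Part \eqref{tv}. Your outline points in the right direction, but the subclaim that the Alexander module is automatically torsion over $\Z T_{\abf}$ "using $b_1(T)\ge 1$ and finite generation of $T$" is not a freestanding fact: a free group of rank two is finitely generated with $b_1=2$, yet $\V^1_1=\T$ is the whole torus and the Alexander module is not torsion. The nonvanishing of $\Delta^T$ must itself be extracted from the finiteness established in Part \eqref{tv} (as the cited corollary does), not assumed independently.
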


\begin{proof}
We may clearly assume $b_1(T)\ne 0$. 
We want to use Theorem \ref{thm:girr}, applied to the $D$-module $L=T_{\abf}$ and the
closed subvariety $W=\V^1_k (T)\cap \T^0 (T)$. The $D$-invariance of $W$ follows from 
Lemma \ref{prop:dsim}. The fact that $W\ne \T (L)$ is a consequence of 
our assumption on $\R (T)$, due to Lemma \ref{lem:tgcone}\eqref{tc2}. Hence,
Part \eqref{tv} follows from Theorem \ref{thm:girr}.

Set $K:= \ker (\pi)$. 
A basic result of Dwyer and Fried \cite{DF}, as refined in
\cite[Corollary 6.2]{PS-bns}, says that the finiteness of $\V_1^1 (T) \cap \T^0(T)$ is
equivalent to $\dim_{\C} (K_{\ab})\otimes \C <\infty$, for any finitely generated group $T$.
Consider now the extension $1\to K\to \pi^{-1}(A)\to A\to 1$. A standard application of the
Hochschild-Serre spectral sequence  shows that the first Betti
number of $\pi^{-1}(A)$ is finite, since both $b_1(K)$ and $b_1(A)$ are finite. This
completes the proof of Part \eqref{ti}.

Part \eqref{ta} follows from Part \eqref{tv}, via Corollary 3.2 from \cite{PS-cod}.
\end{proof}

Corollary \ref{cor:qkchar} leads to the following consequences, for Torelli groups.

\begin{corollary}
\label{cor:tor}
Assume $g\ge 4$.
\begin{enumerate}
\item \label{torv}
The intersection $\V^1_k (T_g)\cap \T^0 (T_g)$ is finite, for all $k\ge 1$.
\item \label{tork}
The vector space $H_1(N, \C)$ is finite-dimensional, for any subgroup $N$ of $T_g$
containing the Johnson kernel $K_g$. 
\item \label{tora}
The Alexander polynomial $\Delta^{T_g}$ is a non-zero constant $c\in \Z$, modulo units. 
\end{enumerate}
\end{corollary}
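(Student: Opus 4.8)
The plan is to reduce Corollary \ref{cor:tor} to Corollary \ref{cor:qkchar}, applied to the extension \eqref{eq:deft} for $g\ge 4$. First I would check that all the hypotheses of Corollary \ref{cor:qkchar} are verified in this case: the kernel $T=T_g$ is finitely generated by Johnson's theorem \cite{J1} (since $g\ge 3$); the resonance vanishes, $\R (T_g)=\{0\}$, by Theorem \ref{thm:rest} (this is exactly where $g\ge 4$ is needed); the group $D=Sp_g(\Z)$ is an arithmetic subgroup of $S=Sp_g(\C)$, which is a $\C$-linear algebraic group defined over $\Q$, simple, with $\Q$-rank $g\ge 1$; and the canonical $Sp_g(\Z)$-representation in $(T_g)_{\abf}$ extends to an irreducible rational $Sp_g(\C)$-representation on $(T_g)_{\abf}\otimes \C$, namely $V(\lambda_3)$, by Theorem \ref{thm:john} and the subsequent discussion in Section \ref{sec:csim}.

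Granting this, Part \eqref{torv} is immediate from Corollary \ref{cor:qkchar}\eqref{tv}, and Part \eqref{tora} follows from Corollary \ref{cor:qkchar}\eqref{ta}, provided $b_1(T_g)>1$; this last inequality holds for $g\ge 4$ since $(T_g)_{\abf}\otimes \C = V(\lambda_3)$ has large dimension (in any case $\ge 2$), again by Theorem \ref{thm:john}. For Part \eqref{tork}, the key identification is Johnson's result recalled at the end of Section \ref{sec:resfin}: the Johnson kernel $K_g$ equals $\ker(J)$, and under the isomorphism $J\colon (T_g)_{\abf}\isom L$ of Theorem \ref{thm:john}, this kernel coincides with the kernel of the canonical projection $\pi\colon T_g \surj (T_g)_{\abf}$. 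Thus, for any subgroup $N$ with $K_g\subseteq N\subseteq T_g$, we have $N=\pi^{-1}(A)$ where $A=\pi(N)\subseteq (T_g)_{\abf}$ is a subgroup of a finitely generated free abelian group, hence finitely generated. Corollary \ref{cor:qkchar}\eqref{ti} then gives $b_1(N)<\infty$, i.e. $\dim_{\C} H_1(N,\C)<\infty$, as asserted.

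I expect no serious obstacle here: the corollary is essentially a specialization of the already-proved Corollary \ref{cor:qkchar}, and the only real content is matching up hypotheses. The one point requiring a little care is the identification $K_g = \ker(\pi)$ in Part \eqref{tork} — one must be careful that Johnson's geometric kernel (generated by bounding-curve Dehn twists) is literally the kernel of $T_g \surj (T_g)_{\abf}$ and not merely of $T_g \to L$ with $L$ having extra torsion; but this is exactly what Theorem \ref{thm:john} provides, since $J$ induces an isomorphism onto $L$ after killing torsion, so $\ker(J)$ is the preimage of the torsion subgroup under $T_g \to (T_g)_{\ab}$, which is the kernel of $\pi\colon T_g\surj (T_g)_{\abf}$. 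One should also note that $(T_g)_{\ab}$ itself has $2$-torsion \cite{J3}, which is why one works with $(T_g)_{\abf}$ throughout and why the argument controls $b_1$ rather than giving finite generation of $H_1$ of the uncompleted objects.
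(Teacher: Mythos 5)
Your proposal is correct and follows exactly the same route as the paper: specialize Corollary \ref{cor:qkchar} to the extension \eqref{eq:deft}, using Theorem \ref{thm:rest} for the resonance hypothesis (the only place $g\ge 4$ enters) and the Johnson-theoretic identification $K_g=\ker\bigl(T_g\surj (T_g)_{\abf}\bigr)$ for part (2). The paper's own proof is just a terser version of yours, observing additionally that $b_1(T_g)\ge 14$ for $g\ge 3$; your weaker bound $b_1(T_g)\ge 2$ suffices.
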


\begin{proof}
The condition on resonance is guaranteed by Theorem \ref{thm:rest}. Since $b_1(T_g)$ is
an increasing function of $g$, for $g\ge 3$, $b_1(T_g)\ge 14$, when $g\ge 3$. 
\end{proof}

\section{Sigma-invariants and the Kahler property} \label{sec:rsigma}

We close by examining groups with natural arithmetic symmetry, at the level of 
Bieri-Neumann-Strebel-Renz invariants. We first review briefly the definitions and
the main properties; for more details and references, see e.g. \cite{PS-bns}.

We start with the {\em Novikov-Sikorav completion} of a finitely generated group $G$, 
with respect to $\chi\in \Hom(G, \RR)$, denoted $\widehat{\Z G}_{-\chi}$. For
$k\in \Z$, let $F_k$ be the abelian subgroup of $\Z G$ generated by the elements
$g\in G$ with $\chi (g)\ge k$. The completion, $\widehat{\Z G}_{-\chi}$, of $\Z G$
with respect to the decreasing filtration $\{ F_k\}_{k\in \Z}$ becomes in a natural way
a ring, containing the group ring $\Z G$. 

The {\em Sigma-invariants} $\Sigma^q(G, \Z)$ are defined for $q\ge 1$ by
\begin{equation}
\label{eq:defsigma}
\Sigma^q(G, \Z)= \{ 0\ne \chi\in H^1(G, \RR)\mid H_{i}(G, \widehat{\Z G}_{-\chi})=0 \, , \forall i\le q \}\, .
\end{equation}
When the group $G$ is of type $FP_k$, the above definition coincides with the one
introduced by Bieri and Renz in \cite{BR}, for $q\le k$. Note that property $FP_1$
simply means finite generation of $G$. The Sigma-invariant $\Sigma^1(G, \Z)$ of a
finitely generated group $G$, denoted $\Sigma (G)$, coincides with the one defined by
Bieri, Neumann and Strebel in \cite{BNS}.

It turns out that $\Sigma^q(G, \Z)$ is an open (possibly empty) conical subset of
$H^1(G, \RR)$, for all $q\le k$, when $G$ is of type $FP_k$. Moreover, in this case 
we have the following fundamental property. Given a group epimorphism, $\nu: G\surj L$, onto
an abelian group, $\ker (\nu)$ is of type $FP_q$, with $q\le k$, if and only if
\begin{equation}
\label{eq:ftest}
\nu^*(H^1(L, \RR)\setminus \{ 0\})\subseteq \Sigma^q(G, \Z)\, .
\end{equation}

Here is the analog of Lemma \ref{prop:dsim} for Sigma-invariants.

\begin{lemma}
\label{prop:rsim}
Let $p:\G \surj D$ be a group epimorphism, with finitely generated kernel $T$. Then
$\Sigma^q(T, \Z)$ is invariant under the canonical action of $D$ on $H^1(T, \RR)$, 
coming from \eqref{eq:aext}, for all $q\ge 1$.
\end{lemma}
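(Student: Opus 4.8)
The statement asserts that the Sigma-invariant $\Sigma^q(T,\Z)$ is invariant under the $D$-action on $H^1(T,\RR)$, and the plan is to mirror exactly the argument of Lemma \ref{prop:dsim}. The key observation is that for any $\gamma\in\G$, conjugation $\iota_\gamma\colon T\isom T$ is a group automorphism, and the action of $D$ on $H^1(T,\RR)$ factors through the $\out(T)$-action induced by these conjugations. So it suffices to show that $\Sigma^q(T,\Z)$ is invariant under pullback by an arbitrary automorphism $\iota\colon T\isom T$ arising as conjugation by an element of $\G$; actually the cleanest route is to show invariance under \emph{every} automorphism of $T$, since this is automatic from the functoriality of the whole construction.

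\textbf{Key steps.} First I would fix $\gamma\in\G$ and set $\iota=\iota_\gamma\colon T\isom T$, with $\iota^*\colon H^1(T,\RR)\isom H^1(T,\RR)$ the induced map (which is precisely the inverse of the $D$-action of $p(\gamma)$, up to the usual left/right bookkeeping). Given $\chi\in\Sigma^q(T,\Z)$, I want to show $\iota^*\chi\in\Sigma^q(T,\Z)$, i.e. $H_{\le q}(T,\widehat{\Z T}_{-\iota^*\chi})=0$. Second, I would observe that $\iota$ carries the filtration $\{F_k\}$ of $\Z T$ defining $\widehat{\Z T}_{-\iota^*\chi}$ isomorphically onto the filtration defining $\widehat{\Z T}_{-\chi}$: indeed $g\in T$ satisfies $(\iota^*\chi)(g)=\chi(\iota(g))\ge k$ iff $\iota(g)$ lies in the $k$-th generating set for the $\chi$-filtration. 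Hence $\iota$ extends to a ring isomorphism $\widehat{\iota}\colon\widehat{\Z T}_{-\iota^*\chi}\isom\widehat{\Z T}_{-\chi}$, compatible with the $\Z T$-module structures via $\iota$. Third, the pair $(\iota,\widehat{\iota})$ is an isomorphism in the category of local systems (exactly as in the proof of Lemma \ref{prop:dsim}, citing \cite[III.8]{B}), so it induces an isomorphism $H_*(T,\widehat{\Z T}_{-\iota^*\chi})\isom H_*(T,\widehat{\Z T}_{-\chi})$. Comparing with definition \eqref{eq:defsigma} gives $\chi\in\Sigma^q(T,\Z)\Leftrightarrow\iota^*\chi\in\Sigma^q(T,\Z)$, which is the claim.

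\textbf{Main obstacle.} There is no serious obstacle; the only point requiring a little care is the bookkeeping of the filtration under $\iota$ — one must check that $\iota$ sends $F_k$ (for the $-\iota^*\chi$ filtration) onto $F_k$ (for the $-\chi$ filtration) rather than to some shift, and that the induced map on completions is a genuine ring map and not merely an abelian-group isomorphism. This is immediate from the definitions but is the one spot where a sign or direction error could creep in. Everything else is a verbatim repetition of the local-systems argument already used for characteristic varieties, so the proof will be short.
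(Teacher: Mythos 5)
Your proposal is correct and follows essentially the same route as the paper's proof: both establish functoriality of the Novikov--Sikorav completion under the conjugation automorphism $\iota_\gamma$ (checking that $\iota_\gamma$ sends the filtration defining $\widehat{\Z T}_{-\chi\circ\iota_\gamma}$ isomorphically onto that defining $\widehat{\Z T}_{-\chi}$), then pass to the induced isomorphism of local systems and hence of homology, and conclude via the definition of $\Sigma^q$. Your extra care about the filtration bookkeeping is exactly the one verification the paper silently performs in its "clearly preserves the defining filtrations" step.
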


\begin{proof}
Novikov-Sikorav completion is functorial, in the following sense. Let $\phi:G\to K$ be
a group homomorphism and $\chi\in \Hom(K, \RR)$. The induced ring homomorphism,
$\phi: \Z G\to \Z K$, clearly preserves the defining filtrations of 
$\widehat{\Z G}_{-\chi\circ \phi}$ and $\widehat{\Z K}_{-\chi}$. Passing to completions, 
$\phi$ extends to a ring homomorphism, 
$\widehat{\phi}: \widehat{\Z G}_{-\chi\circ \phi} \to \widehat{\Z K}_{-\chi}$.

The above remark may be applied to $\gamma$-conjugation, $\phi=\iota_{\gamma}: T\isom T$, 
for any $\gamma\in \G$, and an arbitrary additive character $\chi\in H^1(T, \RR)$. The
pair $(\phi, \widehat{\phi})$ gives then an isomorphism,
$(T, \widehat{\Z T}_{-\chi\circ \phi}) \isom (T, \widehat{\Z T}_{-\chi})$, in the 
category of local systems. Consequently, there is an induced isomorphism,
$H_*(T, \widehat{\Z T}_{-\chi\circ \phi}) \isom H_*(T, \widehat{\Z T}_{-\chi})$.
We infer from \eqref{eq:defsigma} that $\chi\in \Sigma^q(T, \Z)$ if and only if
$\chi\cdot d\in \Sigma^q(T, \Z)$, where $d=p(\gamma)$.
\end{proof}

\begin{remark}
\label{rem:anti}
Note that $-\id \in Sp_g(\Z)$ acts by $-\id$ on $\wedge^3 H/H$. Consequently, 
$-\Sigma (T_g)=\Sigma (T_g)$. This symmetry property of $\Sigma (G)$ about the origin 
does not hold in general.

Note also that, when $-\Sigma (G)=\Sigma (G)$, $\Sigma (G)\ne \emptyset$ if and only if
there is a finitely generated, normal subgroup $N$ of $G$, with infinite abelian quotient $G/N$.
Moreover, in this statement $G/N$ may be replaced by $\Z$. Indeed, assuming $G/N$ to be 
infinite abelian, we infer that $\Sigma (G)\ne \emptyset$, by resorting to \eqref{eq:ftest}.
Conversely, we know that the image of $\Sigma (G)$ in the quotient sphere, 
$(H^1(G, \RR)\setminus \{ 0\})/\RR_{+}$, is open and nonvoid. The density of 
rational points on this sphere \cite[p.451]{BNS} implies then that $\Sigma (G)$ contains
an epimorphism, $\nu:G \surj \Z$. By antipodal symmetry of $\Sigma (G)$ and 
\eqref{eq:ftest} again, $\ker (\nu)$ must be finitely generated.
\end{remark}

Under additional hypotheses, Lemma \ref{prop:rsim} may be used to obtain strong
information on finiteness properties.

\begin{prop}
\label{cor:bnsa}
Let $p:\G \surj D$ be a group epimorphism, with finitely generated kernel $T$.
Assume that $D$ is an arithmetic subgroup of a complex, simple linear algebraic group $S$,
defined over $\Q$, with $\Q-\rank (S)\ge 1$, and the canonical $D$-action on $T_{\abf}$ 
induced by conjugation extends to a non-trivial, irreducible rational representation
of $S$ in $T_{\abf}\otimes \C$. Suppose moreover that
\begin{enumerate}
\item \label{r1}
either $\Sigma(T)$ is a finite disjoint union of finite intersections of open half-spaces
in $H^1(T, \RR)$,
\item \label{r2}
or $\Sigma(T)$ is the complement of a finite union of linear subspaces in $H^1(T, \RR)$.
\end{enumerate}
Then the kernel $K$ of the natural map, $T\surj T_{\abf}$, is finitely generated if and
only if $\Sigma(T)\ne \emptyset$.
\end{prop}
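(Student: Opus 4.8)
The plan is to reduce the whole statement to a single dichotomy for $\Sigma(T)$ and then quote the standard finiteness criterion. First I would record the ``easy'' half, which needs no arithmetic input: since the canonical projection $\nu\colon T\surj T_{\abf}$ induces an isomorphism $\nu^*\colon H^1(T_{\abf},\RR)\isom H^1(T,\RR)$ (every real character of $T$ factors through $T_{\abf}$), the criterion \eqref{eq:ftest}, applied with $q=1$ and $\nu$ (legitimate, as $T$ is of type $FP_1$), says exactly that $K=\ker\nu$ is finitely generated if and only if $\Sigma(T)=H^1(T,\RR)\setminus\{0\}$. In particular, $K$ finitely generated forces $\Sigma(T)\ne\emptyset$; here $H^1(T,\RR)\ne 0$ because $T_{\abf}\otimes\C$ is a non-trivial $S$-module and a simple algebraic group has no non-trivial character, so in fact $b_1(T)\ge 2$. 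Thus the real content is the converse, which I would phrase as: \emph{under hypothesis \eqref{r1} or \eqref{r2}, a non-empty $D$-invariant $\Sigma(T)$ of the prescribed shape must equal $H^1(T,\RR)\setminus\{0\}$.}

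Write $V=H^1(T,\RR)$. Dualizing the hypothesis on $T_{\abf}$, the complexification $V\otimes_\RR\C\cong(T_{\abf}\otimes\C)^*$ is an irreducible rational $S$-module of $\C$-dimension at least $2$; hence $V$ is an irreducible real $D$-module with $\dim_\RR V\ge 2$, and the same holds over every finite-index subgroup of $D$, which is Zariski dense in $S$ by the Borel density theorem. The one mechanism used throughout is then: \emph{any linear subspace of $V$, or of $V\otimes\C$, invariant under a finite-index subgroup $D_1\le D$ is either $0$ or all of the space}. I would also recall here that $\Sigma(T)$ is open, conical, and $D$-invariant, the last by Lemma \ref{prop:rsim}.

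Now assume $\Sigma(T)\ne\emptyset$ and split into cases. In case \eqref{r2}, the set $V\setminus\Sigma(T)$ is a finite union of proper linear subspaces (proper because $\Sigma(T)\ne\emptyset$) and is $D$-invariant; $D$ permutes the maximal members, so the stabilizer $D_1$ of one of them is of finite index, its complexification is a proper $D_1$-invariant subspace, hence $0$ by the mechanism, and therefore $V\setminus\Sigma(T)\subseteq\{0\}$, i.e.\ $\Sigma(T)=V\setminus\{0\}$. In case \eqref{r1}, conicality lets me take the half-spaces linear, so the connected components $P_1,\dots,P_m$ of $\Sigma(T)$ are open polyhedral cones, permuted by $D$; I fix the finite-index stabilizer $D_1$ of $P_1$. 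The closed convex polyhedral cone $\overline{P_1}$ is $D_1$-invariant, full-dimensional, and $\ne V$ (else the open convex set $P_1$ would be dense, hence equal to $V$, contradicting $0\notin\Sigma(T)$); its lineality space $\overline{P_1}\cap(-\overline{P_1})$ is a proper $D_1$-invariant subspace, hence $0$, so $\overline{P_1}$ is pointed; a pointed full-dimensional closed convex polyhedral cone has finitely many extreme rays, spanning $V$, so at least two of them, and a finite-index subgroup $D_2\le D_1$ fixes each — giving a one-dimensional $D_2$-invariant line and contradicting the mechanism. Hence case \eqref{r1} is incompatible with $\Sigma(T)\ne\emptyset$, and the converse implication holds vacuously there. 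Combining the two directions with \eqref{eq:ftest} gives the biconditional.

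I expect the main obstacle to be the convex-geometry step in case \eqref{r1}: one must check that every reduction (to a single component, then to its closure, then to its lineality space, then to the lines spanned by its extreme rays) produces a $D_1$-invariant \emph{linear} object to which the irreducibility mechanism applies, and in particular that polyhedrality is what forces finitely many extreme rays — a Lorentz-type cone, which a non-compact simple group can preserve while acting irreducibly, is excluded precisely because it is not a finite intersection of half-spaces. The identification of the connected components with the given pieces, the reduction via \eqref{eq:ftest}, and case \eqref{r2} are routine.
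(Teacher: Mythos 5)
Your argument is correct and follows the same overall strategy as the paper: use \eqref{eq:ftest} to translate finite generation of $K$ into $\Sigma(T)=H^1(T,\RR)\setminus\{0\}$, then show that a proper non-empty $\Sigma(T)$ of either prescribed shape would yield a proper non-trivial linear subspace of $H^1(T,\RR)$ invariant under a finite-index subgroup of $D$, which contradicts irreducibility of $T_{\abf}\otimes\C$ combined with Borel density. Your treatment of case \eqref{r2} is essentially identical to the paper's. In case \eqref{r1} the two proofs diverge: the paper stabilizes one chamber $C_1$ of a hyperplane arrangement, passes to its finite set of supporting hyperplanes (a non-void, invariant collection of codimension-one subspaces, with a citation to Bourbaki for this convex-geometric fact), and lets a finite-index subgroup fix one; you instead take the closed polyhedral cone $\overline{P_1}$, kill its lineality space by the irreducibility mechanism to see it is pointed, and then let a finite-index subgroup fix one of its finitely many extreme rays (a dimension-one invariant subspace). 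The two routes are dual to each other -- supporting facets versus extreme rays -- and both hinge on polyhedrality exactly as your Lorentz-cone remark explains. Your version is slightly more self-contained (the pointedness reduction replaces the outside reference) at the cost of one extra step, and it also makes explicit a point the paper leaves implicit, namely that under hypothesis \eqref{r1} the outcome $\Sigma(T)=H^1(T,\RR)\setminus\{0\}$ is impossible when $b_1(T)\ge 2$, so the forward implication is vacuous there.
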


\begin{proof}
According to \eqref{eq:ftest}, finite generation of $K$ is equivalent to 
$\Sigma(T)= H^1(T, \RR)\setminus \{0\}$. Note that our irreducibility assumptions imply
that $b_1(T)>1$. We claim that if $\Sigma(T)$ is a proper, non-void subset of 
$H^1(T, \RR)\setminus \{0\}$, then there is a proper, non-trivial linear subspace
$E\subseteq H^1(T, \RR)$ invariant under the canonical action of a finite index subgroup
$D_0\subseteq D$. Granting the claim, we may use the fact that $D_0$ is Zariski dense
in $S$ to infer that $E\otimes \C\subseteq (T_{\abf}\otimes \C)^*$ is $S$-invariant,
a contradiction. Thus, we only need to verify the above claim, in order to finish the proof.

\eqref{r1} In this case, we know that $\Sigma(T)= \cup_{i=1}^r C_i$, where each $C_i$ is 
a chamber of a non-void, finite hyperplane arrangement $\A_i$ in $H^1(T, \RR)$,
and the union is disjoint. By Lemma \ref{prop:rsim}, there is a finite index subgroup
$D_1\subseteq D$ such that $C_1\cdot d=C_1$, for any $d\in D_1$. Consider the supporting
hyperplanes of $C_1$, that is, the set $\SS_1$ consisting of those hyperplanes 
$E\subseteq H^1(T, \RR)$ with the property that the intersection of $E$ with the boundary
of $C_1$ has non-void interior in $E$. Standard arguments show that $\SS_1$ is a non-void
subset of $\A_1$; see \cite[Chapter V.1]{Bo}. Clearly, $\SS_1$ is $D_1$-invariant, so 
we may choose $D_0$ to be the isotropy group of an element $E\in \SS_1$.

\eqref{r2} In the second case, the complement of $\Sigma(T)$ is the union of a non-void, 
finite arrangement $\A$ of non-trivial, proper linear subspaces of $H^1(T, \RR)$. Again
by Lemma \ref{prop:rsim}, $\A$ is $D$-invariant. Clearly, the isotropy group $D_0$
of $E\in \A$ satisfies the desired conditions.
\end{proof}

Property \eqref{r1} above is verified by $3$-manifold groups (that is, fundamental
groups of compact, connected, differentiable $3$-manifolds), according to \cite{BNS}.
Property \eqref{r2} holds for Kahler groups, as shown by Delzant in \cite{D}. For
Torelli groups, Proposition \ref{cor:bnsa}\eqref{r2} may be improved 
to obtain Theorem \ref{thm:kintro}.

\begin{corollary}
\label{cor:ftor}
If the Torelli group $T_g$ ($g\ge 4$) is a Kahler group,
then the Johnson kernel $K_g$ is finitely generated.
\end{corollary}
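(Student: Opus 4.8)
The plan is to derive Corollary \ref{cor:ftor} from Proposition \ref{cor:bnsa}\eqref{r2}, using the extension \eqref{eq:deft} together with our computation of resonance for Torelli groups. First I would check that the hypotheses of Proposition \ref{cor:bnsa} are satisfied with $\G=\G_g$, $D=Sp_g(\Z)$, $S=Sp_g(\C)$ and $T=T_g$, for $g\ge 4$. The extension \eqref{eq:deft} provides an epimorphism $p:\G_g\surj Sp_g(\Z)$ with finitely generated kernel $T_g$ (finite generation by Johnson \cite{J1}, since $g\ge 3$). The group $Sp_g(\Z)$ is arithmetic in $Sp_g(\C)$, which is $\C$-linear, defined over $\Q$, simple, with $\Q$-rank $g\ge 1$. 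By Johnson's Theorem \ref{thm:john}, the canonical conjugation action of $Sp_g(\Z)$ on $(T_g)_{\abf}$ is the restriction of the irreducible rational $Sp_g(\C)$-representation $L(\C)=\bigwedge^3 H(\C)/H(\C)=V(\lambda_3)$, which is non-trivial. So the standing hypotheses of Proposition \ref{cor:bnsa} hold.

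Next I would invoke Delzant's theorem \cite{D}: if $T_g$ is a Kahler group, then its first BNSR invariant $\Sigma(T_g)$ is the complement, in $H^1(T_g,\RR)$, of a finite union of linear subspaces. This is precisely condition \eqref{r2} of Proposition \ref{cor:bnsa}. Hence Proposition \ref{cor:bnsa} applies and tells us that the Johnson kernel $K_g=\ker\bigl(T_g\surj (T_g)_{\abf}\bigr)$ is finitely generated if and only if $\Sigma(T_g)\ne\emptyset$. (Recall that $K_g$ was identified by Johnson \cite{J2} with this kernel, equivalently with $\ker(J)$.)

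It remains to prove that $\Sigma(T_g)\ne\emptyset$, for $g\ge 4$, under the Kahler hypothesis. Here I would combine Remark \ref{rem:anti} with our resonance computation. By Remark \ref{rem:anti}, $-\id\in Sp_g(\Z)$ acts by $-\id$ on $(T_g)_{\abf}$ and hence on $H^1(T_g,\RR)$, so $-\Sigma(T_g)=\Sigma(T_g)$; and for a group with this antipodal symmetry, $\Sigma(T_g)\ne\emptyset$ is equivalent to the existence of a finitely generated, normal subgroup $N\lhd T_g$ with $T_g/N$ infinite. Now I would use the fact that Torelli groups are Kahler (by hypothesis), hence $1$-formal by \cite{DGMS}, together with Theorem \ref{thm:rest}, which gives $\R^1_1(T_g)=\R(T_g)=\{0\}$ for $g\ge 4$. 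A $1$-formal Kahler group with trivial (degree-one, depth-one) resonance and $b_1>1$ cannot be ``fibered'' in a pathological way; more precisely one uses that for a Kahler group $G$ the positive-dimensional components of $\V^1_1(G)$ are subtori passing through the origin (by Beauville--Arapura--Delzant-type results, or directly via Corollary \ref{cor:qkchar}), so by Lemma \ref{lem:tgcone} triviality of resonance forces $\Sigma(T_g)=H^1(T_g,\RR)\setminus\{0\}$, which is certainly non-empty since $b_1(T_g)\ge 14>0$. I expect this last step — pinning down why trivial resonance plus the Kahler/$1$-formal hypothesis forces $\Sigma(T_g)$ to be non-empty (indeed all of $H^1\setminus\{0\}$) — to be the main obstacle, since it requires carefully matching Delzant's linear-subspace description against the arithmetic-symmetry obstruction, exactly as in the proof of Proposition \ref{cor:bnsa}\eqref{r2}; but once that is in place the conclusion that $K_g$ is finitely generated is immediate. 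See Remark \ref{rem:farb} for the detailed strategy.
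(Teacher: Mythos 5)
Your opening steps are exactly the paper's: you correctly reduce the claim, via Proposition~\ref{cor:bnsa}\eqref{r2} (whose hypotheses you verify from Johnson's Theorem~\ref{thm:john} and Delzant's result \cite{D}), to showing that $\Sigma(T_g)\neq\emptyset$ under the Kahler hypothesis. The genuine gap is in how you then try to establish $\Sigma(T_g)\neq\emptyset$.

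Your proposed inference --- that a $1$-formal Kahler group with $\R^1_1=\{0\}$ and $b_1>1$ must have $\Sigma = H^1\setminus\{0\}$, or at least $\Sigma\neq\emptyset$, via Corollary~\ref{cor:qkchar} and Lemma~\ref{lem:tgcone} --- is false, and the paper's own Example~\ref{rem:ell} is a counterexample: there one has a Kahler group $G$ with $\R^1_1(G)=0$ and yet $\Sigma(G)=\emptyset$, precisely because of an elliptic pencil with multiple fibers. Finiteness of $\V^1_1\cap\T^0$ and triviality of the tangent cone do not see torsion-translated components, nor do they control $\Sigma$ in the direction you need; Remark~\ref{rem:farb} explicitly flags that this case must be handled separately.

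The missing ingredient is the \emph{geometric} form of Delzant's theorem, not just its conclusion that $\Sigma(G)^c$ is a finite union of linear subspaces: the subspaces are $f_\alpha^* H^1(C_\alpha,\RR)$, with $f_\alpha\colon M\to C_\alpha$ a connected-fiber holomorphic pencil onto a smooth compact curve with $\chi(C_\alpha)\le 0$, and $\dim_\RR f_\alpha^* H^1(C_\alpha,\RR)=b_1(C_\alpha)$. If every $b_1(C_\alpha)<b_1(T_g)$, the complement of $\Sigma(T_g)$ is a finite union of proper subspaces, so $\Sigma(T_g)\neq\emptyset$ and you are done by Proposition~\ref{cor:bnsa}\eqref{r2}. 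Otherwise some $b_1(C_\alpha)\ge b_1(T_g)>2$, forcing $\chi(C_\alpha)<0$, i.e.\ genus $\ge 2$; then $\R^1_1(C_\alpha)=H^1(C_\alpha,\C)$ and $f_\alpha^*H^1(C_\alpha,\RR)\subseteq\R^1_1(T_g)\cap H^1(T_g,\RR)$, contradicting Theorem~\ref{thm:rest}. Note that the dichotomy on $b_1(C_\alpha)$ (and hence the role of $b_1(T_g)>2$) is exactly what rules out the problematic elliptic-pencil scenario; without it, your argument collapses at the point you yourself identified as the main obstacle.

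Two smaller comments: the detour through Remark~\ref{rem:anti} (antipodal symmetry and reformulation of $\Sigma\neq\emptyset$ via finitely generated normal subgroups) is correct but not used anywhere in your argument; and once $\Sigma(T_g)\neq\emptyset$ is established, one only needs the ``if'' direction of Proposition~\ref{cor:bnsa}\eqref{r2}, not the full equivalence.
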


\begin{proof}
We have to show that $\Sigma(T_g)\ne \emptyset$, for $g\ge 4$, assuming the Kahler property.
According to \cite{D}, the complement of $\Sigma(T_g)$ is a finite union,
\[
\bigcup_{\alpha} f^*_{\alpha} H^1(C_{\alpha}, \RR)\, ,
\]
coming from irrational pencils on the compact Kahler manifold $M$, if $T_g=\pi_1(M)$.
More precisely, each $f_{\alpha}:M \to C_{\alpha}$ is a holomorphic map onto a smooth 
compact curve with $\chi (C_{\alpha})\le 0$, having connected fibers, and
$\dim_{\RR}  f^*_{\alpha} H^1(C_{\alpha}, \RR)= b_1(C_{\alpha})$. If
$b_1(C_{\alpha})<b_1(T_g)$, for every $\alpha$, then we are done. Otherwise, 
$\chi (C_{\alpha})< 0$, for some $\alpha$, since $b_1(T_g)>2$. We infer from definition
\eqref{eq:defres} that $\R^1_1(C_{\alpha})=H^1(C_{\alpha}, \C)$ and 
$f^*_{\alpha} H^1(C_{\alpha}, \RR) \subseteq \R^1_1(T_g)\cap H^1(T_g, \RR)$, 
which contradicts Theorem \ref{thm:rest}.
\end{proof}

\begin{example}
\label{rem:ell}
We point out that there exist Kahler groups with $\R^1_1(G)=0$ and $\Sigma(G)= \emptyset$.
In particular, \eqref{eq:ftest} implies that in this situation the kernel of 
the canonical projection, $G\surj G_{\abf}$, is not finitely generated. 
At the same time, the condition on $\R^1_1(G)$ implies that there is no group epimorphism,
$G\surj \pi_1(\Sigma_h)$, when the genus $h$ is at least $2$, by the argument in the proof 
of Corollary \ref{cor:ftor}. We give such an
example, inspired by a construction of Beauville \cite[Example 1.8]{Be}.

Let $g$ be a fixed-point free involution of a $1$-connected compact Kahler manifold $E$.
The existence of such an object follows for instance from Serre's result \cite{Se}, 
which guarantees the realizability of finite groups as fundamental groups of smooth, 
projective complex varieties. Let $\Z_2$ act on the Fermat curve 
$F:= \{ x^4+y^4+z^4=0\}\subseteq \C \PP^2$ by $g(x:y:z)=(y:x:z)$. Set 
$M:= F\times E/\Z_2$, where the quotient is taken with respect to the diagonal action,
and $C:=F/\Z_2$. Note that $M$ is a compact Kahler manifold, and $C$ is an elliptic curve.
Set $G:= \pi_1(M)$.

The first projection induces a holomorphic surjection, $f:M\to C$, having connected fibers,
and $4$ multiple fibers (of multiplicity $2$). By Delzant \cite{D}, the subspace $f^*H^1(C, \RR)$
is contained in the complement of $\Sigma(G)$. Since $f^{\bullet}:H^{\bullet}C \to H^{\bullet}M$
may be identified with the inclusion of fixed points, 
$(H^{\bullet}F)^{\Z_2} \hookrightarrow (H^{\bullet}F \otimes H^{\bullet}E)^{\Z_2}$, and
$b_1(E)=0$, we infer that $f$ induces in cohomology an isomorphism in degree $1$ and a
monomorphism in degree $2$. It follows that $\Sigma(G)= \emptyset$ and $\R^1_1(G)=0$,
as asserted.

By a similar construction, we may exhibit examples of Kahler groups with arbitrary (non-zero)
even first Betti number, having the property that $\R^1_1(G)=0$ and 
$\Sigma(G) \ne H^1(G, \RR)\setminus \{ 0\}$. As before, these conditions imply that the kernel
of the canonical projection, $G\surj G_{\abf}$, is not finitely generated, and there is no group
epimorphism, $G\surj \pi_1(\Sigma_h)$, for $h\ge 2$. 
\end{example}

\begin{remark}
\label{rem:farb}
In the proof of Corollary \ref{cor:ftor}, our strategy involves two steps. Firstly, group surjections
$T_g\surj \pi_1(\Sigma_h)$ with $h\ge 2$ are ruled out with the aid of our result on resonance, from
Theorem \ref{thm:rest}. Secondly, we use the symplectic symmetry of Sigma-invariants to prove 
non-existence of group surjections onto orbifold fundamental groups in genus $1$ (or, elliptic pencils with
multiple fibers, in the geometric language from \cite{D}), $T_g\surj \pi_1^{\orb}(\Sigma_1)$.
Example \ref{rem:ell} shows that the second step is needed for the proof of Corollary \ref{cor:ftor}.
\end{remark}

\begin{ack}
We are grateful to Alex Suciu, for useful discussions at an early stage of this work.
Thanks are also due to the referee, whose suggestions helped us to improve the exposition.
\end{ack}

%\quad\newline\vspace{-1.1in}
\newcommand{\arxiv}[1]
{\texttt{\href{http://arxiv.org/abs/#1}{arXiv:#1}}}

\bibliographystyle{amsplain}

\end{document}